\documentclass[11pt]{amsart}
\usepackage[colorlinks=true, pdfstartview=FitV, linkcolor=blue, 
citecolor=blue]{hyperref}

\usepackage{amsmath,amssymb,bbm,amscd}
\usepackage{graphicx}
\usepackage{a4wide}

\theoremstyle{plain}
\newtheorem{theorem}{Theorem}[section]
\newtheorem{prop}[theorem]{Proposition}
\newtheorem{lemma}[theorem]{Lemma}
\newtheorem{coro}[theorem]{Corollary}
\newtheorem{fact}[theorem]{Fact}

\theoremstyle{definition}
\newtheorem{example}[theorem]{Example}
\newtheorem{remark}[theorem]{Remark}
\newtheorem{definition}[theorem]{Definition}

\numberwithin{equation}{section}

\newcommand{\ii}{\ts\mathrm{i}}
\newcommand{\ee}{\,\mathrm{e}}
\newcommand{\ts}{\hspace{0.5pt}}
\newcommand{\nts}{\hspace{-0.5pt}}

\DeclareMathOperator{\dens}{\mathrm{dens}}
\DeclareMathOperator{\card}{\mathrm{card}}
\DeclareMathOperator{\cent}{\mathrm{cent}}
\DeclareMathOperator{\supp}{\mathrm{supp}}
\DeclareMathOperator{\stab}{\mathrm{stab}}
\DeclareMathOperator{\norm}{\mathrm{norm}}
\DeclareMathOperator{\vol}{\mathrm{vol}}
\DeclareMathOperator{\GL}{\mathrm{GL}}
\DeclareMathOperator{\No}{\mathrm{N}}
\DeclareMathOperator{\imag}{\mathrm{Im}}

\newcommand{\vG}{\varGamma}

\newcommand{\vT}{\varTheta}

\newcommand{\fb}{\mathfrak{b}}

\newcommand{\cA}{\mathcal{A}}
\newcommand{\cB}{\mathcal{B}}

\newcommand{\cG}{\mathcal{G}}
\newcommand{\cH}{\mathcal{H}}
\newcommand{\cL}{\mathcal{L}}
\newcommand{\cP}{\mathcal{P}}
\newcommand{\cR}{\mathcal{R}}
\newcommand{\cS}{\mathcal{S}}

\newcommand{\cO}{\mathcal{O}}

\newcommand{\AAA}{\mathbb{A}}
\newcommand{\KK}{\mathbb{K}}
\newcommand{\ZZ}{\mathbb{Z}\ts}
\newcommand{\RR}{\mathbb{R}\ts}
\newcommand{\CC}{\mathbb{C}\ts}
\newcommand{\NN}{\mathbb{N}}
\newcommand{\QQ}{\mathbb{Q}}

\newcommand{\XX}{\mathbb{X}}
\newcommand{\YY}{\mathbb{Y}}

\newcommand{\one}{\mathbbm{1}}
\newcommand{\Aut}{\mathrm{Aut}}

\newcommand{\defeq}{\mathrel{\mathop:}=}

\newcommand{\exend}{\hfill $\Diamond$}

\newcommand{\myfrac}[2]{\frac{\raisebox{-2pt}{$#1$}}
      {\raisebox{0.5pt}{$#2$}}}

\begin{document}

\title[Shifts with small centraliser and large
  normaliser]{Number-theoretic positive entropy shifts\\[2mm]
  with small centraliser and large normaliser}

\author{M.~Baake}

\author{\'{A}.~Bustos}

\author{C.~Huck}

\address{Fakult\"{a}t f\"{u}r Mathematik,
  Universit\"{a}t Bielefeld,\newline \hspace*{\parindent}Postfach
  100131, 33501 Bielefeld, Germany}
\email{$\{$mbaake,abustos,huck$\}$@math.uni-bielefeld.de}

\author{M.~Lema\'{n}czyk}
\address{Faculty of Mathematics and Computer Science,
  Nicolaus Copernicus University,\newline
  \hspace*{\parindent}12/18 Chopin street,
87-100 Toru\'n, Poland}
\email{mlem@mat.umk.pl}

\author{A.~Nickel}

\address{Fakult\"{a}t f\"{u}r Mathematik,
  Universit\"{a}t Duisburg-Essen,
  \newline \hspace*{\parindent}Thea-Leymann-Str.~9,
  45127 Essen, Germany}
\email{andreas.nickel@uni-due.de}

\begin{abstract}
  Higher-dimensional binary shifts of number-theoretic origin with
  positive topological entropy are considered. We are particularly
  interested in analysing their symmetries and extended symmetries.
  They form groups, known as the topological centraliser and
  normaliser of the shift dynamical system, which are natural
  topological invariants. Here, our focus is on shift spaces with
  trivial centralisers, but large normalisers. In particular, we
  discuss several systems where the normaliser is an infinite
  extension of the centraliser, including the visible lattice points
  and the $k$-free integers in some real quadratic number fields.
\end{abstract}

\maketitle
\thispagestyle{empty}

\section{Introduction}\label{sec:intro}

Shift spaces under the action of $\ZZ^d$ form a much-studied class of
dynamical systems, both for $d=1$, compare \cite{LM}, and for
$d\geqslant 2$. In the latter case, much less is known in terms of
general classifications, and even subclasses such as those of
algebraic origin \cite{Klaus} are still rather enigmatic, despite
displaying fascinating facets that have been analysed intensely.  In
particular, one is looking for interesting topological invariants to
help analyse the jungle, and quite a bit of progress in this direction
has been made recently.

Among the available tools are the automorphism group of a shift space
and its various siblings and generalisations; see
\cite{CK,CQY,Don,DKKL,Baa,CP} and references therein. Here, we adopt
the point of view of \cite{BRY,Baa} to analyse both the (topological)
centraliser (denoted by $\cS$ below) \emph{and} the normaliser of the
shift space, the latter denoted by $\cR$, as this pair can be quite
revealing as soon as $d\geqslant 2$. In fact, both the topological
setting and the extension to higher dimensions go beyond some of the
initial studies \cite{Good,KLP} that specifically looked at
reversibility in the measure-theoretic setting for $d=1$; see
\cite{RQ,FS,Baa} and references therein for more on the early
reversibility results. Further, the groups $\cS$ and $\cR$ are often
explicitly accessible, both for systems of low complexity, where $\cS$
is often minimal due to some form of topological rigidity, and beyond,
where other rigidity mechanisms of a more algebraic nature emerge.

Below, we consider binary shift spaces of number-theoretic origin, as
motivated by recent progress on $\cB$-free systems and weak model
sets; see \cite{Abda,DKKL,KKL,BHS} and references therein.  By way of
characteristic examples with pure point spectrum, we demonstrate that
positive topological entropy may very well be compatible with small or
trivial centralisers, which means that $\cS$ agrees with the
underlying lattice (meaning a co-compact discrete subgroup of $\RR^d$)
or a finite{\ts}-index extension thereof, but also that such systems
may have considerably larger normalisers, which is particularly
interesting for $d\geqslant 2$. In fact, as shown in
\cite{BRY,Bustos}, it is the group $\cR$ that captures some obvious
symmetries, as visible from the chair tiling and related shift spaces
with their pertinent geometric symmetries.  Also, the computability 
of $\cS$ and $\cR$ in these cases can be an advantage over some 
of the more general, abstract (semi-)groups that are presently 
attracting renewed attention.  \smallskip

The paper is organised as follows. After the introduction of some
concepts and notions in Section~\ref{sec:prelim}, we set the scene
with the well-known example of the visible lattice points of $\ZZ^2$
in Section~\ref{sec:motiv}, leading to Proposition~\ref{prop:visible}
and Corollary~\ref{coro:vis-symm}, which in particular show that one
has $\cR = \ZZ^2 \rtimes \GL (2,\ZZ)$. Then, Section~\ref{sec:lattice}
states and proves this for $\ZZ^d$ with $d\geqslant 2$
(Theorem~\ref{thm:visible}) and introduces the general framework of
lattice{\ts}-based shift spaces, which can often be characterised by a
rather powerful admissibility condition for its elements
(Proposition~\ref{prop:locator}). Then, under some mild assumptions,
the normalisers are always maximal extensions of the corresponding
centralisers (Theorem~\ref{thm:max-extend}), with elements that are
affine mappings (Corollary~\ref{coro:affine}).

Section~\ref{sec:NT} explains the general number-theoretic setting of
an algebraic $\cB$-free system in higher dimensions, based on the
classic Minkowski embedding of (commutative) maximal orders and 
their ideals as lattices in $\RR^d$ for a suitable $d$. Here,
Theorem~\ref{thm:alg-symm} states the results on the triviality of
$\cS$ and the direct product nature of $\cR$, which are true under a
coprimality condition of the ideals chosen for $\cB$ and a mild
convergence condition, together known as the Erd\H{o}s property, in
generalisation of the one-dimensional notion \cite{DKKL} from
$\cB$-free integers.

Sections~\ref{sec:Gaussian} and \ref{sec:real} then cover some
paradigmatic examples from quadratic number fields.  In the complex
case, we treat the shift spaces generated by the $k$-free Gaussian or
the $k$-free Eisenstein integers (Theorems~\ref{thm:Gauss} and
\ref{thm:Eisenstein}). In both cases, $\cR$ is the extension of
$\cS\simeq \ZZ^2$ by a maximal finite subgroup of $\GL (2,\ZZ)$, which
is substantially different from the case of the visible lattice
points. Finally, in the real case, we consider $k$-free integers in
the maximal order of $\QQ (\sqrt{m}\,)$ for $m\in \{2,3,5\}$. Here,
Theorem~\ref{thm:real} states that $\cR$ is the semi-direct product of
$\cS\simeq \ZZ^2$ with a non-trivial \emph{infinite} subgroup of
$\GL (2,\ZZ)$, which can be given a clear interpretation in terms of
algebraic number theory. The latter case, which is the first example
of this type to the best of our knowledge, is intermediate between
known examples from inflation tilings and shifts such as that generated
by the visible lattice points. Thus, it looks particularly
promising for future work and extensions to general number fields.

\section{Preliminaries}\label{sec:prelim}

Let $\vG \subset \RR^{d}$ be a \emph{lattice} in $d$-space, that is, a
discrete and co-compact subgroup of $\RR^d$.  Below, we will be
working with the full shift (or configuration) space $\{0,1\}^{\vG}$,
equipped with the standard product topology, and certain of its closed
subspaces (called subshifts or simply shifts). We will generally use
$\XX$ to denote such a shift space, refering to either the full shift
or the subshift under consideration.  When the situation is
independent of the geometry of the lattice, we will choose
$\vG = \ZZ^{d}$ for simplicity. Any element $x\in\XX$ can also be
viewed as a subset of $\vG$, by taking the support of $x$, that is, by
mapping $x$ to
\[
  U^{}_{x} \, \defeq \, \supp (x) \, = \,
  \{ {n} \in \vG : x^{}_{{n}} = 1 \} \, \subseteq \, \vG .
\]
Conversely, any point set $U \subseteq \vG$ can be viewed as a
configuration, by mapping it to $x^{}_{U} = 1^{}_{U}$, that is, to its
characteristic function. As usual, $\XX$ admits a continuous action of
$\vG$ on it, defined by
$T \! : \, \vG \nts\nts \times \nts\nts \XX \xrightarrow{\quad} \XX$,
where $T (t, x) = T^{}_{t} \ts (x)$ with
$ \bigl( T^{}_{t}\ts (x)\bigr)_{{n}} \defeq x^{}_{{n} + t}$.  When
working with $\ZZ^d$, we shall usually refer to its standard basis as
$\{ {e}^{}_{1}, \ldots , {e}^{}_{d} \}$, and align this with the
elementary shift action of the $d$ commuting shift operators
$T^{}_{e_i}$. For the action of $\ZZ^d$ in this case, with
$t = (t^{}_{1}, \ldots , t^{}_{d})$, this simply means
$T^{}_{t} (x) = T^{\ts t_1}_{e_1} \cdots T^{\ts t_d}_{e_d} (x)$ for
all $x\in \{ 0,1 \}^{\ZZ^d}$.

Likewise, there is an action of $\vG$ on its subsets defined by
$\alpha^{}_{t} (U) = t + U \defeq \{ t + u : u \in U \}$.  It is easy
to check that $U_{T^{}_{t} (x)} = \alpha^{}_{-t} (U^{}_{x})$.  If we
denote the power set of $\vG$ by $\Omega$, we thus get the commutative
diagram
\begin{equation}\label{eq:cd-1}
\begin{CD}
  \XX @> T^{}_{t} >> \XX \\
  @V \gamma VV     @VV \gamma V \\
  \Omega @> \alpha^{}_{-t} >> \Omega
\end{CD}
\end{equation}
where $\gamma$ is the mapping defined by $x \mapsto U_{x}$. This is a
homeomorphism if we equip $\Omega$ with the \emph{local topology},
where two subsets of $\vG$ are $\varepsilon$-close to one another when
they agree on the ball of radius $1/\varepsilon$ around $0$.
Consequently, by slight abuse of notation, we will not distinguish
these two points of view whenever the context is clear. This means
that we will consider a subset $U \subseteq \vG$ simultaneously as a
configuration, and vice versa.

In this spirit, we can also consider the group of lattice automorphisms,
$\Aut (\vG) \simeq \GL (d, \ZZ)$. Indeed, if $\Aut (\XX)$ denotes the group 
of homeomorphisms of $\XX$, any $M\in \Aut (\vG)$ induces an element
\mbox{$h^{}_{\nts M} \in \Aut (\XX)$}, where
\begin{equation}\label{eq:hm-def}
  \bigl(h^{}_{\nts M} (x)\bigr)_{{n}}
  \, \defeq \, x^{}_{\nts M^{-1} {n}} \ts .
\end{equation}
In fact, the mapping $M \mapsto h^{}_{\nts M}$ defines an injective
group homomorphism. Here, one can check that
$U^{}_{h^{}_{\nts M} (x)} = M U^{}_{x}$, so the counterpart to
\eqref{eq:cd-1} is the commutative diagram
\begin{equation}\label{eq:cd-2}
\begin{CD}
  \XX @> h^{}_{\nts M} >> \XX \\
  @V \gamma VV     @VV \gamma V \\
  \Omega @> M  >> \Omega
\end{CD}
\end{equation}
which makes calculations with elements of the form $h^{}_{\nts M}$
more convenient in the formulation with subsets.  From now on, we
identify $\XX$ and $\Omega$, and use the symbol $\XX$ for both. To
ease the understanding, we will normally use $x$, $y$ for
configurations and $U$, $V$ for sets.

A \emph{point set} $S \subset \RR^d$, by which we mean an at most
countable union of singleton sets, is said to have \emph{natural
  density} if
\[
   \dens (S) \, = \lim_{r\to\infty}
   \frac{\card ( S\cap B_r )}{\vol (B_r)}
\]
exists, where $B_r $ denotes the closed ball of radius $r$ around
$0$. One can use other sets for averaging, as long as they are centred
around $0$ and satisfy some condition of F{\o}lner or van Hove type;
see \cite{TAO,BMP} for details.

Below, we shall need the following simple result on sublattices of a
given lattice, where the term \emph{sublattice} is meant to include
the property that the corresponding index is finite.

\begin{fact}\label{fact:cosets}
  Let\/ $\vG$ be a lattice in\/ $\RR^d$, and let\/ $\vG_1$ and $\vG_2$
  be sublattices of\/ $\vG$, with corresponding indices\/ $m^{}_{1}$
  and\/ $m^{}_{2}$, respectively. Then, $\vG_1 \cap \vG_2$ and\/
  $\vG_1 \nts + \nts \vG_2$ are sublattices of\/ $\vG$ as well.
   
  Further, if\/ the indices\/ $m^{}_{1}$ and $m^{}_{2}$ are coprime,
  one has\/ $\vG_1 + \vG_2 = \vG$, which implies that\/ $\vG_1$ meets
  all cosets of\/ $\vG_2$ and vice versa.
\end{fact}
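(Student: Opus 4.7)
\smallskip

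\noindent\textbf{Proof sketch.} The plan is to reduce the statement to elementary subgroup and index arithmetic, since $\vG_1$ and $\vG_2$ are subgroups of $\vG$ of finite indices $m^{}_{1}$ and $m^{}_{2}$, respectively. Both $\vG_1 \cap \vG_2$ and $\vG_1 + \vG_2$ are subgroups of $\vG$ by the usual closure properties, and each inherits discreteness from $\vG$; hence the only point to verify is that their indices in $\vG$ are finite.

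For $\vG_1 + \vG_2$, the chain $\vG_1 \subseteq \vG_1 + \vG_2 \subseteq \vG$ gives that $[\vG : \vG_1 + \vG_2]$ divides $m^{}_{1}$, and, by symmetry, $m^{}_{2}$ as well, so it is finite. For $\vG_1 \cap \vG_2$, I would invoke the diagonal homomorphism $\vG \to (\vG/\vG_1) \times (\vG/\vG_2)$ sending $g \mapsto (g + \vG_1, g + \vG_2)$, whose kernel equals $\vG_1 \cap \vG_2$. The induced map on cosets is injective, so $[\vG : \vG_1 \cap \vG_2]$ divides $m^{}_{1} m^{}_{2}$ and is again finite, which establishes the first assertion.

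For the second assertion, combining the two divisibility relations above shows that $[\vG : \vG_1 + \vG_2]$ divides $\gcd (m^{}_{1}, m^{}_{2})$; under the coprimality hypothesis this index equals $1$, forcing $\vG_1 + \vG_2 = \vG$. The coset statement is then an immediate consequence: for any $v \in \vG$, write $v = g^{}_{1} + g^{}_{2}$ with $g^{}_{j} \in \vG_j$, so that $g^{}_{1} = v - g^{}_{2}$ lies in $\vG_1 \cap (v + \vG_2)$; the reverse assertion follows by swapping the roles of $\vG_1$ and $\vG_2$. There is no real obstacle in this argument; the only subtlety worth flagging is that the paper's notion of \emph{sublattice} requires finiteness of the index, so both divisibility bounds from the first step are genuinely needed.
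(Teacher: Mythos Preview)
Your proof is correct and follows essentially the same route as the paper: both reduce to elementary index arithmetic in the abelian group $\vG$, deriving that $[\vG:\vG_1+\vG_2]$ divides each $m_i$ via multiplicativity of indices in the tower $\vG_i \subseteq \vG_1+\vG_2 \subseteq \vG$, and then concluding $\vG_1+\vG_2=\vG$ from coprimality. The only cosmetic difference is in bounding $[\vG:\vG_1\cap\vG_2]$: you use the diagonal embedding into $(\vG/\vG_1)\times(\vG/\vG_2)$, whereas the paper observes $m_i\vG\subseteq\vG_i$ and hence $m_1 m_2\,\vG\subseteq\vG_1\cap\vG_2$; both yield the same bound $m_1 m_2$ and are equally standard.
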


\begin{proof}
If $[\vG : \vG_i] = m_i$, one has $m_i \vG \subseteq \vG_i$ by standard
arguments, which implies 
\[
   m^{}_{1} m^{}_{2} \vG \, \subseteq \, \vG_1 \cap \vG_2
   \, \subseteq \, \vG_1 + \vG_2 \, \subseteq \, \vG \ts .
\]
The sublattice property for $\vG_1 \cap \vG_2$ and
$\vG_1 + \vG_2$ is then clear.
   
The next statement is a consequence of what is sometimes
referred to as the diamond isomorphism theorem, but can also be
seen directly as follows. Set $n=[\vG : (\vG_1 \! + \! \vG_2) ]$ and 
$n_i = [ (\vG_1 \! + \!  \vG_2) : \vG_i]$. Then, for $i\in \{ 1,2\}$, 
\[
    m_i \, = \, [\vG : \vG_i ] \, = \,
    [\vG : (\vG_1 \! + \! \vG_2) ] \ts
    [ ( \vG_1  \! + \! \vG_2)  : \vG_i ]
    \, = \, n \ts\ts n_i \ts ,
\]
which implies $n | \gcd (m^{}_{1}, m^{}_{2}) = 1$ and thus
$\vG_1+\vG_2 = \vG$. The final implication for the cosets is a 
now simple consequence.
\end{proof}

An important concept for shift spaces is that of a block map; see
\cite{LM} for background.  When working with subshifts
$\XX \subseteq \cA^{\ts\ZZ^d}$ and $\YY \subseteq \cB^{\ts\ZZ^d}$ over
finite alphabets $\cA$ and $\cB$, a continuous mapping
$h \! : \, \XX \xrightarrow{\quad} \YY$ is called a \emph{block map}
if there is a non-negative integer $\ell$ such that, for every
$x\in\XX$ and all $n\in\ZZ^d$, the image $y=h(x)$ at position $n$ is
fully determined from the patch
$\big\{ x^{}_{n+m} : m \in [-\ell,\ell]^d \big\}$, that is, from the
knowledge of $x$ within a $d$-cube of sidelength $2\ell$ centred at
$n$. In other words, the action of $h$ can be seen as the result of a
sliding block code $\phi = \phi^{}_{h}$ that, for some fixed
$\ell\in\NN_0$, maps a cubic block of $(2 \ell+1)^d$ symbols from
$\cA$ to a single letter from $\cB$, positioned at the centre of the
block (which can easily be modified when needed). This is the symbolic
version of a \emph{local derivation rule} from discrete geometry
\mbox{\cite[Sec.~5.2]{TAO}}.  An important result that we shall need
repeatedly is the Curtis--Hedlund--Lyndon (CHL) theorem: If a
continuous mapping $h\! : \, \XX \xrightarrow{\quad} \YY$ intertwines
the shift action on $\XX$ and $\YY$, it must be a block map based on
some code $\phi$ of the above type \cite[Thm.~6.2.9]{LM}.

Below, we shall only be interested in subshifts on which the action of
$\vG$ is \emph{faithful}, which means that the subshift contains
non-periodic elements.  In this context, it is also natural to
consider the affine lattice group $\vG \rtimes \Aut (\vG)$, whose
elements $(t,M)$ act on $\RR^d$ via $(t, M) (y) \defeq M y + t$, and
correspondingly on $\XX$. In this formulation, the group
multiplication is $(t,M) (s,N) = (t+Ms, MN)$, with neutral element
$(0, \one)$ and inverse elements $(t, M)^{-1} = (-M^{-1} t ,M^{-1})$.
This group will become important later.

Further notation and concepts can now better be introduced along a
paradigmatic example, which will simultaneously motivate the various
extensions to follow.

\section{Visible lattice points and their shift
  space}\label{sec:motiv}

Consider the \emph{visible points} $V$ of the square lattice,
$\ZZ^{2}$, which are defined as
\[
      V \ts \defeq \, \{ (m,n) \in \ZZ^{2} : \gcd(m,n) = 1 \} \ts .
\]
They are also known as the primitive points, and are used in many
places; see also the cover page of \cite{Apo}.  Clearly, one has
$V\nts\nts =\ZZ^{2}\setminus \bigcup_{p\in \cP}(p\ts\ZZ^{2})$, where
$\cP$ denotes the set of rational primes.  Figure~\ref{fig:vis+gau}
shows a finite patch around the origin, in comparison with another set
that will be discussed later, in Section~\ref{sec:Gaussian}.  Let us
recall some well-known properties of $V$; see \cite{BMP,TAO} and
references therein for background and further results.

\begin{fact}
  The set\/ $\ts V\nts$ is uniformly discrete, but not relatively
  dense.  In particular, $V\nts$ contains holes of unbounded inradius
  that repeat lattice-periodically.  Yet, it satisfies\/
  $V\! - \nts V \nts\nts = \ZZ^{2}$ and has natural density\/
  $\ts\dens (V) = \frac{1}{\zeta(2)} = \frac{6}{\pi^{2}}$, where\/
  $\zeta (s)$ is Riemann's zeta function.
    
  Furthermore, the set\/ $V\nts$ is pure point diffractive, with the
  diffraction measure being invariant under the action of the affine
  group\/ $\ZZ^{2}\nts \rtimes \GL (2,\ZZ)$.  \qed
  \end{fact}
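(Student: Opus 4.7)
The plan is to treat the six assertions in order of increasing technical difficulty, ending with the diffraction statement, which is the technical heart.

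Uniform discreteness is immediate from $V \subseteq \ZZ^{2}$. To exhibit arbitrarily large holes that repeat lattice-periodically (which simultaneously takes care of non-relative denseness and of the unbounded-inradius claim), I would fix $N \in \NN$, choose $(2N+1)^2$ pairwise distinct rational primes $p^{}_{i,j}$ indexed by $(i,j) \in \ZZ^2 \cap [-N,N]^2$, and invoke the Chinese remainder theorem to produce $(a,b) \in \ZZ^2$ with
\[
  a \equiv -i \pmod{p^{}_{i,j}}
  \quad \text{and} \quad
  b \equiv -j \pmod{p^{}_{i,j}}
\]
for every pair $(i,j)$. Every lattice point $(a+i, b+j)$ in the $(2N+1) \times (2N+1)$ box centred at $(a,b)$ is then divisible by $p^{}_{i,j}$, hence non-visible. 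Since the congruence conditions depend only on the residue of $(a,b)$ modulo $P \defeq \prod_{i,j} p^{}_{i,j}$, the same hole reappears with lattice period $P \ZZ^2$.

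For $V - V = \ZZ^2$, I would give a direct witness: given $n = (n^{}_{1}, n^{}_{2}) \in \ZZ^2$, set $v^{}_{1} = (1, n^{}_{2}+1)$ and $v^{}_{2} = v^{}_{1} - n = (1-n^{}_{1}, 1)$, both of which lie in $V$ by inspection of their coordinate gcds. The density follows from the representation $V = \ZZ^2 \setminus \bigcup_{p \in \cP} p\ZZ^2$ via inclusion–exclusion with Möbius weights, yielding $\dens(V) = \sum_{n \geqslant 1} \mu(n)/n^2 = 1/\zeta(2)$; the exchange of density limit with Möbius sum is controlled by standard tail estimates along a centred van Hove averaging sequence.

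For the diffraction assertion, the idea is to compute the natural autocorrelation $\gamma$ of $V$ by the same Möbius decomposition: each sublattice $n\ZZ^2$ contributes a Dirac comb whose Fourier transform is a weighted comb on the dual lattice, and summing these contributions yields $\widehat{\gamma}$ as an absolutely convergent pure point measure supported on $\ZZ^2$. Invariance under $\ZZ^2 \rtimes \GL(2,\ZZ)$ then has two sources: $\gamma$ is supported on $V - V = \ZZ^2$, which renders $\widehat{\gamma}$ periodic under translation by $\ZZ^2$ (by self-duality of $\ZZ^2$); and any $M \in \GL(2,\ZZ) = \Aut(\ZZ^2)$ preserves primitivity, so $MV = V$, whence $\gamma$ is $\GL(2,\ZZ)$-invariant, and so is its Fourier transform (with the transpose-inverse action, which is again inside $\GL(2,\ZZ)$). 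The main obstacle is the justification of the Möbius inclusion–exclusion at the level of the autocorrelation: one must show that the sum of the sublattice autocorrelations actually converges to the autocorrelation of $V$ in an appropriate sense, which demands uniform tail estimates along the averaging sequence. Once this convergence is in place, both the pure point nature of $\widehat{\gamma}$ and the two equivariance properties follow in a largely formal way.
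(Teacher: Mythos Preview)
The paper does not actually prove this Fact: it is stated as a recollection of known results, with a citation to \cite{BMP,TAO} and a bare \qed. Your arguments for uniform discreteness, the CRT construction of lattice-periodic holes, the explicit witnesses for $V-V=\ZZ^2$, and the M\"obius computation of the density are all correct and match the standard route in those references. Your invariance argument for $\widehat{\gamma}$ is also sound: $MV=V$ gives $\GL(2,\ZZ)$-invariance of $\gamma$ and hence of $\widehat{\gamma}$, and $\supp(\gamma)\subseteq\ZZ^2$ forces $\widehat{\gamma}$ to be $\ZZ^2$-periodic.

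There is, however, a genuine slip in the diffraction sketch. You assert that the M\"obius sum ``yields $\widehat{\gamma}$ as an absolutely convergent pure point measure supported on $\ZZ^2$''. This is false. The Poisson dual of $\delta_{n\ZZ^2}$ is a comb on $\tfrac{1}{n}\ZZ^2$, so any such sum is supported on the \emph{dense} set $\bigcup_n \tfrac{1}{n}\ZZ^2\subset\QQ^2$, with infinitely many Bragg peaks in every bounded region --- which is indeed what the diffraction of $V$ looks like; see \cite{BMP}. Your own later (correct) remark that $\widehat{\gamma}$ is $\ZZ^2$-\emph{periodic} already rules out support on $\ZZ^2$: a $\ZZ^2$-periodic pure point measure concentrated on $\ZZ^2$ would have to be $c\,\delta_{\ZZ^2}$, forcing $\gamma=c\,\delta_{\ZZ^2}$, whereas the autocorrelation coefficients $\gamma(z)$ visibly depend on which primes divide $z$. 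A second, smaller issue: the M\"obius decomposition $\delta_V=\sum_n\mu(n)\,\delta_{n\ZZ^2}$ gives $\delta_V$, not $\gamma$; passing to the autocorrelation is an Eberlein convolution and produces cross terms between different sublattices, so ``each sublattice contributes a comb and one sums'' is too naive as stated. Once the support claim is corrected and the cross terms are accounted for, the rest of your outline (including the identification of the tail-estimate obstacle) is on target.
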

  
\begin{figure}
\begin{center}
   \includegraphics[width=0.8\textwidth]{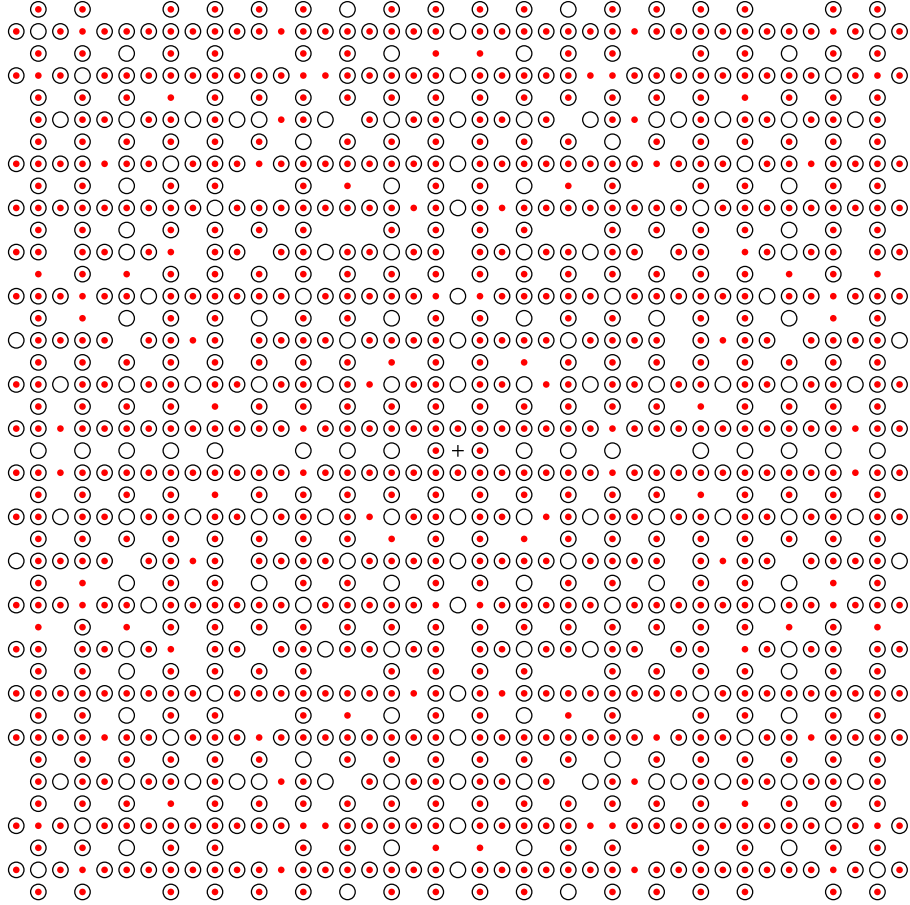}
\end{center}
\caption{\label{fig:vis+gau}Central patch of the visible points of
  $\ZZ^{2}$ (dots) and of the square{\ts}- free Gaussian integers
  (circles). The cross in the centre marks the origin. }
\end{figure}

Now, let $\XX^{}_{V} \defeq \overline{\ZZ^{2} + V}$ be the orbit
closure of $V$ under the shift (or translation) action of $\ZZ^{2}$,
where the closure is taken in the standard product topology, also
known as the \emph{local topology} due to its geometric
interpretation: Two configurations (or subsets) are close if they
agree on a large neighbourhood of $0\in\ZZ^2$. In particular, since
$V$ has holes of arbitrary size, one immediately obtains that
$\varnothing\in\XX^{}_{V}$, where $\varnothing$ is the empty set and
represents the all-$0$ configuration. Clearly, $\XX^{}_{V}$ is a
compact space, which is canonically identified with a subshift in
$\{ 0,1 \}^{\ZZ^{2}}$, and $\bigl( \XX^{}_{V},\ZZ^{2} \bigr)$ is a
topological dynamical system.

Call a subset of $\ZZ^{2}$ \emph{admissible} if it misses at least one
coset modulo $p\ts\ZZ^{2}$ for any $p \in \cP$.  One easily verifies
that the set of admissible sets constitutes a subshift of
$\{ 0,1 \}^{\ZZ^{2}}$ as well, denoted by $\AAA$.  Since the set $V$
by the remark above misses the zero coset modulo each $p\ts\ZZ^{2}$,
one readily verifies that the elements of $\XX^{}_{V}$ are admissible,
so $\XX^{}_{V} \subseteq \AAA$. In fact, it was shown in
\cite[Lemma~4]{BH} that $V$ shows all cosets except the zero coset
modulo each $p\ts\ZZ^{2}$, and is thus a maximal element of
$\XX^{}_{V}$. Further, one has the following result, the first part of
which will be generalised below on the basis of
Propositions~\ref{prop:locator} and \ref{prop:locator-2}.

\begin{prop}[\cite{BH}\label{prop:visible}]
  The space\/ $\XX^{}_{V}$ coincides with the shift space of
  admissible sets, $\AAA$. In particular, $\XX^{}_{V}$ is hereditary\/
  $(\nts$closed under the formation of subsets$\ts\ts )$. The
  topological dynamical system $\bigl( \XX^{}_{V},\ZZ^{2} \bigr)$ has
  topological entropy $\frac{6}{\pi^{2}}\nts\log (2)$.
  
  With respect to the existing natural frequency measure\/
  $\nu^{}_{\mathrm{M}}$, which is also known as the Mirsky measure,
  the measure-theoretic dynamical system\/
  $\bigl( \XX^{}_{V}, \ZZ^{2}, \nu^{}_{\mathrm{M}} \bigr)$ has pure
  point dynamical spectrum, but trivial topological point spectrum.
    
  The measure\/ $\nu^{}_{\mathrm{M}}$ is ergodic for the\/
  $\ZZ^{2}$-action, and\/ $V$ is a generic element for\/
  $\nu^{}_{\mathrm{M}}$ in\/ $\XX^{}_{V}$. Moreover, the
  measure-theoretic entropy for\/ $\nu^{}_{\mathrm{M}}$ vanishes. \qed
\end{prop}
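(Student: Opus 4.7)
The proposition packages seven claims, and the plan is to establish them in the stated order, since the identification $\XX^{}_{V} = \AAA$ drives everything else. The inclusion $\XX^{}_{V} \subseteq \AAA$ was essentially noted in the text: $V$ is admissible, admissibility is preserved by translation, and it is closed in the local topology. For the converse, I would take any admissible pattern $P$ on a ball $B^{}_r \cap \ZZ^{2}$ and realise $(V - t) \cap B^{}_r = P$ for some $t$. Only the primes $p \leq 2r$ give non-trivial constraints on $B^{}_r$, since each larger prime contributes at most one coset representative in the ball. For every such $p$, admissibility of $P$ supplies a coset $c^{}_p$ of $p\ZZ^{2}$ avoided by $P$, and a componentwise Chinese Remainder Theorem argument then yields $t$ with $(V - t) \cap B^{}_r = P$. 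Heredity is immediate from the definition, since deleting support points only leaves more cosets empty.

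For the topological entropy, heredity gives the lower bound $h^{}_{\mathrm{top}} \geq \frac{6}{\pi^{2}}\log(2)$ because every subset of $V \cap B^{}_N$ is admissible and $|V \cap B^{}_N|/|B^{}_N \cap \ZZ^{2}| \to 6/\pi^{2}$; the matching upper bound comes from an inclusion--exclusion over primes $p \leq N$ whose total error is dominated by the convergent sum $\sum^{}_p p^{-2}$. For the Mirsky measure and the spectral claims, I would introduce the compact abelian group $\cH = \prod^{}_{p \in \cP} \ZZ^{2}/p\ZZ^{2}$ together with the factor map $\phi \! : \cH \to \XX^{}_{V}$ defined by
\[
  \phi\bigl( (a^{}_p)^{}_{p \in \cP} \bigr) \, = \,
  \{ n \in \ZZ^{2} : n - a^{}_p \notin p\ZZ^{2} \text{ for all } p \in \cP \} ,
\]
which intertwines the $\ZZ^{2}$-action on $\XX^{}_{V}$ with the diagonal translation on $\cH$. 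Pushing forward the Haar measure on $\cH$ defines $\nu^{}_{\mathrm{M}}$; density of the $\ZZ^{2}$-orbit of the origin in $\cH$ gives ergodicity; since $\phi$ is a measurable isomorphism, the dynamical spectrum equals $\widehat{\cH}$ and is pure point, while the measure-theoretic entropy vanishes because the rotation on $\cH$ is equicontinuous. Triviality of the topological point spectrum then follows from proximality to $\varnothing$: the holes of unbounded inradius in $V$ propagate by heredity to every $x \in \XX^{}_V$, producing translates $t^{}_n$ with $T^{}_{t^{}_n} x \to \varnothing$, so any continuous eigenfunction satisfies $|f(x)| = |f(\varnothing)|$ for all $x$; since $\varnothing$ is a fixed point, a non-trivial eigenvalue forces $f(\varnothing) = 0$ and hence $f \equiv 0$.

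Finally, genericity of $V$ reduces by Stone--Weierstrass to checking, for each admissible patch $P$, that the density of translates $t \in \ZZ^{2}$ with $(V - t) \cap B^{}_r = P$ equals $\nu^{}_{\mathrm{M}}([P])$; both sides are computed by the same Mirsky-style inclusion--exclusion over primes and cosets modulo $p\ZZ^{2}$. \emph{The main obstacle} is the identification $\XX^{}_{V} = \AAA$: realising a prescribed admissible patch by an honest translate of $V$, while simultaneously respecting constraints from arbitrarily large primes, is the one genuinely delicate step, after which the spectral, entropic, and generic claims follow cleanly from the equicontinuous factor $\phi$.
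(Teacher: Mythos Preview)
The paper does not prove this proposition; it is stated with a \qed and cited to \cite{BH}. The only part the paper revisits is the identity $\XX^{}_{V}=\AAA$, which it establishes in the generalised Erd\H{o}s setting via the locator-set Proposition~\ref{prop:locator} and the argument in Theorem~\ref{thm:max-extend}. So the relevant comparison is between your sketch of $\XX^{}_{V}=\AAA$ and the paper's locator-set argument.

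Here your plan has a genuine gap. You say that, for an admissible patch $P$ on $B^{}_r\cap\ZZ^2$, choosing for each small prime $p$ a coset $c^{}_p$ missed by $P$ and solving the resulting CRT system yields a $t$ with $(V-t)\cap B^{}_r = P$. But that system only forces $t+P$ to avoid the zero coset modulo each small $p$, giving $P\subseteq (V-t)\cap B^{}_r$; it does \emph{not} force the complement $Q=(B^{}_r\cap\ZZ^2)\setminus P$ to land outside $V$. To get equality you must additionally arrange, for each $q\in Q$, that $t+q\in p\ZZ^2$ for \emph{some} prime $p$. The paper handles exactly this in Proposition~\ref{prop:locator}: to each $q\in Q$ one assigns its own sufficiently large $b^{}_q\in\cB$ (here, a large prime) with $q$ lying in a coset of $b^{}_q\ZZ^2$ not met by $P$, and adds the congruences $t\equiv -q\bmod b^{}_q$ to the CRT system. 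Your remark that ``larger primes contribute at most one coset representative in the ball'' is precisely why such $b^{}_q$ exist, but you then need to \emph{use} those large primes, not discard them. Once you incorporate this, the remaining density argument (showing the solution set has positive density despite infinitely many residual constraints) is the Erd\H{o}s convergence step, which in the visible-points case is just $\sum_p p^{-2}<\infty$.

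The rest of your outline---the entropy bounds via heredity and inclusion--exclusion, the group factor $\cH=\prod_p \ZZ^2/p\ZZ^2$ with the pushforward of Haar measure, proximality to $\varnothing$ for the triviality of the topological point spectrum, and the Mirsky-type computation for genericity---matches the standard treatment in \cite{BH,BHS} that the paper invokes. One caution: you assert that $\phi$ is a measurable isomorphism, which is true but not automatic; it is the ``weak model set of maximal density'' statement from \cite{BHS}, and it is what makes the pushforward of Haar coincide with the patch-frequency (Mirsky) measure and delivers pure point spectrum with zero entropy.
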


The characterisation of a number-theoretic shift space via an
admissibility condition was originally observed by Sarnak for the
square{\ts}-free integers, and later extended to Erd\H{o}s $\cB$-free
numbers in \cite{Abda} and generalised to the lattice setting in
\cite{PH}. Since this step is vital to us, we later present a
streamlined version of the proof that covers the generality we need.

\begin{remark}
  The generating shifts induce unitary operators on the Hilbert space
  $L^2 (\XX^{}_{V}, \nu^{}_{\mathrm{M}} )$, and the simultaneous
  eigenfunctions form a basis of this space \cite{BHS,BL}.  Except for
  the trivial one, no other eigenfunction is continuous. However, as
  follows from a recent result by Keller \cite{Keller}, see also the
  discussion in \cite{BHS,BSS}, there is a subset of $\XX^{}_{V}$ of
  full measure on which the eigenfunctions \emph{are} continuous.
  This is related to the fact that $V$ is a weak model set of maximal
  density \cite{BHS} in the cut and project scheme $(\RR^2, H, \cL)$
  with compact internal group $H = \prod_{p\in\cP} \ZZ^2/p\ts\ZZ^2$
  and the lattice $\cL$ being the diagonal embedding of $\ZZ^2$ into
  $\RR^2 \nts\nts \times \nts\nts H$.  It is an interesting open
  problem to understand the missing null set, and to connect it with
  the rather intricate relation between the topological and the
  measure{\ts}-theoretic structure of this dynamical system.  \exend
\end{remark}

Let $\Aut (\XX^{}_{V})$ be the automorphism group of $\XX^{}_{V}$, by
which we mean the group of all homeomorphisms of $\XX^{}_{V}$,
irrespective of whether they commute with the generators $T_1, T_2$ of
the $\ZZ^{2}$-action or not.  The translation action of $\ZZ^{2}$ on
$\XX^{}_{V}$ is faithful, wherefore we have
$\cG \defeq \langle T_{1}, T_{2} \rangle \simeq \ZZ^{2}$. Clearly,
$\XX^{}_{V}$ is not the full shift, and $\varnothing$ is the only
fixed point of $\XX^{}_{V}$ under the translation action, since
$\ZZ^2$ (as the all-$1$ configuration) is not an element of
$\XX^{}_{V}$.

The \emph{symmetry group} of $\XX^{}_{V}$, see \cite{Baa} and
references therein for background, is
\[
  \cS (\XX^{}_{V}) \, \defeq \, \cent^{}_{\Aut(\XX^{}_{V})} (\cG) \, =
  \, \{ H \in \Aut (\XX^{}_{V}) : G H = H G \text{ for all } G \in \cG
  \} \ts ,
\]
which clearly contains $\cG$ as a normal subgroup. This centraliser is
often called the automorphism group of the subshift, denoted as
$\Aut (\XX^{}_{V}, \cG)$, but we prefer to avoid the potential
confusion with the automorphism group in the above (or Smale)
sense. For any $S\in \cS (\XX^{}_{V})$, $S(V)$ has a dense shift orbit
in $\XX^{}_{V}$ (as $V$ has dense orbit by definition). Moreover, one
has $S(\varnothing)=\varnothing$ since $S(\varnothing)$ can also be
seen as a fixed point under the translation action.

Now, consider an arbitrary $S\in \cS (\XX^{}_{V})$.  By the CHL
theorem, there is a block code (or map)
$\phi \!:\,\{0,1\}^{[-\ell,\ell]^2}\xrightarrow{\quad} \{0,1\}$ of a
suitable size (parameterised by $\ell$) such that, for any
$x\in \XX^{}_{V}$, the value of $Sx$ at a position $k\in\ZZ^{2}$ is
given by the value under $\phi$ of the corresponding block of $x$
around this very position, which we call its \emph{centre}. This means
\[
    (S x)^{}_{k} \, = \,
    \phi\big(x^{}_{[k+[-\ell,\ell]^2]}\big) ,
\] 
where $x^{}_{[k+[-\ell,\ell]^2]}(m)=x^{}_{k+m}$ for
$m\in [-\ell,\ell]^2$. Since $S(\varnothing)=\varnothing$, it is clear
that $\phi \bigl( 0^{}_{[-\ell,\ell]^2} \bigr) = 0$.

Next, following \cite{BRY,Baa}, we define the \emph{extended symmetry
  group} as
\[
     \cR (\XX^{}_{V}) \, \defeq \, \norm^{}_{\Aut(\XX^{}_{V})} (\cG) \, = \,
     \{ H \in \Aut (\XX^{}_{V}) :  H \cG H^{-1} = \ts \cG \ts \} \ts ,
\]
which contains both $\cG$ and $\cS (\XX^{}_{V})$ as normal subgroups.
Every $H\in \cR (\XX^{}_{V})$ must satisfy
$H(\varnothing) = \varnothing$, as $H(\varnothing)$ can once again be
shown to be fixed under any element of $\cG$. Since every (extended)
symmetry induces an automorphism of $\cG \simeq \ZZ^2$ via the
conjugation action, $\cR (\XX^{}_{V})$ can at most be a group
extension of $\cS (\XX^{}_{V})$ by $\Aut (\ZZ^2 ) = \GL (2,\ZZ)$.

Let us state the final result for this specific example, which is
a special case of our more general statement 
(Theorem~\ref{thm:visible}) in the next section.

\begin{coro}\label{coro:vis-symm}
  For the topological dynamical system\/
  $\bigl( \XX^{}_{V},\ZZ^{2} \bigr)$, the symmetry group is the
  minimal one, so\/ $\cS (\XX^{}_{V}) = \cG \simeq \ZZ^{2}$, while the
  extended symmetry group is
\[
      \cR (\XX^{}_{V}) \, = \, \cS (\XX^{}_{V}) \rtimes \Aut (\ZZ^{2}) 
      \, \simeq \, \ZZ^{2} \rtimes \GL (2,\ZZ) \ts ,
\]     
hence the maximal extension possible.  \qed
\end{coro}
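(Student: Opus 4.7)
My plan starts with the general structural reduction $\cR = \cS \rtimes \GL (2,\ZZ)$. Any $H\in\cR$ satisfies $H\cG H^{-1}=\cG$, so conjugation defines a homomorphism $\pi\colon\cR\to\Aut(\cG)=\GL(2,\ZZ)$ with kernel $\cS$. To see that $\pi$ is surjective with a canonical splitting, I would check that each $h^{}_{\nts M}$ with $M\in\GL(2,\ZZ)$ lies in $\cR$: the lattice automorphism $M$ permutes the cosets of $p\ZZ^2$ in $\ZZ^2$ while fixing the zero coset, so $h^{}_{\nts M}$ preserves the admissibility condition defining $\AAA=\XX^{}_{V}$, and the identity $h^{}_{\nts M} T^{}_{t} h^{-1}_{\nts M}=T^{}_{Mt}$ gives $\pi(h^{}_{\nts M})=M$. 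The splitting $M\mapsto h^{}_{\nts M}$ then yields the semi-direct product with the natural $\GL(2,\ZZ)$-action on $\ZZ^2$, reducing the corollary to proving $\cS=\cG$.

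For that, the CHL theorem presents any $S\in\cS$ as a block code $\phi$ of some radius $\ell$, with $\phi$ vanishing on the all-zero block since $S(\varnothing)=\varnothing$. Every singleton $\{n\}$ is admissible and hence lies in $\XX^{}_{V}$, and shift equivariance gives $S(\{n\})=n+F$ for a non-empty $F:=\supp(S(\{0\}))\subseteq[-\ell,\ell]^2$. To show $|F|=1$, I would fix a sufficiently large prime $p$ and a non-zero residue $a\in(\ZZ/p\ZZ)^2$ and consider $V':=V\cap(a+p\ZZ^2)$, which lies in $\XX^{}_{V}$ by heredity. Its points have pairwise $\ell^{\infty}$-distance at least $p$, so for $p>2\ell$ every window of radius $\ell$ contains at most one point of $V'$, and the block code acts singletonwise to give $S(V')=\bigcup_{n\in V'}(n+F)$ as a disjoint union. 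Choosing a second large prime $q\ne p$ such that distinct elements of $F$ are inequivalent modulo $q\ZZ^2$, and using the Chinese-remainder-plus-density observation that $V'\bmod q\ZZ^2=(\ZZ/q\ZZ)^2\setminus\{0\}$, one finds $S(V')\bmod q\ZZ^2=\big((\ZZ/q\ZZ)^2\setminus\{0\}\big)+F$, which exhausts $(\ZZ/q\ZZ)^2$ as soon as $|F|\geq 2$ and so contradicts admissibility of $S(V')$. Hence $|F|=1$, and composing $S$ with the corresponding shift normalises $F=\{0\}$.

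The main obstacle, in my view, is then upgrading from the normalised case $F=\{0\}$ (which only determines $\phi$ on blocks of Hamming weight at most one) to $S=\mathrm{id}$ on the whole of $\XX^{}_{V}$, since $\phi$ on blocks of higher Hamming weight remains unconstrained. My plan is to induct on the Hamming weight $w$ of admissible blocks $B$, showing $\phi(B)=B^{}_{0}$: given the claim up to weight $w$, for a weight-$(w+1)$ admissible block $B$ I would form an admissible configuration consisting of $B$ at the origin together with many sparse, well-separated probe patterns, and re-run the coset argument of the previous paragraph modulo a suitable auxiliary prime, so that far-away contributions are handled by the inductive hypothesis while any discrepancy $\phi(B)\neq B^{}_{0}$ produces a missing-coset violation that contradicts admissibility of the image. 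Once $\phi(B)=B^{}_{0}$ for every admissible block, $S$ agrees with the identity on the dense subset of finitely-supported configurations in $\XX^{}_{V}$, and $S=\mathrm{id}$ follows by continuity, completing $\cS=\cG$ and hence the corollary.
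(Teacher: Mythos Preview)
Your reduction of the normaliser part via the splitting $M\mapsto h^{}_{\nts M}$ is correct and matches the paper. Your argument for $|F|=1$ is a valid variant of the paper's: instead of a handcrafted set of $p^{2}-1$ well-separated points (one per non-zero coset), you use the infinite but $p$-sparse set $V'=V\cap(a+p\ts\ZZ^{2})$, and the key coset computation modulo a second prime $q$ goes through as you describe.

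The genuine gap is in your inductive step from $F=\{0\}$ to $S=\mathrm{id}$. Your ``missing-coset violation'' scheme only yields a contradiction when $\phi(B)=1$ while $B^{}_{0}=0$: the spurious $1$ at the origin supplies the zero coset modulo $q$, and the far-away singleton probes (handled by the base case, not by an inductive hypothesis on weight) supply the non-zero cosets, so $S(U)$ meets all cosets. In the complementary case $B^{}_{0}=1$ and $\phi(B)=0$, the image $S(U)$ merely \emph{loses} a point; it still misses the zero coset modulo $q$ and remains admissible, so no contradiction arises. Your induction as stated therefore stalls on this half, and nothing in the sketch explains how the inductive hypothesis on lower weights would rescue it.

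The paper avoids this asymmetry by proving only the one-sided statement: every admissible block $C$ with $C^{}_{0}=0$ satisfies $\phi(C)=0$. No induction on weight is needed --- one places $C$ inside $V$ at a position $s\in p\ts\ZZ^{2}$ (such $s$ exists because $C^{}_{0}=0$ forces $s\notin V$), adjoins $p^{2}-1$ well-separated points of $V$ covering the non-zero cosets, and obtains the same coset contradiction you used. This yields $S(U)\subseteq U$ for every $U\in\XX^{}_{V}$. Since $S^{-1}$ is again a symmetry fixing $\varnothing$ and $\{0\}$, the identical argument gives $S^{-1}(U)\subseteq U$, whence $S=\mathrm{id}$. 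Replacing your induction by this monotonicity-plus-inverse step closes the gap cleanly.
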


This result shows that positive (topological) entropy is very well
compatible with a minimal centraliser, while the factor group
$\cR (\XX^{}_{V})/\cS (\XX^{}_{V})$ need neither be a finite nor a
periodic group, where the latter statement implies that the factor
group contains elements of infinite order. This combination can also
occur for subshifts with zero entropy, as can be seen from the
subshift that is obtained as the orbit closure of a singleton
configuration and contains the shift orbit of this configuration
together with the all-$0$ configuration; compare \cite[Ex.~4.3]{TAO}.

\section{General lattice setting}\label{sec:lattice}

The statement of Corollary~\ref{coro:vis-symm} 
is not restricted to $d=2$. Indeed, one has the following 
generalisation; see \cite{BMP,BH,PH} for its first part.

\begin{theorem}\label{thm:visible}
  Let\/
  $V = \{ (n^{}_{1}, \ldots , n^{}_{d}) \in \ZZ^{d} : \gcd (n^{}_{1},
  \ldots , n^{}_{d}) =1\}=\ZZ^{d}\setminus \bigcup_{p}(p\ts\ZZ^{d})$ be
  the set of visible points of\/ $\ZZ^{d}$, with\/ $d\geqslant 2$, and
  consider the topological dynamical system\/
  $\bigl( \XX^{}_{V}, \ZZ^{d} \bigr)$ with\/
  $\XX^{}_{V} = \overline{\ZZ^{d} + V}$.  Then, $\XX^{}_{V}$ has
  topological entropy\/ $\log(2)/\zeta(d)$ and satisfies\/
  $\XX^{}_{V} = \AAA$, where\/ $\AAA$ consists of all admissible
  subsets of\/ $\ZZ^d$, that is, all subsets\/ $U\nts \subset \ZZ^d$
  such that, for every\/ $p\in\cP$, $U$ misses at least one coset
  modulo\/ $p \ts\ZZ^d$.
  
  The symmetry group, or topological centraliser, of this system is\/
  $\cS (\XX^{}_{V}) = \ZZ^{d}$, while its extended symmetry group, or
  topological normaliser, is\/
  $\cR (\XX^{}_{V}) = \ZZ^{d}\nts \rtimes \GL (d,\ZZ)$.  
\end{theorem}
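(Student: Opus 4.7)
The plan is to establish the three assertions in sequence: first the identification $\XX^{}_{V} = \AAA$ together with the entropy formula, then the triviality of the centraliser, and finally the full semidirect structure of the normaliser. Throughout we freely identify configurations with their support sets via the diagram \eqref{eq:cd-1}.

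For $\XX^{}_{V} = \AAA$, the inclusion $\XX^{}_{V} \subseteq \AAA$ is immediate because $V$ itself is admissible, admissibility is preserved under translation, and it is a closed condition in the local topology. The reverse inclusion is the substantive step: given $U\in\AAA$ and a finite window $W\subset\ZZ^{d}$, one must produce an element of $\ZZ^{d}+V$ whose restriction to $W$ equals $U\cap W$. Only finitely many primes $p$ have the property that $W$ meets more than one coset of $p\ts\ZZ^{d}$, and for each such prime $U$ avoids at least one coset, call it $c_p + p\ts\ZZ^{d}$; by a Chinese-remainder-style argument one chooses a translation $t$ aligning the zero coset (the unique one missed by $V$) with the missed coset of $U$, after which hereditarity allows us to thin the result to match $U\cap W$ exactly. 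The entropy $\log(2)/\zeta(d)$ then follows by counting admissible patterns inside $[-N,N]^d$, with the density factor $\prod_p (1-p^{-d}) = 1/\zeta(d)$ of occupiable sites in the limit, matching the ansatz generalised later in Proposition~\ref{prop:locator}.

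For the centraliser, take $S\in\cS(\XX^{}_{V})$ and invoke the CHL theorem to obtain a block code $\phi \!:\, \{0,1\}^{[-\ell,\ell]^d} \xrightarrow{\quad} \{0,1\}$ of some radius $\ell$, normalised by $\phi(0)=0$ because $S(\varnothing)=\varnothing$. The key structural fact is that $V$ is a \emph{maximal} admissible set, missing precisely the zero coset modulo every $p\ts\ZZ^{d}$. The plan is to show $S(V) = V + t$ for some $t\in\ZZ^{d}$; density of the orbit $\ZZ^{d}+V$ and continuity then force $S = T^{}_{t}$. To identify $t$, one probes $\phi$ with primes $p$ much larger than $2\ell+1$: within a window of radius $\ell$, each coset of $p\ts\ZZ^{d}$ contributes at most one cell, so the only information visible to $\phi$ about the coset structure comes from single-cell deletions, which remain in $\XX^{}_{V}$ by hereditarity. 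Testing $\phi$ on such deletions determines, for each large $p$, the unique coset missed by $S(V)$ modulo $p\ts\ZZ^{d}$; compatibility across primes (via Fact~\ref{fact:cosets}) selects a single translation vector~$t$.

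For the normaliser, observe that every $M\in\GL(d,\ZZ)$ permutes the cosets of $p\ts\ZZ^{d}$ because $M(p\ts\ZZ^{d})=p\ts\ZZ^{d}$, so $M$ sends admissible sets to admissible sets, and $h^{}_{\nts M}$ of \eqref{eq:hm-def} is a homeomorphism of $\XX^{}_{V}$. The diagram \eqref{eq:cd-2} gives $h^{}_{\nts M} T^{}_{t} h^{-1}_{\nts M} = T^{}_{\nts M t}$, so $h^{}_{\nts M}\in\cR(\XX^{}_{V})$, and $M\mapsto h^{}_{\nts M}$ is an injective homomorphism. Conversely, any $H\in\cR(\XX^{}_{V})$ induces an automorphism of $\cG\simeq\ZZ^{d}$, hence an element $M\in\GL(d,\ZZ)$, and $H\circ h^{-1}_{\nts M}$ centralises $\cG$ and therefore lies in $\cS(\XX^{}_{V})=\ZZ^{d}$ by the previous step. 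This exhibits the short exact sequence $1\to\ZZ^{d}\to\cR(\XX^{}_{V})\to\GL(d,\ZZ)\to 1$ together with the splitting $M\mapsto h^{}_{\nts M}$, yielding $\cR(\XX^{}_{V}) = \ZZ^{d}\rtimes\GL(d,\ZZ)$. The main obstacle is the centraliser step: extracting a single translation vector from a finite-radius block code requires a careful probe across infinitely many primes and the precise use of the maximality of $V$, whereas the entropy and normaliser parts are largely a matter of assembling standard ingredients.
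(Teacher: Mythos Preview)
Your normaliser argument matches the paper's and is correct, but the two earlier steps have gaps. For $\AAA\subseteq\XX^{}_{V}$, invoking ``hereditarity'' to thin a translate of $V$ down to $U\cap W$ is circular: a thinned translate is merely admissible, and whether every admissible set lies in $\XX^{}_{V}$ is precisely the point at issue. The paper's Proposition~\ref{prop:locator} instead shows directly that the \emph{exact} pattern $(P,Q)=(U\cap W,\, W\setminus U)$ already occurs in $V$: alongside the small-prime congruences, one assigns to each $q\in Q$ its own large prime $b_q$ and demands $t\equiv -q\bmod b_q$, which forces $t+q\notin V$ for every $q\in Q$, so no thinning is needed.

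The centraliser sketch has a more serious problem. You propose to read off, for each large prime $p$, the unique coset of $p\ts\ZZ^{d}$ missed by $S(V)$, and then to assemble a single $t\in\ZZ^{d}$ from these residues ``via Fact~\ref{fact:cosets}''. Two things fail. First, you have not shown that $S(V)$ misses \emph{exactly one} coset per prime; maximality of $V$ is not obviously inherited by $S(V)$. Second, even granting one missed residue $c_p$ for each $p$, a family $(c_p)^{}_{p\in\cP}$ need not arise from any $t\in\ZZ^{d}$ --- Fact~\ref{fact:cosets} covers only finitely many primes at a time, while $\prod_p \ZZ^{d}\!/p\ts\ZZ^{d}$ is far larger than $\ZZ^{d}$. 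The paper's route (after Mentzen) is different and avoids both issues: it first pins down $S(\{0\})$ as a singleton $\{k\}$, by showing that if the code $\phi$ sent two distinct singleton-support blocks to $1$, one could manufacture an admissible set $U$ of cardinality $p^{d}-1$ (using CRT to spread the points) whose image under $S$ meets all $p^{d}$ cosets modulo $p\ts\ZZ^{d}$; then, after replacing $S$ by $T^{}_{k}\circ S$, a parallel coset-counting argument shows that every admissible block with $0$ at its centre is sent to $0$, whence $S(U)\subseteq U$ for all $U\in\XX^{}_{V}$, and the maximality of $V$ forces $S=\mathrm{id}$.
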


\begin{proof}
  The statement on the centraliser is a rigidity result that is driven
  by the identity $\XX^{}_{V}=\AAA$, which also forces $\XX^{}_{V}$
  to be hereditary.  It follows from a slight modification
  of the argument put forward in \cite{Mentzen}, which we repeat here
  in a form that is tailored to the higher-dimensional lattice systems
  we consider here and below. It employs a lattice version of the
  Chinese remainder theorem (CRT) based on the pairwise coprime
  sublattices of the form $p\ts\ZZ^{d}$ ($p$ prime) of the integer
  lattice. Note that the solutions of a system of congruences appear
  lattice{\ts}-periodically, which guarantees some flexibility
  regarding the actual position of solutions in the square
  lattice. This argument also works for general lattices. \smallskip

  We start from the identity $\XX^{}_{V} = \AAA$, which follows from
  Proposition~\ref{prop:visible} together with its generalisation in
  Proposition~\ref{prop:locator} and Theorem~\ref{thm:max-extend}
  below. First, we show that any symmetry $S\in \cS (\XX^{}_{V})$ acts
  on the singleton set $U^{}_{0}=\{0\} \in \XX^{}_{V}$ as a
  translation, that is, $S(U^{}_{0})=U^{}_{0}+k$ for some $k\in\ZZ^d$,
  where $U^{}_{0} \in \XX^{}_{V}$ follows from $\XX^{}_{V} = \AAA$.
  Since $S$ is a homeomorphism that commutes with the shift action, it
  corresponds to a block code $\phi$, by the CHL theorem.  Here and in
  what follows, we identify any subset of $\ZZ^d$ with its
  characteristic function, and thus with a binary configuration, as
  explained in Section~\ref{sec:prelim}. Then,
  $S (U^{}_{0}) = U^{}_{0} + k$ is equivalent to saying that $\phi$
  takes the value $1$ on exactly one block with singleton support. For
  the latter, note first that $\phi$ cannot take the value $0$ on all
  blocks with singleton support, as this would imply
  $S(U^{}_{0})=\varnothing$ which is impossible ($S$ is invertible and
  we already have $S(\varnothing)=\varnothing$).
  
  Assuming the existence of two different blocks with singleton
  support that are sent to $1$ by the code, there is a prime $p$ and
  an admissible set $U\nts\subset \ZZ^d$ of cardinality $p^{\ts d} -1$ that
  comprises all cosets modulo $p\ts\ZZ^d$ except the zero coset, together
  with the property that $S(U)$ shows \emph{all} possible cosets
  modulo this very $p\ts\ZZ^{d}$ and is thus no longer admissible.  To
  see this, $p$ is chosen such that the difference $n$ of the centres
  of the two blocks (a non-zero element of $\ZZ^d$) does \emph{not}
  belong to $p\ts\ZZ^{d}$. In fact, by the CRT, the $p^{\ts d}-1$ elements of
  $U$ can be chosen arbitrarily well separated from one another.
  Then, the assertion follows because $S(U)$ will contain a translate
  of $U \cup (n + U)$ and, since $S(U)$ is admissible, a translate of
  this union is contained in $V\!$. Consequently, for some $m\in\ZZ^d$,
  both $U+m$ and $(U+n)+m$ consist of $p^{\ts d}-1$ elements and 
  are equal modulo $p\ts\ZZ^{d}$ (both showing all non-zero cosets modulo 
  $p\ts\ZZ^{d}\ts $) --- a contradiction to $n\neq 0$ modulo $p\ts\ZZ^{d}$
  from the construction.
  
  After replacing $S$ by $S^{\ts\prime} \defeq T^{}_{k} \circ S$, 
  so that $S^{\ts\prime} (U^{}_{0}) = U^{}_{0}$, and slightly enlarging 
  the size of the block code, one can assume that the only block with 
  singleton support that is sent to $1$ is the block that has value $1$ 
  only at $0$. One is then left to show that $S^{\ts\prime}=\mathrm{id}$. 
  For convenience, we now rename $S^{\ts\prime}$ by $S$, and
  show that $S=\mathrm{id}$.

  This follows from the maximality of $V$ together with the crucial
  observation that $S(U)\subseteq U$ (equivalently
  $U \subseteq S^{-1}(U)$) for all $U \in \XX^{}_{V}$, due to the
  properties of the block code for $S$ just established. 
  So, any (automatically admissible)
  block of $1^{}_{U}$ with value $0$ at its central position is sent
  to $0$ by the code. This claim can be shown by an argument similar
  to the one used above.  Assume the existence of an admissible block
  $C$ with value $0$ at its centre that is sent to $1$ by the
  code. This block then appears in $V$ at a position $s$ with
  $s\in p\ts\ZZ^d$ for a suitable $p$. Again, one can choose a set $U$ of
  $p^{\ts d}-1$ elements of $V$ that shows all cosets except the zero coset
  modulo $p\ts\ZZ^{d}$. By the CRT, we may assume that these $p^{\ts d}-1$
  elements are well separated and also well separated from $s$
  (together with the whole block $s+C$ of $V$ at $s$). It is then
  immediate that $U\cup (s+C)$ is admissible and that $S(U\cup (s+C))$
  will contain the set $U\cup\{s\}$ and thus shows all cosets modulo
  $p\ts\ZZ^{d}$, a contradiction. \smallskip

  It remains to determine the normaliser.  Since $\cG \simeq \ZZ^{d}$,
  with $\Aut (\ZZ^d) = \GL (d,\ZZ)$, there is a group homomorphism
\[
   \psi \! : \, \cR (\XX^{}_{V}) \xrightarrow{\quad} \GL (d,\ZZ)
\]
that is induced as follows. If $H \in \cR (\XX^{}_{V})$, we have
$H \cG H^{-1} = \cG$, so a set of generators of $\cG$ must be mapped
to a (possibly different) set of generators under the conjugation
action. Starting from our canonical choice,
$\cG = \langle T^{}_{e_1}, \ldots, T^{}_{e_d} \rangle$, one finds
$H T^{}_{i} H^{-1} = \prod_{j} T^{m^{}_{ji}}_{j}$ where the
$m^{}_{ji}$ are the matrix elements of $M^{}_{\nts H} = \psi (H)$.  It
is routine to verify the homomorphism property. In particular, with
$T^{}_{n} = T^{n^{}_1}_{e_1} \cdots T^{n^{}_d}_{e_d}$, one gets
\begin{equation}\label{eq:H-action}
    H T^{}_{n} H^{-1}  \, = \, T^{}_{M^{}_{\nts H} \ts n} \ts .
\end{equation}    

For $M\in\GL (d,\ZZ)$, in line with Eq.~\eqref{eq:hm-def}, define the
mapping $H^{}_{\! M}$ on $\XX=\{0,1\}^{\ZZ^d}$ by
\[
      ( H^{}_{\! M} x )^{}_{{n}} \, = \, x^{}_{M^{-1} {n}} \ts ,
\]
which clearly is a homeomorphism of $\XX$. Now, each $M$ maps our 
set $V$ onto itself, as $\GL (d,\ZZ)$ acts transitively on $V$. 
Consequently, also the orbit $\{ t+V : t\in \ZZ^d \}$ is mapped
onto itself by $M$, hence $M$ preserves $\XX_{V}$ by continuity.  In
other words, invoking \eqref{eq:cd-2}, we see that $H^{}_{\! M}$ is an
element of $\cR (\XX^{}_{V})$, and that
\[
     1 \, \xrightarrow{\quad} \, \cS (\XX^{}_{V})
     \, \xrightarrow{\; \mathrm{id} \;} \, \cR (\XX^{}_{V})
     \, \xrightarrow{\,\, \psi \,\,} \, \GL (d,\ZZ)
     \, \xrightarrow{\quad} \, 1
\]
is a short exact sequence. Moreover, the mapping
\[
  \varphi \! : \, \GL (d,\ZZ) \xrightarrow{\quad} \Aut (\XX^{}_{V} )
\]
defined by $\varphi (M) = H^{}_{\! M}$ is a group homomorphism as
well, with $\psi (H^{}_{\! M}) = M$. Consequently,
$\cH \defeq \varphi (\GL (d,\ZZ))$ is a subgroup of $\cR (\XX^{}_{V})$
that is isomorphic with $\GL (d,\ZZ)$. Since $\varphi\circ \psi$ acts
as the identity on $\cH$, our claim follows.
\end{proof}

\begin{remark}\label{rem:gen-vis}
  With respect to the patch frequency (or Mirsky) measure, the
  situation is also the same as for $d=2$, meaning that the dynamical
  spectrum of $(\XX^{}_{V}, \ZZ^{d}, \nu^{}_{\mathrm{M}})$ is pure
  point, with trivial topological point spectrum. Nevertheless, the
  measure{\ts}-theoretic eigenfunctions are continuous on a subset of
  $\XX^{}_{V}$ of full measure; see the discussion in
  \cite{BHS}. \exend
\end{remark}

In fact, the above multi-dimensional setting allows for a further
generalisation.

\begin{definition}\label{def:B-free-lattice}
  Let\/ $\cB = \{ b_{i} \mid i \in \NN \}$ be an infinite set of
  positive integers that is \emph{primitive} in the sense that\/
  $b_{i} | b_{j}$ implies\/ $i=j$. Consider the point set\/
  $V^{}_{\cB} = \ZZ^{d} \setminus \bigcup_{i\in\NN} b_{i} \ts \ZZ^{d}$
  in\/ $\RR^{d}$, and define\/
  $\XX^{}_{\cB} = \overline{\ZZ^{d} + V^{}_{\cB}}$, which is compact.
  Then, the dynamical system\/ $(\XX^{}_{\cB}, \ZZ^{d})$ is called a\/
  $\cB$-\emph{free lattice system}. It is called \emph{Erd\H{o}s} when
  the $b_{i}$ are pairwise coprime and satisfy
\[
    \sum_{i=0}^{\infty}\myfrac{1}{b_i^d} \, < \, \infty\ts ,
\]
which is an additional condition only for $d=1$.
\end{definition}

Note that $d=1$ is the case of $\cB$-free systems in $\ZZ$, which is
extensively studied in \cite{DKKL,KKL} and references therein. The
primitivity condition really is some irreducibility notion, as any
multiple of some $b_{i}$ could simply be removed from the set $\cB$
without any effect on $V^{}_{\cB}$.  It is obvious that $\ZZ^{d}$ in
Definition~\ref{def:B-free-lattice} can be replaced by any lattice
$\vG \subset \RR^{d}$. However, since this does not change the
arithmetic situation at hand, we restrict our attention to $\ZZ^{d}$
for now.

A set $U \subset\ZZ^d$ is called \emph{admissible} for $\cB$ if, for
every $b\in\cB$, $U$ meets at most $b^{\ts d} - 1$ cosets of the
sublattice $b\ts \ZZ^d$. Equivalently, $U$ is admissible if it misses
at least one coset of $b \ts \ZZ^d$ for each $b\in\cB$.  The set of
all admissible subsets of $\ZZ^d$ is again denoted by $\AAA$, and
constitutes a subshift.  By definition, $V^{}_{\cB} \in \AAA$, and we
thus have $\XX^{}_{\cB} \subseteq \AAA$.  If $P$ and $Q$ are disjoint
finite subsets of $\ZZ^d$, we define the \emph{locator set}
\[
     L (P,Q) \, \defeq \, \{ t \in \ZZ^d : t+P \subset V^{}_{\cB}
     \text{ and } t+Q \subset \ZZ^d \setminus V^{}_{\cB} \}
\]
in analogy to the treatment in \cite{PH}. One has the following
connection, which is a generalisation of both \cite[Prop.~2.5]{Abda}
and \cite[Thm.~2]{PH}.

\begin{prop}\label{prop:locator}
  Assume that\/ $\bigl( \XX^{}_{\cB}, \ZZ^d \bigr)$ is Erd\H{o}s, and
  let\/ $P$ and\/ $Q$ be disjoint finite subsets of\/ $\ZZ^d$. Then,
  the following properties are equivalent.
\begin{enumerate}\itemsep=2pt
\item  $L(P,Q)$ has positive natural density.
\item  $L(P,Q)\ne \varnothing$.
\item  $P$ is admissible for\/ $\cB$. 
\end{enumerate}
\end{prop}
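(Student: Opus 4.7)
The plan is to dispense with $(1)\Rightarrow (2)$ immediately (it is trivial) and to handle $(2)\Rightarrow (3)$ by contrapositive: if some $b\in\cB$ makes $P$ meet every coset of $b\ts\ZZ^d$, then $P$ in particular meets the coset $-t+b\ts\ZZ^d$, yielding $p\in P$ with $t+p\in b\ts\ZZ^d\subseteq\ZZ^d\setminus V^{}_{\cB}$, which contradicts $t\in L(P,Q)$.

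For the main implication $(3)\Rightarrow (1)$, I would combine an explicit Chinese-remainder-theorem construction with an Erd\H{o}s tail bound. Since $\sum_i b_i^{-d}<\infty$ forces $b_i\to\infty$, fix $N_0$ with $b_i>\max\{\lVert p-q\rVert_{\infty},\lVert q-q'\rVert_{\infty}:p\in P,\,q\neq q'\in Q\}$ for all $i>N_0$, and assign to each $q\in Q$ a distinct index $i(q)>N_0$. Let $C$ be the set of $t\in\ZZ^d$ satisfying the hit conditions $t\equiv -q\pmod{b^{}_{i(q)}\ts\ZZ^d}$ for all $q\in Q$ together with the avoidance conditions $t+p\notin b^{}_i\ts\ZZ^d$ for all $p\in P$ and all $i\notin\{i(q):q\in Q\}$. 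By construction, $C\subseteq L(P,Q)$: the hit conditions place $t+q\in b^{}_{i(q)}\ts\ZZ^d\subseteq\ZZ^d\setminus V^{}_{\cB}$, while for $p\in P$ one has $t+p\equiv p-q\pmod{b^{}_{i(q)}}$, which is nonzero by disjointness of $P,Q$ and the size of $b^{}_{i(q)}$.

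Next comes the density estimate. For $i\notin\{i(q):q\in Q\}$, admissibility of $P$ leaves at least one coset of $b^{}_i\ts\ZZ^d$ unhit, so the avoidance condition confines $t$ to a union of cosets of density $1-\alpha^{}_i(P)/b_i^d$ with $\alpha^{}_i(P)\leq\lvert P\rvert$. Pairwise coprimality of the $b^{}_i$ together with Fact~\ref{fact:cosets} make the conditions at different moduli independent, so any truncation $C^{}_N\supseteq C$ obtained by omitting avoidance for $i>N$ has exact density
\[
   \dens(C^{}_N) \, = \,
   \prod_{q\in Q}\myfrac{1}{b^{d}_{i(q)}}\,
   \prod_{\substack{i\leq N \\ i\notin\{i(q):q\in Q\}}}\!
   \Bigl(1-\myfrac{\alpha^{}_i(P)}{b^{d}_i}\Bigr),
\]
which decreases to a positive limit $\delta_{\infty}>0$ because $\sum_i \alpha^{}_i(P)/b^d_i\leq\lvert P\rvert\sum_i b_i^{-d}<\infty$ by Erd\H{o}s. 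The tail error $C^{}_N\setminus L(P,Q)$ is contained in $\bigcup_{i>N,\,p\in P}\{t:t+p\in b^{}_i\ts\ZZ^d\}$, whose density is at most $\lvert P\rvert\sum_{i>N} b_i^{-d}\to 0$. Choosing $N$ so large that this tail falls below $\delta_{\infty}/2$ yields $\dens(L(P,Q))\geq\dens(C^{}_N)-\delta_{\infty}/2\geq\delta_{\infty}/2>0$, which is $(1)$.

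The delicate point will be the internal consistency of the CRT system: the avoidance conditions forced by admissibility of $P$ and the hit conditions forced by $Q$ could in principle clash at shared moduli. Reserving the large, distinct primes $b^{}_{i(q)}$ exclusively for the hit conditions separates the two roles, and the size condition together with $P\cap Q=\varnothing$ make avoidance at these very moduli automatic. This is also the precise reason why admissibility of $P$ alone, rather than of $P\cup Q$, is enough.
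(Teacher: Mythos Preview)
Your proof is correct and follows essentially the same strategy as the paper: reserve distinct large moduli $b_{i(q)}$ for the hit conditions on $Q$, use admissibility of $P$ for the avoidance conditions at all remaining moduli, combine via CRT and coprimality, and control the tail via the Erd\H{o}s summability condition. The only notable difference is organisational: the paper singles out a finite set $S_1$ of small moduli and fixes a \emph{specific} missed coset there (so the density formula has the clean prefactor $c^{-d}$ and the infinite product runs only over large $b$), whereas you treat all non-reserved moduli uniformly via the avoidance factors $1-\alpha_i(P)/b_i^{d}$, relying on admissibility to keep each factor strictly positive even when $b_i^{d}\leqslant |P|$; your explicit tail estimate $|P|\sum_{i>N}b_i^{-d}$ then replaces the paper's nested-intersection argument. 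One small point of care: your final inequality really bounds the \emph{lower} density of $L(P,Q)$, since you have not shown that the natural density of $L(P,Q)$ exists; the paper has the same feature, so this is not a discrepancy.
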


\begin{proof}
  The implication $(1) \Rightarrow (2)$ is clear.  If
  $L(P,Q)\ne \varnothing$, one has $t+P \subset V^{}_{\cB}$ for some
  $t\in\ZZ^d$, so $t+P \in \AAA$ and hence $P\in\AAA$, which shows
  $(2) \Rightarrow (3)$.
  
  It remains to prove $(3) \Rightarrow (1)$. To this end, let $m =
  \card (P)$ and set
\[
    S^{}_{1} \, \defeq \, \{ b \in \cB : \min \bigl(
    \card ( P \bmod b ),   b^{\ts d} -1 \bigr)  < m \} \ts ,
\]  
which is a finite subset of $\cB$. Further, for the elements $q\in Q$,
select distinct elements $b_q$ from $\cB \setminus S^{}_{1}$, and set
$S^{}_{2} = \{ b_q : q \in Q \}$. Without loss of generality, we may
choose each $b^{}_{q}$ large enough so that $p\equiv q \bmod b^{}_{q}$
has no solution with $p\in P$, which is to say that $q$ is a
representative of a coset modulo $b^{}_{q}$ that is missed by
$P$. Since $\card (S^{}_{2}) = \card (Q)$,
$S \defeq S^{}_{1} \cup S^{}_{2}$ is still a finite subset of $\cB$,
with $S=S^{}_{1}$ for $Q=\varnothing$.

Since $P$ is admissible for $\cB$, we know that, for each $b\in\cB$,
at least one coset of $b\ts \ZZ^d$ is missed by $P$. Let $p^{}_b$ be
a representative of this coset, where we may choose $p^{}_{b} = q$ for
all $b=b^{}_{q}\in S^{}_{2}$ due to our choice of $S^{}_{2}$.  As our
system is Erd\H{o}s, we can invoke the lattice version of the CRT to
see that there is an element $t^{}_0 \in \ZZ^d$ such that
\[
     t^{}_0  \equiv - p^{}_b \bmod b \ts ,
     \quad \text{for all $b\in S$} \ts .
\]     
Note that, with the choice of the $p^{}_{b}$ for $b\in S^{}_{2}$ just
made, this comprises the congruences
$ t^{}_0 \equiv - q \bmod b^{}_q $ for all $q\in Q$.  In fact, due to
the pairwise coprimality, we know that the set of \emph{all} solutions
is given by the lattice coset $t^{}_{0} + c \ts\ZZ^d$ with
$c = \prod_{b\in S} b$.  For any $t$ from this coset and then every
$b\in S$, we thus have $t+p \not\equiv 0 \bmod b$, which is to say
that $t+P$ avoids the zero coset for all $b\in S$, while
$t+q \equiv 0 \bmod b^{}_{q}$, so no element of $t+Q$ can lie in 
$V^{}_{\cB}$.

Now, let $R_n \defeq \{ b \in \cB \setminus S : b \leqslant n \}$,
which is finite, where we assume the integer $n$ to be large enough so
that $R_n \ne \varnothing$. Now, consider
\[
    \vT_n \, \defeq \, \bigl( t^{}_{0} + c \ts \ZZ^d \bigr)
    \cap \{ t \in \ZZ^d : t \not \equiv - p \bmod b
    \text{ for all } b \in R_n \text{ and all } p \in P \} \ts .
\]
The second set is a finite union of translates of the lattice
$\gamma^{}_{n} \ZZ^d$ with $\gamma^{}_{n} = \prod_{b\in R_n} b$.
Invoking Fact~\ref{fact:cosets}, it is clear that $\vT_n$ consists of
finitely many cosets of the intersection lattice, which is
$c \ts\ts \gamma^{}_{n}\ts \ZZ^d$, and thus has a well-defined natural
density. Consequently, $\vT_n$ has density
\[
  \dens (\vT_n) \, = \, c^{-d} \prod_{b\in R_n}
  \Bigl( 1 - \frac{\card (P)}{b^{\ts d}} \Bigr)
\]
because, modulo $b$ for any $b\in R_n$, no two points of
$P$ can be equal by our choice of $S^{}_{1}$. 

Each term in the product is a positive number, again due to our choice
of $S^{}_{1}\subseteq S$, so the Erd\H{o}s condition guarantees that
the infinite product satisfies
\[
    \prod_{b\in \cB\setminus S}
    \Bigl( 1 - \frac{\card (P)}{b^{\ts d}} \Bigr)
    \, = \, D \, > \, 0 \ts ,
\]
which is to say that it converges to a positive number.  Since
$\vT^{}_{n+1} \subseteq \vT^{}_{n}$ for all large enough $n$, say
$n \geqslant n^{}_{0}$, we can take the limit $n\to\infty$ and
conclude that
$\vT_{\infty} \defeq \bigcap_{n\geqslant n^{}_{0}} \vT_n$ is a set of
solutions of our congruence conditions, for all $b \in \cB$, with
positive natural density. So, for any $t\in \vT_{\infty}$, we have
$t+P \subset V^{}_{\cB}$ together with $t+Q \subset \ZZ^d 
\setminus \! V^{}_{\cB}$ as claimed.
\end{proof}

\begin{theorem}\label{thm:max-extend}
  Let\/ $\bigl( \XX^{}_{\cB}, \ZZ^{d} \bigr)$ be a\/ $\cB$-free
  lattice system, with symmetry group\/ $\cS = \cS
  (\XX^{}_{\cB})$. Then, the group of extended symmetries is given
  by\/ $\cR = \cR (\XX^{}_{\cB}) = \cS \rtimes \GL (d, \ZZ)$, which is
  to say that the extension is always the maximally possible one.

  If\/ $\bigl( \XX^{}_{\cB}, \ZZ^{d} \bigr)$ is Erd\H{o}s, one has\/
  $\XX^{}_{\cB} = \AAA$, the system is hereditary, and it has minimal
  symmetry group, $\cS = \cG \simeq \ZZ^{d}$, and we thus get\/
  $\cR = \ZZ^{d} \rtimes \GL (d,\ZZ)$.
\end{theorem}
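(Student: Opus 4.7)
The plan is to split the argument into two halves, mirroring the structure of the statement: first establish the splitting $\cR = \cS \rtimes \GL (d, \ZZ)$ in full generality, and then, under the Erd\H{o}s hypothesis, identify $\XX^{}_{\cB}$ with $\AAA$ and show that the centraliser is minimal.

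For the general claim, I would mimic the normaliser portion of the proof of Theorem~\ref{thm:visible}. The crucial observation is that every $M \in \GL (d, \ZZ)$ fixes each sublattice $b^{}_{i} \ts \ZZ^{d}$ setwise --- because $b^{}_{i}$ is a scalar and $M$ is a lattice automorphism of $\ZZ^{d}$ --- whence $M (V^{}_{\cB}) = V^{}_{\cB}$. Through \eqref{eq:cd-2} this says that the map $h^{}_{\nts M}$ from \eqref{eq:hm-def} fixes $V^{}_{\cB}$, and the intertwining relation \eqref{eq:H-action} shows that it permutes the orbit $\ZZ^{d} + V^{}_{\cB}$, hence preserves $\XX^{}_{\cB}$ by continuity. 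Thus $h^{}_{\nts M} \in \cR$ with $\psi (h^{}_{\nts M}) = M$, where $\psi \! : \, \cR \to \GL (d, \ZZ)$ is the conjugation homomorphism. The assignment $M \mapsto h^{}_{\nts M}$ is a group homomorphism that splits $\psi$, so the short exact sequence $1 \to \cS \to \cR \to \GL (d, \ZZ) \to 1$ splits and $\cR = \cS \rtimes \GL (d, \ZZ)$.

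For the Erd\H{o}s case, I would first verify $\XX^{}_{\cB} = \AAA$ by a patch-approximation argument. Given $U \in \AAA$ and any finite window $W \subset \ZZ^{d}$, set $P = U \cap W$ and $Q = W \setminus U$. Since subsets of admissible sets are visibly admissible, $P$ itself is admissible, so Proposition~\ref{prop:locator} yields $t \in \ZZ^{d}$ with $t + P \subset V^{}_{\cB}$ and $t + Q \subset \ZZ^{d} \setminus V^{}_{\cB}$. Consequently $V^{}_{\cB} - t$ agrees with $U$ on $W$, and since $W$ was arbitrary this places $U$ in $\XX^{}_{\cB}$. Hereditariness then follows immediately from the set-theoretic definition of admissibility.

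For the minimality $\cS = \cG$, I would adapt the rigidity argument from the proof of Theorem~\ref{thm:visible} with only cosmetic changes, replacing the primes $p$ (with sublattices $p \ts \ZZ^{d}$) by the pairwise coprime $b^{}_{i} \in \cB$ (with sublattices $b^{}_{i} \ts \ZZ^{d}$). One first reduces, by composing with a suitable shift, to $S (U^{}_{0}) = U^{}_{0}$ for $U^{}_{0} = \{ 0 \}$, then fixes a large enough $b = b^{}_{i}$ and builds, via the lattice CRT as in Proposition~\ref{prop:locator}, admissible sets of cardinality $b^{\ts d} - 1$ comprising all non-zero cosets modulo $b \ts \ZZ^{d}$. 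This rules out both that $S$ sends two distinct singleton-supported blocks to $1$ and that $S$ sends an admissible block with central value $0$ to $1$. The maximality of $V^{}_{\cB}$ in $\AAA$ --- itself a direct consequence of the Erd\H{o}s CRT --- then gives $S (U) \subseteq U$ for every $U \in \XX^{}_{\cB}$, and combined with $S (V^{}_{\cB}) \in \XX^{}_{\cB}$ this forces $S = \mathrm{id}$. The equality $\cR = \ZZ^{d} \rtimes \GL (d, \ZZ)$ follows by combining with the first part. I expect the main obstacle to be merely verifying that the CRT-based constructions, originally phrased for the full set of rational primes $\cP$, still deliver well-separated solutions when one has only a countable pairwise coprime family subject to the Erd\H{o}s summability condition; this, however, is precisely what the proof of Proposition~\ref{prop:locator} accomplishes, so the adaptation should reduce to careful bookkeeping.
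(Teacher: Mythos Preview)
Your proposal is correct and follows essentially the same approach as the paper: the paper likewise uses the invariance $M(b\ts\ZZ^d)=b\ts\ZZ^d$ to produce the section $M\mapsto h^{}_{\nts M}$ splitting $\psi$, then invokes Proposition~\ref{prop:locator} with $P=S\cap B_n(0)$ and $Q=(\ZZ^d\cap B_n(0))\setminus S$ to obtain $\XX^{}_{\cB}=\AAA$, and finally refers back to the centraliser argument of Theorem~\ref{thm:visible} with the primes replaced by the coprime~$b_i$. The only minor sharpening worth noting is that, in the second half of the rigidity step, the position $s$ with $s\in b^{}_{i}\ZZ^d$ at which the admissible block $C$ (with $0$ at its centre) occurs in $V^{}_{\cB}$ is obtained by applying Proposition~\ref{prop:locator} to the pair $(C,\{0\})$, so that $s\notin V^{}_{\cB}$ forces $s\in b^{}_{i}\ZZ^d$ for some $i$; you might make this explicit in place of ``fixes a large enough $b$''.
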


\begin{proof}
  Due to the assumptions, any $\cB$-free lattice system defines a
  shift, with faithful shift action, wherefore its symmetry group,
  $\cS (\XX^{}_{\cB})$, contains a normal subgroup that is isomorphic
  with $\ZZ^{d}$, namely the one generated by the shift action itself,
  $\cG$.

  Since $\Aut(\ZZ^{d}) = \GL (d,\ZZ)$, any $M\in\GL (d,\ZZ)$ maps
  $\ZZ^{d}$ onto itself, hence one also has
  $M(b\ts \ZZ^{d}) = b\ts M(\ZZ^{d}) = b\ts \ZZ^{d}$ for any
  $b\in\cB$. This implies $M (V^{}_{\cB}) = V^{}_{\cB}$. We thus see
  that $\cH \defeq \varphi ( \GL (d,\ZZ))$ is a subgroup of
  $\cR (\XX^{}_{\cB})$ that is isomorphic with $\GL (d,\ZZ)$.  Since
  we have $\psi (\cH) = \psi (\cR(\XX^{}_{\cB})) = \GL (d,\ZZ)$, where
  $\psi$ is the group homomorphism from above, $\cS$ is the kernel of
  the group endomorphism $\varphi\circ \psi$.  By construction,
  $\varphi \circ \psi$ acts as the identity on $\cH$, and the claimed
  semi-direct product structure follows.
  
  Clearly, we have $\XX^{}_{\cB} \subseteq \AAA$, as explained
  earlier.  For the converse inclusion, when $\XX^{}_{\cB}$ is
  Erd\H{o}s, consider an arbitrary $S\in\AAA$ and, for $n\in\NN$, set
  $S_n = S \cap B_n (0)$, which is finite.  By
  Proposition~\ref{prop:locator}, for each $n\in\NN$, there exists
  some
  $t_n \in L \bigl(S_n, (\ZZ^d \cap B_n (0) )\setminus S_n \bigr) \ne
  \varnothing$, which means that
  \[
      (V^{}_{\cB} - t^{}_n ) \cap B_n (0)
      \, = \, S^{}_n \ts .
  \]
  Consequently, $\lim_{n\to\infty} (V^{}_{\cB} - t^{}_n) = S$ in the
  local topology, and $S\in\XX^{}_{\cB}$. This shows
  $\AAA\subseteq \XX^{}_{\cB}$ and hence $\XX^{}_{\cB} =
  \AAA$. Clearly, $\XX^{}_{\cB}$ is then also hereditary. Now, a
  straight-forward modification of the centraliser argument used in
  the proof of Theorem~\ref{thm:visible} establishes $\cS = \ZZ^d$.
\end{proof}

Alternatively, the structure of the last proof can be summarised in
stating that
\[
    1 \, \xrightarrow{\quad} \, \cS \,
    \xrightarrow{\; \mathrm{id} \; } \,
    \cR \, \xrightarrow{\,\, \psi \,\, } \, \GL (d,\ZZ) \,
    \xrightarrow{\quad} \, 1
\]
is a short exact sequence where $\cH \defeq \varphi(\GL(d,\ZZ))$ is a
subgroup of $\cR$ with $\cH \simeq \GL (d,\ZZ)$ and the property that
$\varphi \circ \psi$ acts as the identity on $\cH$.  Outside the class
of Erd\H{o}s $\cB$-free lattice systems, the centraliser can indeed be
a finite-index extension of $\cG$, as is known from one-dimensional
examples of Toeplitz type \cite{Keller-pc}, but we do not consider
this case below.

\begin{example}
  Let $k\in\NN$ be fixed and consider the lattice $\ZZ^d$. Then,
  $\cB = \{ p^k : p\in\cP \}$ leads to the $k$-free lattice points in
  $d$ dimensions, which is Erd\H{o}s for $k \ts d \geqslant 2$. They
  have been studied from various angles in \cite{BMP,PH,BH}, and
  provide a natural extension of our motivating example from
  Section~\ref{sec:motiv}.
  
  In particular, one always obtains a measure{\ts}-theoretic dynamical
  system $\bigl( \XX^{}_{V_{\cB}}, \ZZ^d, \nu^{}_{\mathrm{M}} \bigr)$
  with pure point diffraction and dynamical spectrum, as in
  Remark~\ref{rem:gen-vis}. The topological entropy is
  $\log (2) / \zeta (k d)$, while the measure{\ts}-theoretic entropy
  with respect to the natural patch frequency (or Mirsky) measure
  $\nu^{}_{\mathrm{M}}$ always vanishes \cite{PH}, as it must in view
  of the fact that the dynamical spectrum of
  $\bigl( \XX^{}_{V_{\cB}}, \ZZ^d, \nu^{}_{\mathrm{M}} \bigr)$ is pure
  point. \exend
\end{example}

The result of Theorem~\ref{thm:max-extend} can more generally be
looked at as follows. Let $\bigl( \XX,\ZZ^d \bigr)$ be a faithful
shift, with centraliser $\cS (\XX)$ and normaliser $\cR (\XX)$, and
assume that $h^{}_{\nts M} \in \Aut (\XX)$ for some
$M\in \GL (d,\ZZ)$, where $h^{}_{\nts M}$ is the mapping defined in
Eq.~\eqref{eq:hm-def}.  Let $T^{}_{n}$ with $n\in\ZZ^d$ denote the
shift by $n$ as before, so
$\bigl( T^{}_{n} x \bigr)_{m} = x^{}_{m+n}$, and consider an element
$H\in \cR (\XX)$ with $M=\psi (H)$. Then, for any $\ell\in\ZZ^d$, one
obtains the commutative diagram
\begin{equation}\label{eq:cd-3}
  \begin{CD}
  \XX @> H >> \XX @> \, h_{\nts M^{-1}} \, >> \XX \\
  @V T^{}_{\ell} VV     @V T^{}_{\nts M\ell} VV @VV T^{}_{\ell} V \\
  \XX @> H >> \XX @> \, h_{\nts M^{-1}} \, >> \XX
\end{CD}
\end{equation}
from Eq.~\eqref{eq:H-action}, where $h^{}_{\nts M^{-1}}\in \Aut (\XX)$
by assumption. In particular,
$h^{}_{\nts M^{-1}}\circ H \in \Aut (\XX)$ commutes with the shift
action, hence is a block map by the CHL theorem.

At this point, the structure of the centraliser enters crucially, and
one obtains an interesting consequence as follows, where
$\psi \! : \, \cR (\XX) \xrightarrow{\quad} \Aut (\ZZ^d)$ is the
homomorphism from above.

\begin{coro}\label{coro:affine}
  Let\/ $\bigl( \XX,\ZZ^d \bigr)$ be a faithful subshift with trivial
  centraliser.  Consider an element\/ $H\in\cR (\XX)$ with\/
  $h^{}_{\psi(H)} \in \Aut(\XX)$. Then, $H$ is an affine mapping and\/
  $h^{}_{\psi(H)}\in\cR (\XX)$.
\end{coro}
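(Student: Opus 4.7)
The plan is to exploit the commutative diagram~\eqref{eq:cd-3} to force a decomposition of $H$ in which the ``rotation part'' is exactly $h^{}_{\psi(H)}$. Write $M = \psi(H)$, and note first that $h^{}_{\nts M^{-1}}$ automatically belongs to $\Aut(\XX)$: it is the inverse of $h^{}_{\nts M}$ in $\Aut(\XX)$, as follows immediately from \eqref{eq:hm-def}, and the hypothesis places $h^{}_{\nts M}$ in $\Aut(\XX)$. Consequently, the composition $h^{}_{\nts M^{-1}} \circ H$ is a well-defined homeomorphism of $\XX$.

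Next, I would read off the full horizontal composite of \eqref{eq:cd-3}: for every $\ell \in \ZZ^d$, one has $(h^{}_{\nts M^{-1}} \circ H) \circ T^{}_\ell = T^{}_\ell \circ (h^{}_{\nts M^{-1}} \circ H)$. Thus $h^{}_{\nts M^{-1}} \circ H$ commutes with the whole $\ZZ^d$-action and therefore lies in $\cS(\XX)$. By the triviality hypothesis, $\cS(\XX) = \cG \simeq \ZZ^d$, so there exists $t \in \ZZ^d$ with $h^{}_{\nts M^{-1}} \circ H = T^{}_t$. Composing on the left with $h^{}_{\nts M}$ then yields the decomposition $H = h^{}_{\nts M} \circ T^{}_t$, which is precisely the action on $\XX$ of the affine element $(t, M) \in \ZZ^d \rtimes \GL(d, \ZZ)$ from Section~\ref{sec:prelim}. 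This settles the first claim that $H$ is affine.

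For the second claim, $\cR(\XX)$ is a group that contains both $H$ and the shift $T^{}_t \in \cG \subseteq \cS(\XX) \subseteq \cR(\XX)$. Hence it also contains $H \circ (T^{}_t)^{-1} = h^{}_{\nts M} = h^{}_{\psi(H)}$, as claimed. I do not expect any real obstacle here: the whole argument is essentially a one-line extraction from diagram \eqref{eq:cd-3} together with the centraliser rigidity. The only mildly delicate point is ensuring that $h^{}_{\nts M^{-1}}$ preserves $\XX$ so that the composition $h^{}_{\nts M^{-1}} \circ H$ is legitimate inside $\Aut(\XX)$, which is immediate from the hypothesis.
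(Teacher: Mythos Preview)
Your proof is correct and follows essentially the same route as the paper: use diagram~\eqref{eq:cd-3} to see that $h^{}_{M^{-1}}\circ H$ commutes with the shift, invoke the trivial centraliser to identify it with some $T^{}_{t}$, and then read off both conclusions from $H = h^{}_{M}\circ T^{}_{t}$. The only cosmetic discrepancy is the labelling of the affine element: in the paper's convention $(s,A)(y)=Ay+s$, the set action of $H$ is $U\mapsto -Mt + M(U)$, so the corresponding affine element is $(-Mt,M)$ rather than $(t,M)$; this does not affect the argument.
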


\begin{proof}
  From the diagram \eqref{eq:cd-3}, with $M=\psi (H)$, we know that
  $h^{}_{\nts M^{-1}} \circ H \in \cS (\XX)$, so this mapping equals
  $T^{}_{n}$ for some $n\in\ZZ^d$.  This means
  $H = h^{}_{\nts M} \circ T^{}_{n}$, which acts as
\[
    (Hx)^{}_{m} \, = \, x^{}_{M^{-1} m + n} \ts .
\]
The equivalent formulation with sets, due to the relation
$h^{}_{\nts M} \circ T^{}_{n} = T^{}_{\nts Mn} \circ h^{}_{\nts M}$,
now reads $H(U) = -Mn + M(U)$, which is affine.

Finally, since $H\in\cR (\XX)$, one also has
$h^{}_{\nts M} = H \nts \circ T^{}_{-n} \in \cR (\XX)$.
\end{proof}

The occurrence of affine mappings in the context of $\ZZ^d$-actions,
as a sign of some degree of rigidity, is also known from
\cite[Thm.~1.1]{KS}, and will become important later.

\section{Number theoretic setting}\label{sec:NT}

The concept of a $\cB$-free lattice system from
Definition~\ref{def:B-free-lattice} is only one possibility to
generalise the one{\ts}-dimensional notion. For another, combining
methods from the theory of aperiodic order \cite{TAO} with classic
results from elementary and algebraic number theory \cite{BS,N}, one
may start with the treatment of square{\ts}-free integers in algebraic
number fields as in \cite{CV}, and simplify and generalise it as
follows.

Let $\KK$ be an algebraic number field of degree $d$, so
$[\KK : \QQ] = d < \infty$. Let $\cO$ be the ring of integers in
$\KK$, which is the unique maximal order in $\KK$, such as $\ZZ$ for
$\KK=\QQ$, $\ZZ[\ii]$ for $\KK = \QQ(\ii)$, or $\ZZ[\sqrt{2}\,]$ for
$\KK = \QQ(\sqrt{2}\,)$. Let
$\iota\!:\, \cO\xrightarrow{\quad} \RR^{r} \! \times \nts \nts
\CC^{s}$ be the mapping defined by
\[
    z\mapsto \bigl(\rho^{}_{1}(z),\dots,\rho^{}_{r}(z),
    \sigma^{}_{1}(z),\dots,\sigma^{}_{\nts s}(z) \bigr) ,
\]
where
$\rho^{}_{1},\dots,\rho^{}_{r}$ are the real embeddings of $\KK$ into
$\CC$, while $\sigma^{}_{1},\dots,\sigma^{}_{\nts s}$ arise from the
complex embeddings 
of $\KK$ into $\CC$ by choosing exactly one embedding
from each pair of complex conjugate ones (in particular, we have
$d=r + 2s$). Clearly, depending on $\KK$, one either takes
$\rho^{}_{1}$ or $\sigma^{}_{1}$ to be the identity.

Now, if $\fb$ is a non-zero ideal of $\cO$, its absolute \emph{norm}
is defined by $\No (\fb) \defeq [ \cO : \fb]$. In fact, for any of the
above choices, the image $\iota(\fb)$ is a lattice in
$\RR^{r} \times \CC^{s} \simeq \RR^d$, and the absolute norm of $\fb$
is precisely the index of the sublattice $\iota(\fb)$ in the lattice
$\iota(\cO)$, and thus a finite number. The map $\iota$ is usually
called the \emph{Minkowski embedding}\/ of $\cO$\/; see
\cite{BS,N,TAO} for details.

To continue, let $\KK$ be an algebraic number field of degree $d$, and
$\cO$ its ring of integers, with Minkowski embedding
$\vG= \iota (\cO) \subset\RR^{d}$.  Let
$\cB = \{ \fb_{i} \mid i \in \NN \}$ be an infinite set of non-trivial
ideals of $\cO$, where $\cB$ is assumed to be \emph{primitive} in the
sense that $\fb_{i} \supseteq \fb_{j}$ implies\/ $i=j$. Let
$\vG_{i} =\iota (\fb_i)$ and consider
$V^{}_{\cB} \defeq \vG \setminus \bigcup_{i\in\NN}
\vG_{i}\subset\RR^{d}$, which thus is the Minkowski embedding of
$\cO \setminus \bigcup_{i\in\NN} \fb_{i}$, and define its hull as the
orbit closure
\[
     \XX^{}_{\cB} \, = \, \overline{\vG + V^{}_{\cB}} 
\]
in the local topology, so $\XX^{}_{\cB}$ is compact as in our 
previous examples.
   
\begin{definition}   
  In the setting just explained, the topological dynamical system
  $\bigl( \XX^{}_{\cB}, \vG \bigr)$ is called an \emph{algebraic}
  $\cB$-\emph{free lattice system}, or simply an \emph{algebraic}
  $\cB$-\emph{free system}.
   
  Such a system is called \emph{Erd\H{o}s} when the $\fb_{i}$ are
  pairwise coprime (meaning $\fb_{i}+\fb_{j}=\cO$ for all $i\neq j$)
  and satisfy
\[
    \sum_{i=1}^{\infty}\myfrac{1}{\No (\fb_{i})} \, < \, \infty\ts .
\] 
\end{definition}

As before, we call a set $U \subset \vG$ \emph{admissible} for $\cB$
when, for every $\fb\in\cB$, the set $U$ meets at most $\No (\fb) - 1$
cosets of $\vG^{}_{\fb} \defeq \iota (\fb)$ in $\vG$, that is, misses
at least one. All admissible subsets of $\vG$ once again constitute a
subshift, denoted by $\AAA$, which contains $\XX^{}_{\cB}$ by
construction.

\begin{prop}\label{prop:locator-2}
  Assume that\/ $\bigl( \XX^{}_{\cB}, \vG \bigr)$ is Erd\H{o}s, and
  let\/ $P$ and\/ $Q$ be disjoint finite subsets of\/ $\vG$. Then, the
  following properties are equivalent.
\begin{enumerate}\itemsep=2pt   
\item The locator set\/  $L(P,Q) \defeq
   \{ t \in \vG : t+P \subset V^{}_{\cB} \text{ and } t + Q \subset
   \vG \setminus V^{}_{\cB}\}$ has positive natural density.
\item $L (P,Q)   \ne \varnothing$.
\item $P$ is admissible for\/ $\cB$.
\end{enumerate}
\end{prop}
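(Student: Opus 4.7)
The plan is to imitate the proof of Proposition~\ref{prop:locator} line by line, with the integer lattice $\ZZ^d$ replaced by $\vG=\iota(\cO)$ and each sublattice $b\ts\ZZ^d$ replaced by $\vG^{}_{\fb}=\iota(\fb)$. The implication $(1) \Rightarrow (2)$ is immediate, and $(2) \Rightarrow (3)$ follows from the observation that admissibility is translation-invariant: if $t+P \subset V^{}_{\cB}$, then $t+P$ is admissible, hence so is $P$. The entire content is therefore $(3) \Rightarrow (1)$.

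For this, I would first set $m=\card(P)$ and isolate the finite set of ideals where $P$ interacts with the cosets nontrivially, namely
\[
    S^{}_{1} \, \defeq \, \bigl\{ \fb \in \cB : \min \bigl(
    \card ( P \bmod \vG^{}_{\fb} ),\, \No(\fb) -1 \bigr)  < m \bigr\} .
\]
For each $q\in Q$, I would then pick a distinct ideal $\fb^{}_{q}\in\cB\setminus S^{}_{1}$ of sufficiently large norm so that $q$ represents a coset of $\vG^{}_{\fb_q}$ missed by $P$, and set $S = S^{}_{1}\cup\{\fb^{}_{q}:q\in Q\}$. Using admissibility, choose for every $\fb\in\cB$ a representative $p^{}_{\fb}$ of a coset missed by $P$, with $p^{}_{\fb_q}=q$ for $\fb^{}_{q}\in S\setminus S^{}_{1}$. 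The pairwise coprimality of the ideals (which by the standard ideal CRT is \emph{stronger} than what Fact~\ref{fact:cosets} demands, since $\fb_i+\fb_j=\cO$ forces $\vG^{}_{\fb_i}+\vG^{}_{\fb_j}=\vG$ directly) then yields a solution $t^{}_{0}\in\vG$ of the finite system $t \equiv -p^{}_{\fb}\bmod \vG^{}_{\fb}$ for $\fb\in S$, with the full solution set being the coset $t^{}_{0}+\vL$, where $\vL=\bigcap_{\fb\in S}\vG^{}_{\fb}=\iota\bigl(\prod_{\fb\in S}\fb\bigr)$.

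Next I would define, for large $n$,
\[
   \vT^{}_{n} \, = \, (t^{}_{0}+\vL)\cap
   \bigl\{ t\in\vG : t\not\equiv -p \bmod \vG^{}_{\fb}
   \text{ for all } \fb\in R^{}_{n},\, p\in P \bigr\},
\]
where $R^{}_{n}=\{\fb\in\cB\setminus S:\No(\fb)\leqslant n\}$. Since every $\fb\in R^{}_{n}$ is coprime (as an ideal) to each ideal in $S$, it is coprime to their product, hence $\vL+\vG^{}_{\fb}=\vG$; so Fact~\ref{fact:cosets} applies and $\vT^{}_{n}$ is a finite union of cosets of an intersection sublattice, with well-defined density
\[
   \dens (\vT^{}_{n}) \, = \, [\vG:\vL]^{-1}\!\!
   \prod_{\fb\in R^{}_{n}} \Bigl( 1-\frac{m}{\No(\fb)}\Bigr) .
\]
Each factor is positive by the choice of $S^{}_{1}$, and the Erd\H{o}s condition $\sum 1/\No(\fb^{}_{i})<\infty$ guarantees that the infinite product over $\cB\setminus S$ converges to a positive number. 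Taking the decreasing intersection $\vT^{}_{\infty}=\bigcap_{n}\vT^{}_{n}$ produces a set of positive natural density consisting of $t\in\vG$ with $t+P\subset V^{}_{\cB}$ and $t+Q\subset\vG\setminus V^{}_{\cB}$, which is exactly $L(P,Q)$.

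The main conceptual point to watch is that in the algebraic setting the relevant form of coprimality is coprimality of ideals ($\fb^{}_{i}+\fb^{}_{j}=\cO$), not coprimality of the indices $\No(\fb^{}_{i})$ and $\No(\fb^{}_{j})$; the latter can easily fail (for instance, distinct primes lying above the same rational prime share an index), while the former directly gives $\vG^{}_{\fb_i}+\vG^{}_{\fb_j}=\vG$ and hence the CRT needed for both the existence of $t^{}_{0}$ and the density computation. Apart from this bookkeeping, every other step transfers verbatim from the proof of Proposition~\ref{prop:locator}.
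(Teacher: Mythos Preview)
Your proposal is correct and follows the paper's own proof essentially line by line: the same finite exceptional set $S=S^{}_{1}\cup S^{}_{2}$, the same CRT step producing the base coset $t^{}_{0}+\iota\bigl(\prod_{\fb\in S}\fb\bigr)$, the same sets $R^{}_{n}$ and $\vT^{}_{n}$, and the same product formula for the density. Your closing remark that one must use coprimality of \emph{ideals} (giving $\vG^{}_{\fb_i}+\vG^{}_{\fb_j}=\vG$ directly) rather than coprimality of the indices $\No(\fb^{}_{i})$ is a useful clarification that the paper leaves implicit.
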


\begin{proof}
  This is a variant of the proof of Proposition~\ref{prop:locator},
  where $(1) \Rightarrow (2) \Rightarrow (3)$ is again clear. We thus
  need to establish $(3) \Rightarrow (1)$.

  Let $m = \card (P)$ and choose $S^{}_{1}$ as the set of all ideals
  $\fb \in \cB$ such that $\card (P \bmod \vG^{}_{\fb}) < m$ or
  $\No (\fb) \leqslant m$. As before, $S^{}_{1}$ is finite.  Then, for
  $S^{}_{2}$, select distinct ideals from $\cB \setminus S^{}_{1}$,
  denoted by $S^{}_{2} = \{ \fb^{}_{q} : q \in Q \}$, where, without
  loss of generality, we may select ideals $\fb^{}_{q}$ of
  sufficiently large absolute norm such that $P$ does not meet the
  coset modulo $\fb^{}_{q}$ represented by $q$. Then, consider
  $S = S^{}_{1} \cup S^{}_{2}$, which is still finite. As all ideals
  $\fb\in\cB$ can be viewed as lattices $\vG^{}_{\fb}$ via the
  Minkowski embedding, we can again invoke the CRT to find an element
  $t^{}_{0} \in \vG$ so that
\[
     t^{}_{0}  \equiv - p^{}_{\fb} \bmod \vG^{}_{\fb} \ts ,
     \quad  \text{for all $\fb \in S$} \ts ,
\]
where $p^{}_{\fb}$ is a representative of a coset modulo
$\vG^{}_{\fb}$ that is missing in $P$, which we know to exist.  Due to
our construction of $S^{}_{2}$, we may choose $p^{}_{\fb^{}_{q}} = q$
for all $q\in Q$, wherefore the above congruences actually comprise
$t^{}_{0} \equiv - q \bmod \vG^{}_{\fb^{}_{q}}$ for all $q\in Q$.  By
pairwise coprimality of the $\fb\in\cB$, we see that the set of all
solutions is the coset $t^{}_{0} + G$, where $G$ is the Minkowski
embedding of the ideal $\prod_{\fb\in S} \fb$.  For any $t$ from this
coset, $t+P$ avoids the zero coset of $\vG^{}_{\fb}$ for all
$\fb\in S$, while no element of $t+Q$ is in $V^{}_{\cB}$, so
$t+Q \subset \vG \setminus V^{}_{\cB}$.

Now, for $n\in\NN$, consider the set
$R_{n} \defeq \{ \fb \in \cB\setminus S :\No (\fb) \leqslant n \}$.
For a suitable $n^{}_{0}$ and then all $n\geqslant n^{}_{0}$, the set
$R_n$ is non-empty and finite. Next, define
\[
   \vT_n \, \defeq \, ( t^{}_{0} + G )
  \cap \{ t \not\equiv - p \bmod \vG^{}_{\fb} \text{ for all }
  \fb \in R_n \text{ and all } p \in P \} \ts .
\]
Then, $\vT_n$ is once again a finite union of translates of a
non-trivial intersection lattice and thus a set of positive natural
density, the latter being given by
\[
   \dens ( G )  \prod_{\fb \in R_n} \Bigl( 1 -
   \frac{\card (P)}{\No (\fb)} \Bigr) .
\]
As in the previous case, the product is convergent as $n\to\infty$ by
the Erd\H{o}s condition, so
$\vT_{\infty} \defeq \bigcap_{n\geqslant n^{}_{0}} \vT_n$ is a subset
of $\vG$ of positive density such that, for any $t\in \vT_{\infty}$,
we have $t+P \subset V^{}_{\cB}$ and
$t+Q \subset \vG \setminus V^{}_{\cB}$.
\end{proof}

\begin{theorem}\label{thm:alg-symm}
  An Erd\H{o}s algebraic\/ $\cB$-free system\/
  $\bigl( \XX^{}_{\cB}, \vG \bigr)$ satisfies\/ $\XX^{}_{\cB} = \AAA$
  and is hereditary. Moreover, it has minimal symmetry group, which
  means\/ $\cS = \cS (\XX^{}_{\cB}) = \vG \simeq \ZZ^d$. Moreover, its
  extended symmetry group is of the form\/
  $\cR (\XX^{}_{\cB}) = \cS \rtimes \cH$, where\/ $\cH$ is isomorphic
  to a non-trivial subgroup of\/ $\GL (d,\ZZ)$.
\end{theorem}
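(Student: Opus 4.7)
The plan is to adapt the arguments of Theorems~\ref{thm:visible} and~\ref{thm:max-extend} to the algebraic setting, now using Proposition~\ref{prop:locator-2} in place of Proposition~\ref{prop:locator}. The identity $\XX^{}_{\cB} = \AAA$ follows by the same finite-approximation argument as in the lattice case: given $S \in \AAA$ and $n \in \NN$, set $P^{}_{n} = S \cap B^{}_{n}(0)$ and $Q^{}_{n} = (\vG \cap B^{}_{n}(0)) \setminus P^{}_{n}$, which are finite with $P^{}_{n}$ admissible. By Proposition~\ref{prop:locator-2}, there exists $t^{}_{n} \in L(P^{}_{n}, Q^{}_{n})$, and $V^{}_{\cB} - t^{}_{n}$ then agrees with $S$ on $B^{}_{n}(0)$, whence $V^{}_{\cB} - t^{}_{n} \to S$ in the local topology. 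The hereditary property is immediate from $\XX^{}_{\cB} = \AAA$, since any subset of an admissible set misses every coset the latter misses.

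For the centraliser, I would reproduce the rigidity argument of Theorem~\ref{thm:visible} verbatim, substituting the pairwise coprime ideals $\fb \in \cB$ for the rational primes $p \in \cP$ and the sublattices $\vG^{}_{\fb} = \iota(\fb)$ for $p \ts \ZZ^{d}$. Any $S \in \cS$ is a block map by CHL; one first shows that $S$ sends the singleton $\{0\}$ to a translate of itself by exhibiting, for each candidate pair of distinct block centres with nonzero difference $n$, an ideal $\fb \in \cB$ with $n \notin \vG^{}_{\fb}$ and a forced non-admissible image produced via the CRT. After composing with a translation so that $S(\{0\}) = \{0\}$, the same CRT-plus-maximality argument applied to blocks with $0$-valued centre forces $S(U) \subseteq U$ for all $U \in \XX^{}_{\cB}$, and hence $S = \mathrm{id}$. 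Thus $\cS = \vG \simeq \ZZ^{d}$.

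For the normaliser, I define $\psi \! : \, \cR(\XX^{}_{\cB}) \to \Aut(\vG) \simeq \GL(d, \ZZ)$ by the conjugation action on $\cG$, and $\varphi(M) = h^{}_{\!M}$ as in Eq.~\eqref{eq:hm-def}. Set $\cH \defeq \psi(\cR(\XX^{}_{\cB}))$, so $\cH$ is precisely the subgroup of those $M \in \GL(d, \ZZ)$ with $M(V^{}_{\cB}) = V^{}_{\cB}$. Exactly as in the proof of Theorem~\ref{thm:max-extend}, the restriction of $\varphi$ to $\cH$ splits the short exact sequence
\[
   1 \, \xrightarrow{\quad} \, \cS \,
   \xrightarrow{\; \mathrm{id} \;} \, \cR(\XX^{}_{\cB}) \,
   \xrightarrow{\,\, \psi \,\,} \, \cH \,
   \xrightarrow{\quad} \, 1 \ts ,
\]
yielding $\cR(\XX^{}_{\cB}) = \cS \rtimes \cH$. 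Non-triviality of $\cH$ is automatic: the antipodal map $-\mathrm{id} \in \GL(d, \ZZ)$ preserves every $\vG^{}_{\fb}$, since $-\fb = \fb$ for any ideal of $\cO$, hence preserves $V^{}_{\cB}$. The induced $h^{}_{-\mathrm{id}}$ is non-trivial, since $h^{}_{-\mathrm{id}}(V^{}_{\cB} - t) = V^{}_{\cB} + t \neq V^{}_{\cB} - t$ for any $t \neq 0$.

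The main obstacle is the separation claim underlying the rigidity step: for any $n \in \vG \setminus \{0\}$, one must produce $\fb \in \cB$ with $n \notin \vG^{}_{\fb}$. Writing $n = \iota(z)$ with $z \in \cO \setminus \{0\}$, any $\fb \in \cB$ containing $z$ must contain the principal ideal $(z)$; by pairwise coprimality of the elements of $\cB$, the product of all such $\fb$ is still contained in $(z)$, and since $\No((z))$ is finite, only finitely many $\fb \in \cB$ can contain $z$. Plenty of suitable $\fb$ therefore remain, and the CRT-based contradiction of the lattice case goes through unchanged.
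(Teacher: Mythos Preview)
Your treatment of $\XX^{}_{\cB}=\AAA$, hereditariness, the centraliser, and the separation claim is essentially the paper's argument and is fine (modulo a harmless slip: the product of the $\fb$'s containing $z$ \emph{contains} $(z)$, not the other way round; the norm bound still gives finiteness).

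There is, however, a genuine gap in your normaliser argument. You define $\cH=\psi\bigl(\cR(\XX^{}_{\cB})\bigr)$ and then assert that ``$\cH$ is precisely the subgroup of those $M\in\GL(d,\ZZ)$ with $M(V^{}_{\cB})=V^{}_{\cB}$'', after which the splitting via $\varphi$ is supposed to go through ``exactly as in the proof of Theorem~\ref{thm:max-extend}''. But that identification is the whole point, and you have not proved it. In Theorem~\ref{thm:max-extend} the splitting worked because \emph{every} $M\in\GL(d,\ZZ)$ satisfies $M(b\ts\ZZ^d)=b\ts\ZZ^d$ and hence $h^{}_{\nts M}(\XX^{}_{\cB})=\XX^{}_{\cB}$; no such statement is available here, since a general $M\in\Aut(\vG)$ need not send the ideal lattices $\vG^{}_{\fb}$ to themselves. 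For the section $\varphi|_{\cH}$ to land in $\cR(\XX^{}_{\cB})$ you must show, for each $H\in\cR(\XX^{}_{\cB})$ with $M=\psi(H)$, that $h^{}_{\nts M}\in\Aut(\XX^{}_{\cB})$; nothing in your outline does this.

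The paper closes this gap as follows. For $H\in\cR(\XX)$ with $M=\psi(H)$ one a priori only knows that $h^{}_{\nts M^{-1}}$ maps $\XX$ onto some subshift $\YY=h^{}_{\nts M^{-1}}(\XX)$ of $\{0,1\}^{\vG}$, possibly different from $\XX$. The composite $\chi=h^{}_{\nts M^{-1}}\circ H:\XX\to\YY$ intertwines the shift actions and is thus a block map by CHL. Since $\YY$ inherits from $\XX$ the admissibility description (with cosets transported by $M^{-1}$) and hereditariness, the Mentzen-type rigidity argument you used for the centraliser applies to $\chi$ as a map \emph{between} these two subshifts and forces $\chi=T^{}_{n}$ for some $n\in\vG$. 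Then $\YY=\chi(\XX)=T^{}_{n}\XX=\XX$, so $h^{}_{\nts M}\in\Aut(\XX)$ after all, $H=h^{}_{\nts M}\circ T^{}_{n}$ is affine, and the splitting follows. You need this extra step (or an equivalent one) before you can invoke $\varphi$ as a section.
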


\begin{proof}
  While $\XX^{}_{\cB} \subseteq \AAA$ is clear,
  $\AAA\subseteq\XX^{}_{\cB}$ is shown exactly as in
  Theorem~\ref{thm:max-extend}, this time on the basis of
  Proposition~\ref{prop:locator-2}, so $\XX^{}_{\cB} = \AAA$, and this
  shift is hereditary.  Then, the statement on the centraliser
  follows, once again, from a straight-forward modification of the
  argument used in the proof of Theorem~\ref{thm:visible}.

  Let $\XX = \XX^{}_{\cB}$, and consider an arbitrary $H\in\cR (\XX)$.
  Here, we have $M\defeq \psi (H) \in \Aut (\vG)$ in analogy to our
  previous cases, and the diagram \eqref{eq:cd-3} changes to
\begin{equation}\label{eq:cd-4}
 \begin{CD}
  \XX @> H >> \XX @> \, h_{\nts M^{-1}} \, >> \YY \\
  @V T^{}_{\ell} VV     @V T^{}_{\nts M\ell} VV @VV T^{}_{\ell} V \\
  \XX @> H >> \XX @> \, h_{\nts M^{-1}} \, >> \YY
\end{CD}
\end{equation}
where $\YY \defeq h^{}_{\nts M^{-1}} (\XX)$, while $T^{}_{\ell}$ with
$\ell\in\vG$ is the shift in this case. Note that both $\XX$ and $\YY$
are subshifts of $\{ 0,1 \}^{\vG}$, on which $T^{}_{n}$ and
$h^{}_{\nts M}$ are still well defined, and it is clear that
$\varnothing \in \XX \cap \YY$. This new diagram is again commutative,
so $\chi = h^{}_{\nts M^{-1}} \circ H$ intertwines the shift actions
on $\XX$ and $\YY$. Consequently, by the CHL theorem, $\chi$ is a
block map.

The space $\YY$ inherits important properties from $\XX$, such as its
characterisation through admissibility (now defined via the images of
cosets in $\XX$ under $h^{}_{\nts M^{-1}}$) as well as being
heredi\-tary.  After minor modifications, the arguments from the proof
of Theorem~\ref{thm:visible} now show that $\chi$ must be a shift map,
hence equal to $T^{}_{n}$ for some $n\in\vG$.  But
$T^{}_{n} \ts \XX = \XX$, whence $H\in \Aut (\XX)$ now implies
\[
    \YY \, = \, h^{}_{\nts M^{-1}} (\XX) \, = \,
    h^{}_{\nts M^{-1}} ( H \XX ) \, = \, T^{}_{n} \ts \XX
    \, = \, \XX \ts ,
\]
and we are back to the situation of
Corollary~\ref{coro:affine}. Consequently, $H$ is an affine mapping,
with $H = h^{}_{\nts M} \circ T^{}_{n}$, and
$h^{}_{\nts M} \in \cR (\XX)$.  We thus have a short exact sequence
\[
    1 \, \xrightarrow{\quad} \, \cS (\XX)
    \, \xrightarrow{\;\, \text{id} \; \, } \, \cR (\XX)
    \, \xrightarrow{\;\, \psi \; \, } \,
    \cH \defeq \psi \bigl( \cR (\XX) \bigr)
    \, \xrightarrow{\quad} \, 1
\] 
with $\cH$ a subgroup of $\Aut (\vG)$. In particular, we get
$\cR (\XX) = \cS (\XX) \rtimes \cH$ as claimed.

To see that $\cH$ is non-trivial, we observe that the unit group
$\cO^{\times}$ is non-trivial (it contains at least the elements
$\pm 1$) and, via the Minkowski embedding, isomorphic to a subgroup of
$\Aut (\vG) \simeq \GL (d,\ZZ)$.  Each element of $\cO^{\times}$ maps
any ideal $\fb$ onto itself, so the corresponding mapping induced by
the Minkowski embedding is a bijection of $V^{}_{\cB}$, and thus gives
rise to an extended symmetry. Further elements emerge from non-trivial
Galois automorphisms of $\KK$, such as complex conjugation when $\KK$
is a totally complex extension of $\QQ$. Consequently, the claim on
the nature of $\cH$ is clear.
\end{proof}

\begin{remark}
  The systems covered by Theorem~\ref{thm:alg-symm} show many
  similarities with the $k$-free lattice points discussed earlier. In
  particular, they have positive topological entropy, which can in
  principle be determined from their description as weak model sets of
  maximal density in the sense of \cite{BHS}. The spectral properties
  will reflect the comments made in Remark~\ref{rem:gen-vis}.  We
  leave details to the interested reader.  \exend
\end{remark}

Unlike the situation in Theorem~\ref{thm:max-extend}, the group $\cH$
will generally \emph{not} be $\Aut (\vG) \simeq \GL (d, \ZZ)$, as we
shall see in Section~\ref{sec:Gaussian} below. In particular, for
$M\in \Aut(\vG)$ and $\fb\in\cB$, it need not be true that
$M(\fb) = \fb$ or $M (V^{}_{\cB}) = V^{}_{\cB}$. The following
negative result, obtained via methods from analytic number theory, was
pointed out to us by Valentin Blomer~\cite{Blomer}.

\begin{fact}\label{fact:Blomer}
  Let\/ $M \in \GL (2, \ZZ)\setminus \mathrm O(2, \ZZ)$. Then, there
  exist Gaussian primes\/ $\rho \in \ZZ[\ii] \simeq \ZZ^2$ such that a
  positive proportion of square-free Gaussian integers\/
  $\alpha \in \ZZ[\ii]$ satisfies\/ $\rho^2\mid M\alpha$, and\/
  $M\alpha$ is thus not square-free in\/ $\ZZ[\ii]$. \qed
\end{fact}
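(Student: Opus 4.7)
The plan is to reduce the statement to a sieve for square-free values in a sublattice of $\ZZ[\ii]$. Since $\det M = \pm 1$, the map $M$ is a $\ZZ$-module automorphism of $\ZZ[\ii] \simeq \ZZ^2$, and for any Gaussian prime $\rho$ the set
\[
   L^{}_{\rho} \, \defeq \, M^{-1} \bigl( \rho^2 \ZZ[\ii] \bigr)
   \, = \, \bigl\{ \alpha \in \ZZ[\ii] :
   \rho^2 \mid M\alpha \bigr\}
\]
is a full-rank sublattice of $\ZZ[\ii]$ of index $\No (\rho)^2$. Since the square-free Gaussian integers have positive natural density $1/\zeta^{}_{\QQ(\ii)} (2)$, the claim is equivalent to producing a single Gaussian prime $\rho$ for which the natural density of square-free elements of $L^{}_{\rho}$, measured within $\ZZ[\ii]$, is strictly positive.

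For any full-rank sublattice $L \subseteq \ZZ[\ii]$, the Gaussian analogue of Sylvester's sieve (or Ekedahl's theorem on square-free values of lattice points) yields
\[
   \lim_{R\to \infty} \myfrac{\card \{\alpha \in L : |\alpha| \leqslant R,\,
   \alpha \text{ square-free}\}}{\pi R^2} \, = \, \frac{1}{[\ZZ[\ii]:L]}
   \prod_{\pi} \Bigl( 1 - \frac{[\ZZ[\ii] : L + \pi^2 \ZZ[\ii]]}{\No(\pi)^2} \Bigr) ,
\]
the product running over Gaussian primes up to units, with convergence ensured by $\sum_{\pi} \No(\pi)^{-2} < \infty$. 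This limit is strictly positive precisely when no prime $\pi$ satisfies $L \subseteq \pi^2 \ZZ[\ii]$, since such an inclusion would make every $\alpha \in L$ divisible by $\pi^2$. Comparing indices, any inclusion $L^{}_{\rho} \subseteq \pi^2 \ZZ[\ii]$ forces $\No (\pi)^2 \mid \No (\rho)^2$; and if one restricts $\rho$ to have prime rational norm $p$, the inclusion must be an equality, so the only possible obstruction is $M (\pi^2 \ZZ[\ii]) = \rho^2 \ZZ[\ii]$ for some prime $\pi$ above $p$. Hence it suffices to exhibit a Gaussian prime $\rho$ for which no such coincidence occurs.

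This is where the hypothesis $M \notin \mathrm O (2, \ZZ)$ enters decisively. The group $\mathrm O (2, \ZZ)$ is precisely the image in $\GL (2, \ZZ)$ of the $\ZZ$-linear maps of $\ZZ[\ii]$ that come from multiplication by a Gaussian unit, possibly composed with complex conjugation. For $M \in \mathrm O (2, \ZZ)$, the image $M (\pi^2 \ZZ[\ii])$ is always of the form $\sigma^2 \ZZ[\ii]$ with $\sigma \in \{ \pi, \bar\pi \}$ up to units, so the obstruction is systematic -- consistent with such $M$ actually preserving square-freeness. For $M \notin \mathrm O (2, \ZZ)$, however, the conjugate $N \defeq M^{-1} J M$ of the multiplication-by-$\ii$ operator $J$ is not equal to $\pm J$, so $M (\pi^2 \ZZ[\ii])$ generically fails to be $\ii$-invariant, hence is not an ideal of $\ZZ[\ii]$ at all, let alone of the form $\sigma^2 \ZZ[\ii]$. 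A counting argument over Gaussian primes of increasing norm then shows that only a density-zero family of $\rho$ can coincidentally satisfy $\rho^2 \ZZ[\ii] = M (\pi^2 \ZZ[\ii])$ for some $\pi$ of matching norm, so admissible $\rho$ exist in abundance and the sieve above delivers the claim.

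I expect the main obstacle to lie in this last quantitative avoidance step, namely the rigorous verification that when $M$ is not of Gaussian-unit type, the set of primes $\rho$ with $\rho^2 \ZZ[\ii] = M (\pi^2 \ZZ[\ii])$ for some $\pi$ is a genuinely thin subset of all Gaussian primes. This is the point at which honest analytic number theory is required: estimates on primes represented by the binary quadratic forms whose coefficients are determined by $M$ and $\pi$, together with control over how rarely the image $M (\pi^2 \ZZ[\ii])$ accidentally meets the rigid shape of a squared principal ideal. Presumably this is where Blomer's expertise was decisive.
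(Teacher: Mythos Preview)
The paper does not prove this Fact. It is stated with a bare \qed\ and explicitly attributed to Blomer via private communication, with the remark that it is ``obtained via methods from analytic number theory''; the paper then immediately says that ``a simpler statement of purely algebraic nature exists, which suffices for our purposes'', and develops Lemma~6.1 and Corollary~6.2 as substitutes adequate for Theorem~6.4. So there is no proof in the paper to compare your proposal against, and your closing remark that ``this is where Blomer's expertise was decisive'' matches precisely what the authors themselves say.

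On the substance of your outline: the reduction is correct. For $\rho$ of prime rational norm $p$, the sublattice $L_\rho = M^{-1}(\rho^2\ZZ[\ii])$ has index $p^2$, and the density of square-free elements in it is positive if and only if $L_\rho \not\subseteq \pi^2\ZZ[\ii]$ for each prime $\pi$ above $p$; by index comparison this amounts to $M$ failing to permute the pair $\{\pi^2\ZZ[\ii],\bar\pi^2\ZZ[\ii]\}$. Two caveats are worth recording. First, your displayed Euler product is not literally valid when $L$ is only a $\ZZ$-sublattice rather than an ideal: the local factors at $\pi$ and $\bar\pi$ lying over the same rational prime dividing $[\ZZ[\ii]:L]$ are not independent, so the product does not factor cleanly there. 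The correct route is to note that $p^2\ZZ[\ii]\subseteq L_\rho$, decompose $L_\rho$ into cosets of $p^2\ZZ[\ii]$, and observe that each such coset is either entirely inside or entirely outside $\pi^2\ZZ[\ii]$; this recovers the same qualitative positivity criterion. Second, and more seriously, the ``avoidance'' step you flag is the entire content of the Fact, and your paragraph about it (``a counting argument\ldots\ shows that only a density-zero family of $\rho$'') is a plausibility sketch, not an argument. You have correctly isolated the gap, but not closed it --- which, again, is consistent with the paper's own decision to quote the result rather than prove it.
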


As we shall see in the next section, a simpler statement of purely
algebraic nature exists, which suffices for our purposes and permits
various generalisations.

\section{Power-free Gaussian and Eisenstein
  integers}\label{sec:Gaussian}

From now on, we shall need some classic results on quadratic number
fields, which can all be drawn from \cite[Chs.~14 and 15]{HW} or from
\cite{Zagier}. To keep things simple, we only consider rings of
integers that are Euclidean, so that we can easily work with primes
and prime factorisation (up to units) rather than with ideals; see
\cite{BBN} for various generalisations in our context.

As an example of an algebraic $\cB$-free system that is Erd\H{o}s, let
us view $\ZZ^{2}$ as $\ZZ[\ii]$, the ring of Gaussian integers, and
consider, for some fixed $2\leqslant k \in \NN$, the subset of
$k$-free elements (to be defined below).  $\ZZ[\ii]$ is the maximal
order in the quadratic field $\QQ (\ii)$, and is Euclidean.  The
\emph{unit group} of $\ZZ[\ii]$ is
\[
  \ZZ[\ii]^{\times} \, = \, \{ 1, \ii, -1, -\ii \}
  \, \simeq \, C^{}_{4} \ts .
\]
If $\cP$ denotes the set of rational primes as before, the Gaussian
primes \cite[Thm.~252]{HW} can be represented by
\[
  \cP^{}_{\mathrm{G}} \, = \, \{ 1\! + \! \ii \}
  \cup \{ p \in \cP : p \equiv 3 \bmod 4 \}
  \cup \{ \pi, \bar{\pi} : \pi \bar{\pi} = p \in
  \cP \text{ with } p \equiv 1 \bmod 4 \} \ts ,
\]
where $\bar{\cdot}$ is complex conjugation.  The three subsets
correspond to the ramified prime, where $(1+\ii)^2 = 2 \ts \ii$, the
inert primes, and the (complex) splitting primes, respectively.
Within the last, by slight abuse of notation, we assume one
representing pair for each $p$ to be selected, for instance by
demanding $\pi$ to lie in the positive quadrant. This way, the
representation of the primes is unique, and prime factorisation works
up to units.

Now, for any integer $k\geqslant 2$, we can define
$V^{(k)}_{\mathrm{G}}$ as the set of Gaussian integers that are not
divisible by the $k\ts$th power of any Gaussian prime. This is the set
of $k$-free Gaussian integers.  Figure~\ref{fig:vis+gau} contains an
illustration of the set $V^{(2)}_{\mathrm{G}}\! $, which was also used
in \cite{CV}. We begin with a geometric symmetry consideration of
$V^{(k)}_{\mathrm{G}}$ as follows.

\begin{lemma}\label{lem:gauss-group}
  Let\/ $k\geqslant 2$ be fixed and let\/
  $A \! : \, \ZZ[\ii] \xrightarrow{\quad} \ZZ[\ii]$ be a\/
  $\ZZ$-linear bijection that maps\/
  $V \nts\nts = V^{(k)}_{\mathrm{G}} \!$ into itself,
  $A(V) \subseteq V\!$.  Then, $A$ is a bijection of\/
  $\ts U \nts\nts = \ZZ[\ii]^{\times}$, and of\/ $V\!$ as well. As
  such, it is of the form\/ $A (x) = \epsilon \ts\ts \sigma (x)$
  with\/ $\epsilon \in U$ and\/
  $\sigma \in \{ \mathrm{id}, \overline{\ts\cdot\ts} \}$, that is,
  $\epsilon$ is a unit and\/ $\sigma$ a field automorphism of\/
  $\QQ (\ii)$.

  Together, these mappings form a group, which is the stabiliser of\/
  $V\!$ in\/ $\GL (2,\ZZ)$, denoted by\/ $\ts\stab (V)$. The latter,
  for any\/ $k\geqslant 2$, is the dihedral group\/
  $D_4 \simeq C_4 \rtimes\ts C_2$ of order\/ $8$, which is the
  symmetry group of the square and as such a maximal finite subgroup
  of\/ $\GL(2,\ZZ)$.
\end{lemma}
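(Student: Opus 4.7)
The inclusion $D_4 \subseteq \stab(V)$ is direct: each of the eight maps $x\mapsto \epsilon\ts\sigma(x)$ with $\epsilon\in U$ and $\sigma\in\{\mathrm{id},\overline{\ts\cdot\ts}\}$ is a $\ZZ$-linear bijection of $\ZZ[\ii]$ that preserves $k$-freeness, since multiplication by a unit preserves every $\pi$-adic valuation, while complex conjugation fixes inert primes, sends $1+\ii$ to its associate $1-\ii$, and swaps the conjugate pair $\{\pi,\bar\pi\}$ above each splitting rational prime. Reading off the composition table yields $C_4\rtimes C_2 \simeq D_4$, which coincides with $\mathrm O(2,\ZZ)$, a maximal finite subgroup of $\GL(2,\ZZ)$.

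For the reverse inclusion I take $A\in\stab(V)$ and set $\alpha = A(1)$, $\beta = A(\ii)$. Then $\{\alpha,\beta\}$ is a $\ZZ$-basis of $\ZZ[\ii]$, so $|\det A|=1$ and $\gcd(\alpha,\beta)=1$ in $\ZZ[\ii]$. For every odd rational prime $p$, the integer $p^{k-1}$ satisfies $v_\pi(p^{k-1})\leqslant k-1<k$ at every Gaussian prime $\pi$, so $p^{k-1}\in V$, and $A(p^{k-1})=p^{k-1}\alpha\in V$ forces $v_\pi(\alpha)=0$ for every Gaussian prime $\pi$ above $p$. The analogous argument applied to $A(p^{k-1}\ts\ii)=p^{k-1}\beta$ yields $v_\pi(\beta)=0$. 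Ranging over all odd $p$ leaves only the ramified prime $1+\ii$ as a possible Gaussian prime divisor of $\alpha$ or $\beta$, so $\alpha = u(1+\ii)^{a}$ and $\beta = w(1+\ii)^{b}$ with $u,w\in U$, $a,b\geqslant 0$, and coprimality forces $\min(a,b)=0$.

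The hard part will be ruling out $\max(a,b)>0$. After composing $A$ with a $D_4$-element if necessary, I may assume $a\geqslant 1$ and $b=0$, so $\beta = w\in U$. I then pick a Gaussian prime $\pi$ above an odd splitting rational prime $p$ large enough that $\pi^{k}\nmid w - \ii\ts u(1+\ii)^{a}$; such $\pi$ exist because $w - \ii\ts u(1+\ii)^{a}\neq 0$, since otherwise $\beta = \ii\alpha$ and $\{\alpha,\beta\}$ would fail to generate $\ZZ[\ii]$. In the canonical ring isomorphism $\ZZ[\ii]/\pi^{k}\ZZ[\ii]\simeq\ZZ/p^{k}\ZZ$, both $\alpha$ and $\beta$ are units, so the congruence $x\alpha+\beta\equiv 0\pmod{\pi^{k}}$ has a unique solution $x\equiv x_{0}\pmod{p^{k}}$ with $x_{0}\in\ZZ$; by the choice of $\pi$, this residue $x_{0}$ differs from both the image of $-\ii$ modulo $\pi^{k}$ and that modulo $\bar\pi^{k}$. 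A Chinese-remainder argument in the spirit of Proposition~\ref{prop:locator-2} then produces $x\in\ZZ$ with $x\equiv x_{0}\pmod{p^{k}}$ such that $z := x+\ii$ is $k$-free in $\ZZ[\ii]$, since admissibility at the remaining Gaussian primes $\sigma$ can be secured by standard further residue restrictions on $x$. But $A(z)=x\alpha+\beta\equiv 0\pmod{\pi^{k}}$ is not $k$-free, contradicting $A(V)\subseteq V$. Consequently $\alpha,\beta\in U$, and the only $\ZZ$-bases of $\ZZ[\ii]$ contained in $U$ are the eight pairs corresponding to the elements of $D_4$, so $A\in D_4$. Since $D_4$ is a group contained in $\stab(V)$, the claimed bijections of $U$ and $V$ follow.
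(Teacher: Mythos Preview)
Your opening reduction agrees with the paper: the coprimality trick with $p^{k-1}$ shows that $\alpha,\beta$ are units times a power of $1+\ii$, and your use of $\gcd(\alpha,\beta)=1$ to force $\min(a,b)=0$ is a clean substitute for the paper's determinant bound. The divergence is in how you dispose of the remaining case $a\geqslant 1$, $b=0$.

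The paper stays entirely elementary here. It uses $\det(A)=\imag\bigl(\overline{A(1)}\,A(\ii)\bigr)$ to show that unimodularity forces the pair of exponents to be $(0,0)$ or, up to symmetry, $(0,1)$. The single surviving case $A(1)=1$, $A(\ii)=\kappa(1+\ii)$ is then excluded by computing $A(1\pm\ii)=1\pm\kappa(1+\ii)$: for every $\kappa\in U$, one of these has field norm $5$, contradicting the already-established coprimality with odd primes. No sieve or CRT is needed.

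Your route via a splitting prime $\pi$ and a construction of $z=x+\ii$ with $x\in\ZZ$ works in principle but has two loose ends. First, your choice of $\pi$ only imposes $\pi^k\nmid\beta-\ii\alpha$; to conclude that $x_0$ also avoids the image of $-\ii$ modulo $\bar\pi^{\ts k}$ you need the companion condition $\pi^k\nmid\beta+\ii\alpha$ (conjugate and substitute), which is available for large $p$ since $\beta+\ii\alpha\neq 0$ as well. Second, the appeal to Proposition~\ref{prop:locator-2} is not quite apt: that result sieves translations in the full lattice $\vG$, whereas you are sieving the \emph{rational} integer $x$ to control $k$-freeness of $x+\ii$ in $\ZZ[\ii]$. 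This does succeed (inert primes impose no constraint on $x$, the ramified prime is handled by $x$ even, and each odd split $q\neq p$ removes two residues modulo $q^k$), but it deserves its own sentence rather than a citation. With those two patches your argument is complete; its merit is that it handles all $a\geqslant 1$ uniformly, while the paper's finish is a two-line computation instead of an infinite-product sieve.
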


\begin{proof}
  Clearly, any $A$ of the form $A(x) = \epsilon \ts\ts \sigma (x)$
  maps units to units, and $V$ onto itself.  Conversely, if $A$
  preserves $U$ and $A(1)= \epsilon$, bijectivity of $A$ implies
  $A(\ii) = \ii\ts \epsilon$ or $A(\ii) = -\ii\ts \epsilon$, and
  $\ZZ\ts$-linearity of $A$ determines the image of any
  $x\in \ZZ[\ii]$ from here. In the first case, this gives
  $A(x) = \epsilon \ts x$, and $A(x) = \epsilon \ts \bar{x}$ in the
  second.  It thus remains to show that any $\ZZ$-linear bijection $A$
  of $\ZZ[\ii]$ with $A(V)\subseteq V$ must preserve units.

  Let us begin with a simple but powerful observation on the
  coprimality structure of the $k$-free Gaussian integers. Consider
  $x\in V\!$, with $\gcd^{}_{\mathrm{G}} (x,p)=1$ for every odd
  rational prime, where the $\gcd^{}_{\mathrm{G}}$ in $\ZZ[\ii]$ is
  unique up to units. Then, $p^{\ell} x \in V$ for any
  $1\leqslant \ell < k$, hence also
  $A(p^{k-1} x) = p^{k-1} A(x) \in V\!$, which implies
  $\gcd^{}_{\mathrm{G}} (A(x), p) = 1$.  This argument cannot be
  extended to $p=2=-\ii\ts (1+\ii)^2$, which is ramified.
  Nevertheless, we may conclude that
\[
   A (U) \, \subseteq \, U \cup (1+\ii)\ts U \cup \dots
   \cup (1+\ii)^{k-1} U \ts ,
\]
where we now need to exclude all but the first set on the
right-hand side.

Observe that, when $A$ is a mapping as specified, then so is the
mapping $A'$ defined by $A' (x) = \epsilon A(x)$, for any
$\epsilon \in U$.  We may thus assume $A(1) = (1+\ii)^m$ for some
$0\leqslant m \leqslant k-1$ without loss of generality, matched by
$A(\ii) = \kappa \ts (1+\ii)^n$ with $\kappa \in U$ and
$0\leqslant n \leqslant k-1$. Now, from $\ZZ$-linearity in conjunction
with bijectivity on $\ZZ[\ii]$, we know that $\det (A) =\pm 1$, where
\[
  \det (A) \, = \, \imag \bigl( \,
  \overline{\nts \nts A(1)} A(\ii) \bigr)
  \ts = \, \imag \bigl(\kappa \ts (1-\ii)^m (1+\ii)^n \bigr).
\]    
When $n\geqslant m$, this gives
$\det (A) = 2^m \imag \bigl( \kappa \ts (1+\ii)^{n-m} \bigr)$, which
cannot be unimodular unless $m=0$, so $A(1)=1$ and
$\det (A) = \imag \bigl( \kappa \ts (1+\ii)^n \bigr)$.

Observing $(1+\ii)^2 = 2 \ii$, an analogous argument now also excludes
$n\geqslant 2$, so $A(\ii)= \kappa$ or $A(\ii)=\kappa\ts (1+\ii)$. In
the first case, we get $A(\ii)=\ii$ or $A(\ii)=-\ii$ from bijectivity,
and $A$ is also a bijection on $U$. When $A(\ii)=\kappa\ts (1+\ii)$,
we get $A(1\pm \ii)= A(1) \pm A(\ii) = 1\pm \kappa\ts
(1+\ii)$. Irrespective of which unit $\kappa$ is, one of the images is
an element of norm $5$, where the norm refers to the \emph{field
norm}\footnote{Note that the absolute norm of an ideal in $\ZZ[\ii]$,
  which is always principal, agrees with the field norm of its
  generating element in this case.} of $x\in\QQ (\ii)$, which is
defined by $N(x) = x \bar{x}$ as usual.  But such a norm value is
impossible by our previous coprimality argument, and thus rules out
this case.

When $m > n$, a completely analogous chain of arguments gives $n=0$
and $m=1$, which is then once again ruled out by the coprimality
result. This leaves us with the mappings that preserve $U$ as claimed.
\end{proof}  

This result has the following immediate consequence, which can be seen
as a simplified (and purely algebraic) case of Fact~\ref{fact:Blomer}.

\begin{coro}\label{coro:bad-prime}
  Let\/ $k\geqslant 2$ be a fixed integer and\/
  $V\nts\nts =V^{(k)}_{\mathrm{G}}$ the set of\/ $k$-free Gaussian
  integers. If\/ $A\in\GL(2,\ZZ)\setminus \stab (V)$, there exists a
  Gaussian prime\/ $\rho$ and an element\/ $w\in V$ such that\/
  $\rho^k$ divides\/ $A (w)$.

  No such prime can be inert, and it cannot be ramified when\/ $k$ is
  even.
\end{coro}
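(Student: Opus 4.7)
The plan is to obtain the main existence statement as an immediate contrapositive of Lemma~\ref{lem:gauss-group}, and then rule out inert primes and the ramified prime for even $k$ by reducing modulo a suitable rational integer and invoking the unimodularity of $A$.

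For the first part, suppose $A\in\GL(2,\ZZ)\setminus\stab(V)$. Lemma~\ref{lem:gauss-group} precludes $A(V)\subseteq V$, for otherwise $A$ would be forced to lie in $\stab(V)$. So one can pick $w\in V$ with $A(w)\notin V$, and the defining property of $V=V^{(k)}_{\mathrm{G}}$ immediately furnishes a Gaussian prime $\rho$ with $\rho^k\mid A(w)$. This is essentially a one-line deduction.

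For the two exclusion statements, the key observation is that whenever $\rho^k$ equals, up to a unit of $\ZZ[\ii]$, a \emph{rational} integer $N$, divisibility by $\rho^k$ in $\ZZ[\ii]$ coincides with coordinate-wise divisibility by $N$ in $\ZZ^{2}$, via the identification $\ZZ[\ii]/N\ZZ[\ii]\simeq (\ZZ/N\ZZ)^{2}$. This hypothesis is met in exactly the two cases at hand: for an inert rational prime $p\equiv 3\bmod 4$, one may take $N=p^{k}$; for $\rho=1+\ii$ with $k=2m$ even, one has $(1+\ii)^{k}=2^{m}\ii^{m}$, so $N=2^{m}$ works. In each such case, $\det A=\pm 1$ is a unit modulo $N$, hence the reduction of $A$ lies in $\GL(2,\ZZ/N\ZZ)$ and is invertible there. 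Consequently, $A(w)\equiv 0\bmod N$ forces $w\equiv 0\bmod N$, which translates back to $\rho^{k}\mid w$ in $\ZZ[\ii]$ and contradicts $w\in V$.

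The only step that requires any care, and the one that explains why the exclusion does \emph{not} extend to the split primes or to $\rho=1+\ii$ with $k$ odd, is precisely the passage from divisibility by $\rho^k$ in $\ZZ[\ii]$ to a $\ZZ^{2}$-coordinate congruence. This reduction only works when $\rho^k$ is rational up to a unit; once that is in hand, the remaining argument is routine linear algebra modulo $N$.
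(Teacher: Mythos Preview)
Your proof is correct and follows essentially the same route as the paper's. Both derive the existence statement as the contrapositive of Lemma~\ref{lem:gauss-group}, and both exclude inert and (for even $k$) ramified primes by observing that $A$ reduces to an invertible matrix modulo the relevant rational integer, so it preserves the zero coset; your version is slightly more explicit in spelling out why $(1+\ii)^{k}$ is a rational integer times a unit precisely when $k$ is even.
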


\begin{proof}
  By Lemma~\ref{lem:gauss-group}, we know that $A\in \GL (2,\ZZ)$ with
  $A(V)\subseteq V$ must be an element of $\stab (V)$, from which the
  first statement is clear.

  The matrix $A$ is unimodular modulo $p^k$ for any rational prime
  $p$.  As such, it cannot change the number of cosets of $p^k \ZZ^2$,
  and maps the zero coset onto itself. This rules out the case that
  $\rho\in \cP^{}_{\mathrm{G}}$ is inert.

  If $\rho = 1 +\ii$, we have $N(\rho)=2$, and the same argument
  applies to $\rho$ when $k$ is even.
\end{proof}

Under the identification of $\ZZ[\ii]$ with $\ZZ^2$, let us now
consider the subshifts
\[
    \XX^{(k)}_{\mathrm{G}} \, \defeq \,
    \overline{\ZZ^2 + V^{(k)}_{\mathrm{G}}} \ts ,
\]    
which share many properties with our previous examples. In particular,
they once again satisfy $\XX^{(k)}_{\mathrm{G}} = \AAA$, with the
appropriate notion for admissibility, and are hereditary.  Further,
they have pure point spectrum with trivial topological point spectrum,
and the sets $V^{(k)}_{\mathrm{G}}$ are generic elements for the
corresponding patch frequency (or Mirsky) measure, the latter defined
via any averaging sequence of growing balls centred at $0$.

\begin{prop}\label{prop:G-one}
  Let\/ $\bigl( \XX^{(k)}_{\mathrm{G}} , \ZZ^2 \bigr)$ with fixed\/
  $k\geqslant 2$ be the faithful shift generated by the\/ $k$-free
  Gaussian integers.  Then, its centraliser is trivial, $\cS = \ZZ^2$,
  while the normaliser\/ $\cR$ consists of affine transformations
  only. In particular, $\cR$ contains a subgroup of the form\/
  $\ZZ^2 \rtimes D_4$, where\/
  $D_4 = \stab \bigl( V^{(k)}_{\mathrm{G}} \bigr)$ is the group from
  Lemma~\textnormal{\ref{lem:gauss-group}}.
\end{prop}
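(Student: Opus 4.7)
The plan is to realise $V^{(k)}_{\mathrm{G}}$ as an instance of the Erd\H{o}s algebraic $\cB$-free setup from Section~\ref{sec:NT}, taking $\cB = \bigl\{ (\rho^k) : \rho \in \cP^{}_{\mathrm{G}} \bigr\}$, a primitive family of pairwise coprime principal ideals of $\ZZ[\ii]$. The convergence condition is immediate from the classification of Gaussian primes, since $\No ((\rho^k)) = N(\rho)^k \geqslant p^k$ for the unique rational prime $p$ lying below $\rho$, and at most two Gaussian primes lie above each such $p$, giving
\[
    \sum_{\rho \in \cP^{}_{\mathrm{G}}} \myfrac{1}{\No ((\rho^k))}
    \, \leqslant \, \myfrac{1}{2^k} \, + \, 2 \sum_{p \in \cP}
    \myfrac{1}{p^k} \, < \, \infty
\]
for every $k\geqslant 2$. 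Under the Minkowski embedding, which here is just the standard identification of $\ZZ[\ii]$ with $\ZZ^2$, the abstract set $V^{}_{\cB}$ coincides with $V^{(k)}_{\mathrm{G}}$.

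With this framework in place, Theorem~\ref{thm:alg-symm} directly yields $\cS = \ZZ^2$ together with the decomposition $\cR = \cS \rtimes \cH$ for some non-trivial subgroup $\cH$ of $\GL(2,\ZZ)$. Corollary~\ref{coro:affine}, applied in this centraliser-trivial situation, then ensures that every $H \in \cR$ is an affine mapping of the form $h^{}_{\nts M} \circ T^{}_{n}$ with $M = \psi (H) \in \cH$ and $n \in \ZZ^2$.

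It remains to identify a copy of $D_4$ inside $\cH$. By Lemma~\ref{lem:gauss-group}, any $M \in D_4 = \stab \bigl( V^{(k)}_{\mathrm{G}} \bigr)$ satisfies $M \bigl( V^{(k)}_{\mathrm{G}} \bigr) = V^{(k)}_{\mathrm{G}}$, so the induced homeomorphism $h^{}_{\nts M}$ permutes the shift orbit $\ZZ^{2} + V^{(k)}_{\mathrm{G}}$ and hence preserves its closure $\XX^{(k)}_{\mathrm{G}}$; thus $h^{}_{\nts M} \in \cR$ with $\psi (h^{}_{\nts M}) = M$, giving $D_4 \subseteq \cH$ and the claimed subgroup $\ZZ^2 \rtimes D_4$ of $\cR$. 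No serious obstacle is anticipated: the proof reduces to checking that the general Erd\H{o}s framework applies to this example and then reading off from Lemma~\ref{lem:gauss-group} that $D_4$ sits inside $\cH$. The more delicate question of whether $\cH$ strictly exceeds $D_4$, which would require upgrading the set-theoretic content of Corollary~\ref{coro:bad-prime} into a statement about $\XX^{(k)}_{\mathrm{G}}$, belongs to the subsequent identification of $\cR$.
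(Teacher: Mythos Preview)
Your proposal is correct and follows essentially the same route as the paper: invoke Theorem~\ref{thm:alg-symm} for $\cS=\ZZ^2$ and the semi-direct product structure, use Corollary~\ref{coro:affine} (via the diagram from that theorem's proof) for the affine form of normaliser elements, and then read off $D_4\subseteq\cH$ from Lemma~\ref{lem:gauss-group} by continuity on the dense orbit. Your explicit verification of the Erd\H{o}s convergence condition is a detail the paper leaves implicit, but otherwise the arguments coincide.
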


\begin{proof}
  The claim on the symmetries is a consequence of our general result
  in Theorem~\ref{thm:alg-symm}, which asserts that the centraliser is
  trivial, so $\cS = \ZZ^2$.
   
  For the extended symmetries, we are once more in the situation of
  the diagram \eqref{eq:cd-4} from the proof of
  Theorem~\ref{thm:alg-symm}.  Consequently, by
  Corollary~\ref{coro:affine}, each element of the normaliser is an
  affine mapping, namely an element of the affine lattice group
  $\ZZ^2 \rtimes \GL(2,\ZZ)$.
  
  That $\ZZ^2 \rtimes D_4$ is a subgroup of $\cR$ follows from
  Lemma~\textnormal{\ref{lem:gauss-group}}. Indeed, since the
  $\ZZ^2$-orbit of $V^{(k)}_{\mathrm{G}}$ is dense in
  $\XX^{(k)}_{\mathrm{G}}$ by construction and each element of $\cR$
  is continuous, any $M \in D_4$ maps $\XX^{(k)}_{\mathrm{G}}$ onto
  itself, as does any affine mapping $(t,M)$ with $t\in\ZZ^2$ and
  $M\in D_4$.
\end{proof}

It remains to complete the determination of $\cR$, which leads to the
following result.

\begin{theorem}\label{thm:Gauss}
  The symmetry group and the extended symmetry group of\/
  $\bigl(\XX^{(k)}_{\mathrm{G}},\ZZ^2 \bigr)$, with fixed\/
  $k\geqslant 2$, are given by\/ $\cS = \ZZ^2$ and\/
  $\cR = \cS \rtimes D_4$, respectively, where\/
  $D_4 = \stab (V) = C_4 \rtimes C_2$ is the symmetry group of the
  square, and as such a maximal finite subgroup of\/ $\GL (2,\ZZ)$. In
  particular, $C_4 \simeq \ZZ [\ii]^{\times}$, while\/ $C_2$ is the
  group of field automorphisms of\/ $\QQ(\ii)$, generated by complex
  conjugation.
\end{theorem}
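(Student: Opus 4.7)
The plan is to build on Proposition~\ref{prop:G-one}, which already identifies the centraliser $\cS = \ZZ^2$, shows that every element of $\cR$ is affine, and provides the inclusion $\ZZ^2 \rtimes D_4 \subseteq \cR$ with $D_4 = \stab (V)$ and $V = V^{(k)}_{\mathrm{G}}$. It therefore remains to establish the reverse inclusion, for which it suffices to show that any $M \in \GL (2, \ZZ)$ whose associated map $h^{}_{\nts M}$ preserves $\XX^{(k)}_{\mathrm{G}} = \AAA$ must already lie in $\stab (V)$.

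Assume, for contradiction, that some $M \in \GL (2, \ZZ) \setminus D_4$ induces such a homeomorphism. Since the $\ZZ^2$-orbit of $V$ is dense in $\XX^{(k)}_{\mathrm{G}}$ and $h^{}_{\nts M}$ is continuous, $M(V)$ must be admissible. Corollary~\ref{coro:bad-prime} supplies a Gaussian prime $\rho$ (splitting, or ramified with $k$ odd) and an element $w \in V$ with $\rho^k \mid M(w)$. Setting $\vG' \defeq M^{-1}(\rho^k \ZZ[\ii])$, a sublattice of $\ZZ[\ii]$ of index $N(\rho)^k$, one has $w \in \vG' \cap V$; since $w$ is $k$-free, $\vG' \not\subseteq \pi^k \ZZ[\ii]$ for \emph{any} Gaussian prime $\pi$, as otherwise $w$ itself would be divisible by $\pi^k$.

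The crux of the proof is then the claim that every coset of $\vG'$ in $\ZZ[\ii]$ meets $V$. Granting it, $M(V)$ hits every coset of $\rho^k \ZZ[\ii]$, contradicting the admissibility of $M(V)$ at $\rho$, and combining with Proposition~\ref{prop:G-one} gives $\cR = \cS \rtimes D_4$ as required. To prove the claim, fix a coset $c + \vG'$ and, for each Gaussian prime $\pi$, estimate the relative density inside $c + \vG'$ of those elements divisible by $\pi^k$. By the second isomorphism theorem, this density is either $0$ or equals $1 / [(\vG' + \pi^k \ZZ[\ii]) : \pi^k \ZZ[\ii]]$; it coincides with $1/N(\pi)^k$ whenever $\gcd (N(\rho), N(\pi)) = 1$, which holds for all but finitely many $\pi$, and it remains strictly below $1$ for every $\pi$ because $\vG' \not\subseteq \pi^k \ZZ[\ii]$. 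The Erd\H{o}s condition for $\ZZ[\ii]$, combined with a CRT-based inclusion-exclusion, then keeps the Euler-type product $\prod_{\pi} ( 1 - \text{density}_{\pi} )$ strictly positive, whence $(c+\vG') \cap V \ne \varnothing$.

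The main obstacle is precisely this coset-wise density estimate: it refines the reasoning behind Proposition~\ref{prop:locator-2} by demanding control over the intersections $\vG' \cap \pi^k \ZZ[\ii]$ inside the sublattice $\vG'$ rather than over $\pi^k \ZZ[\ii] \subset \ZZ[\ii]$, and one must separately handle the finitely many ``bad'' primes $\pi$ with $\vG' + \pi^k \ZZ[\ii] \ne \ZZ[\ii]$. Once this step is secured, the identifications $C_4 \simeq \ZZ[\ii]^{\times}$ (acting by unit multiplication) and $C_2 \simeq \Aut (\QQ(\ii))$ (generated by complex conjugation) complete the description of $D_4 = C_4 \rtimes C_2$ as stated in the theorem.
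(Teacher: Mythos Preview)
Your overall strategy is sound and leads to the same conclusion, but it follows a genuinely different route from the paper. Both arguments start from Proposition~\ref{prop:G-one} and Corollary~\ref{coro:bad-prime}, obtaining a prime $\rho$ and some $w\in V$ with $\rho^k\mid M(w)$. From there, the paper constructs a \emph{finite} admissible witness: it picks $n=N(\rho)^k$ points $z^{}_{1}=w,z^{}_{2},\ldots,z^{}_{n}$, with $z^{}_{2},\ldots,z^{}_{n}$ chosen from a coset $1+\cL$ (where $\cL$ is the product of the $k$-th powers of all primes of smaller norm), so that $A(S)$ meets every coset of $(\rho^k)$, and then adjusts one or two of the $z_i$ by adding $w$ to force admissibility of $S$ for $\rho$ and (in the split case) $\bar\rho$. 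This is a direct, elementary construction that avoids any density computation.

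Your approach instead uses the whole of $V$ as the witness: you set $\vG'=M^{-1}(\rho^k\ZZ[\ii])$ and argue that $V$ meets every coset of $\vG'$, whence $M(V)$ already fails admissibility at $\rho$. This is conceptually cleaner and explains \emph{why} no finite combinatorics are really needed, but the price is exactly the point you flag as the main obstacle: the Euler-product density estimate inside a coset of $\vG'$ is not covered by Proposition~\ref{prop:locator-2} as stated, and the treatment of the one or two ``bad'' primes $\pi$ dividing $N(\rho)$ requires an extra argument (one must check, for instance in the split case, that a coset $c+\vG'$ cannot be covered by one coset of $\rho^k\ZZ[\ii]$ together with one coset of $\bar\rho^k\ZZ[\ii]$; this uses that $w\in\vG'$ avoids both). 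Once those finitely many primes are handled, the remaining factors combine via the CRT exactly as in the locator-set arguments, and the product stays positive by the Erd\H{o}s condition. So your plan is correct, but the paper's finite-witness construction is shorter and sidesteps the coset-wise density analysis entirely.
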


\begin{proof}
  The role of $\ZZ^2 \rtimes D_4$ is clear from
  Proposition~\ref{prop:G-one}.  To complete the proof, we need to
  show that the only $\ZZ$-linear, bijective mappings of
  $\XX^{(k)}_{\mathrm{G}}$ onto itself are the ones we already know
  from Lemma~\ref{lem:gauss-group}.

  As in the case of $k$-free lattice points, now by
  Theorem~\ref{thm:alg-symm}, we have $\XX^{(k)}_{\mathrm{G}} = \AAA$,
  where $\AAA$ is the subshift that consists of all admissible subsets
  of $\ts V\!  =V^{(k)}_{\mathrm{G}}$. Here, $\nts V$ itself has the
  property that, for any $\pi\in\cP^{}_{\mathrm{G}}$, precisely the
  zero coset of the principal ideal $(\pi^k)$ is missing.
  
  To complete the proof, we have to show that no $\ZZ$-linear
  bijection of $\ZZ[\ii]\simeq \ZZ^2$ outside of $\stab (V)$ can map
  $\AAA$ into itself.  So, let $A\in\GL(2,\ZZ) \setminus \stab
  (V)$. Then, by Corollary~\ref{coro:bad-prime}, there is a
  $\rho\in\cP^{}_{\mathrm{G}}$ and an element $w\in V$ such that
  $\rho^k | A(w)$. Set $n=N(\rho)^k$ and $z^{}_{1} = w$. We will now
  choose Gaussian integers $z^{}_{2}, \ldots , z^{}_{n}$ such that the
  set $S=\{z^{}_{1}, z^{}_{2}, \ldots , z^{}_{n}\}$ is admissible
  while $A(S)$ meets all cosets of the principal ideal $(\rho^k)$ in
  $\ZZ[\ii]$.

  To this end, choose a non-empty, finite set $P$ of Gaussian primes
  that contains all primes with $N(\pi) < N(\rho)$ but none with
  $N(\pi) = N(\rho)$. Concretely, when $N(\rho)>2$, we just take all
  primes of smaller norm, while we simply choose the inert prime $3$
  when $\rho = 1+\ii$. In any case, we have
  $P = \{ \pi^{}_{1}, \ldots , \pi^{}_{m} \}$ with $m\geqslant 1$ this
  way.

  Let $\cL = (\pi^{k}_{1}\cdots \pi^{k}_{m} )$, which is a sublattice
  of $\ZZ[\ii]$ of index $N(\pi^{}_{1} \cdots \pi^{}_{m} )^k$. Since
  this index is coprime with $n=N(\rho)^k$, we know from
  Fact~\ref{fact:cosets} that $\cL$ meets all cosets of
  $A^{-1}(\rho^k)$, and so does $1+\cL$, as this is just a
  translate. So, select numbers
  $z^{}_{2}, \ldots , z^{}_{n} \in 1 + \cL$ such that
  $A(z^{}_{2}), \ldots, A(z^{}_{n})$ meet all non-zero cosets of
  $(\rho^k)$, and set $S\defeq\{ z^{}_{1}, \ldots , z^{}_{n} \}$, with
  $z^{}_{1} =w$.  Clearly, the set $A(S)$ now meets \emph{all} cosets
  of $(\rho^k)$ and is thus \emph{not} admissible for $\rho$, so
  $A(S) \not\in \AAA$.  If we can show that $S$ itself is admissible
  for all Gaussian primes, we are done.

  Clearly, $S$ is admissible for all Gaussian primes $\pi$ with
  $N(\pi) > N(\rho)$ by cardinality. If $S$ meets all cosets of
  $(\rho^k)$, each of them must occur precisely once. Then, we modify
  $S$ via replacing $z^{}_{2}$ by $z^{\prime}_{2} = z^{}_{2} + w$,
  which reduces the number of cosets in $S$ by one, without reducing
  the number of cosets in $A(S)$ because $w$ is $k$-free with
  $A(w) \equiv 0 \bmod (\rho^k)$.

  If $\rho$ is a splitting prime, we also have to check $\bar{\rho}$,
  which is not an associate but has the same norm.  If $S$ meets all
  cosets of $(\bar{\rho}^k)$, we need to modify one element $z_i$ with
  $i>1$ to remove one coset from $S$.  Due to the previous step, we
  can neither use $z^{\prime}_{2}$ nor the other element of $S$ that
  is now congruent to $z^{\prime}_{2}$ modulo $(\rho^k)$. Since
  $n\geqslant 4$, there is at least one other element, $z^{}_{4}$ say,
  that can be replaced by $z^{}_{4} + w$. The new set $S$ is now
  admissible for all Gaussian primes of norm at least $N(\rho)$, while
  $A(S)$ still meets all cosets of $(\rho^k)$ and is thus not in
  $\AAA$.

  If $\rho \ne 1 +\ii$, it remains to see whether $S$ is now also
  admissible for all $\pi$ with $N(\pi)<N(\rho)$. By our construction
  with the lattice $\cL$, we know that, modulo $(\pi^k)$, all $z_i$
  are congruent to $w$, $1$ or $1+w$, so we meet at most $3$
  cosets. Since $N(\pi)^k \geqslant 2^k \geqslant 4$, we are good, and
  $S$ is admissible for all Gaussian primes, while $A(S)$ is not, and
  we have the desired contradiction.
\end{proof}

A completely analogous chain of arguments works for the ring of
Eisenstein integers, $\ZZ[\rho]$, where
$\rho = \ee^{2 \pi \ii/3} = \frac{1}{2}(-1 + \ii \sqrt{3}\,)$ is a
primitive third root of unity.  This is the ring of integers in the
imaginary quadratic field $\QQ (\rho)$, and is again Euclidean. The
unit group is
\[
  \ZZ[\rho]^{\times} \, = \, \{ (-\rho)^m :
  0 \leqslant m \leqslant 5 \} \, \simeq \, C_6 \ts , 
\]
while the Eisenstein primes \cite[Thm.~255]{HW}, up to units,
are represented by
\[
  \cP^{}_{\mathrm{E}} \, = \, \{ 1-\rho \} \cup
  \{ p \in \cP : p \equiv 2 \bmod{3} \} \cup
  \{ \pi, \bar{\pi} : \pi \bar{\pi} = p \in \cP
  \text{ with } p \equiv 1 \bmod{3} \} \ts ,
\]
again in the order of the ramified prime, where
$(1-\rho)^2 = - 3 \ts \rho$, the inert primes, and the complex
splitting primes, where one pair $(\pi, \bar{\pi})$ is selected for
each $p$ in the last set.

Defining $V^{(k)}_{\mathrm{E}}$ for fixed $k\geqslant 2$ as the set of
$k$-free Eisenstein integers, which we may either view as a subset of
the triangular lattice, which is $\ZZ[\rho]$, or (equivalently) as one
of the square lattice via
$\{ (m,n) \in \ZZ^2 : m + n \rho \in V^{(k)}_{\mathrm{E}} \}$, the
analogue of Lemma~\ref{lem:gauss-group} now gives mappings of the form
$A (x) = \epsilon \ts \sigma(x)$ with
$\epsilon \in \ZZ[\rho]^{\times}$ and
$\sigma \in \{ \mathrm{id}, \bar{\cdot}\ts \}$, hence the group
$D_6 \simeq C_6 \rtimes C_2$, which is another maximal finite subgroup
of $\GL(2,\ZZ)$, this time the one that is the symmetry group of the
regular hexagon.

Defining the subshifts
\[
    \XX^{(k)}_{\mathrm{E}} \, \defeq \,
    \overline{\ZZ[\rho] + V^{(k)}_{\mathrm{E}}} \ts ,
\]    
one obtains the following analogue of Theorem~\ref{thm:Gauss}, the
proof of which need not be repeated, as the method is the same.

\begin{theorem}\label{thm:Eisenstein}
  The symmetry group and the extended symmetry group of\/
  $\bigl(\XX^{(k)}_{\mathrm{E}}, \ZZ [\rho]\bigr)$, with fixed\/
  $k\geqslant 2$, are given by\/ $\cS = \ZZ[\rho]\simeq\ZZ^2$ and\/
  $\cR = \cS \rtimes D_6$, respectively, where\/
  $D_6 = C_6 \rtimes C_2$ is the symmetry group of the regular
  hexagon, and as such isomorphic to a maximal finite subgroup of\/
  $\GL (2,\ZZ)$. In particular, $C_6 = \ZZ [\rho]^{\times}$, while\/
  $C_2$ is the group of field automorphisms of\/ $\QQ(\rho)$,
  generated by complex conjugation.  \qed
\end{theorem}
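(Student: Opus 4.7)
The plan is to transport the proof of Theorem~\ref{thm:Gauss} to the Eisenstein setting step by step. Theorem~\ref{thm:alg-symm} and Corollary~\ref{coro:affine} already supply the trivial centraliser $\cS = \ZZ[\rho] \simeq \ZZ^2$ and force every element of $\cR$ to be an affine map of the form $(t, M)$ with $t \in \ZZ[\rho]$ and $M \in \GL (2, \ZZ)$. The content of the theorem therefore reduces to identifying the matrix part as $D_6$, namely to showing that any $M \in \GL (2, \ZZ)$ preserving $\XX^{(k)}_{\mathrm{E}}$ must already preserve $V \defeq V^{(k)}_{\mathrm{E}}$ and lie in $D_6 = \stab (V)$.

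The first step is the Eisenstein analogue of Lemma~\ref{lem:gauss-group}: any $\ZZ$-linear bijection $A$ of $\ZZ[\rho]$ with $A(V) \subseteq V$ has the form $A(x) = \epsilon\ts \sigma(x)$ with $\epsilon \in \ZZ[\rho]^{\times}$ and $\sigma \in \{\mathrm{id}, \bar{\cdot}\ts\}$. The coprimality trick transfers verbatim for every rational prime $p \ne 3$: if $x \in V$ is coprime to $p$, then $p^{k-1} x \in V$, hence $A(p^{k-1} x) \in V$, forcing $\gcd^{}_{\mathrm{E}} (A(x), p) = 1$. As $3 = -\rho\ts (1-\rho)^2$ is the only ramified rational prime, this yields $A(U) \subseteq \bigcup_{j=0}^{k-1} (1-\rho)^{j} U$, and a determinant computation in the basis $\{1, \rho\}$ rules out every positive power of $1-\rho$ exactly as in the Gaussian case, leaving the twelve maps that form $D_6 = C_6 \rtimes C_2$. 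The analogue of Corollary~\ref{coro:bad-prime} follows at once: for $A \in \GL(2,\ZZ) \setminus D_6$, there exist an Eisenstein prime $\pi^{}_{\star} \in \cP^{}_{\mathrm{E}}$ and $w \in V$ with $\pi^{k}_{\star} \mid A(w)$, where $\pi^{}_{\star}$ cannot be inert (by unimodularity of $A$ modulo $p^{k}$) nor, when $k$ is even, the ramified prime $1-\rho$.

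The main construction then mirrors that of Theorem~\ref{thm:Gauss}. With $n = \No (\pi^{}_{\star})^{k}$ and $z^{}_{1} = w$, one assembles $S = \{z^{}_{1}, \ldots, z^{}_{n}\}$ by picking the remaining $z^{}_{i}$ from $1 + \cL$, where $\cL$ is the principal ideal generated by the $k\,$th powers of all Eisenstein primes of norm strictly less than $\No (\pi^{}_{\star})$, or by $2^{k}$ alone when $\pi^{}_{\star} = 1-\rho$ itself. Since $[\ZZ[\rho] : \cL]$ is coprime to $n$, Fact~\ref{fact:cosets} allows the $z^{}_{i}$ to be steered so that $A(S)$ meets every coset of $(\pi^{k}_{\star})$, while $S \bmod \cL \subseteq \{w, 1, 1+w\}$ hits at most three cosets of each smaller prime power and therefore causes no admissibility failure at those primes. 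The two housekeeping adjustments from the Gaussian proof --- replacing one $z^{}_{i}$ by $z^{}_{i} + w$ to prevent $S$ from accidentally meeting every coset of $(\pi^{k}_{\star})$ itself, and an analogous correction for $\bar{\pi}^{}_{\star}$ in the splitting case --- transfer unchanged, since $n \geqslant 7^{k}$ in the splitting case and $n \geqslant 3^{k}$ otherwise leaves ample room. The resulting contradiction $S \in \AAA$ but $A(S) \notin \AAA$ shows $A \notin \cR$ and completes the reverse inclusion.

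The main obstacle is the determinant book-keeping in the Eisenstein analogue of Lemma~\ref{lem:gauss-group}: the oblique basis $\{1, \rho\}$ and the arithmetic of $(1-\rho)^2 = -3\rho$ (of norm $9$ rather than $4$) mean that the case distinctions on the powers of $1-\rho$ appearing in $A(1)$ and $A(\rho)$ must be redone carefully before invoking $|\det A| = 1$. All remaining steps --- the CRT-based coset construction, the density estimate, and the admissibility contradiction --- depend only on features of the Minkowski embedding and on coprimality of ideals in $\cO$, and thus transfer mechanically from $\ZZ[\ii]$ to $\ZZ[\rho]$.
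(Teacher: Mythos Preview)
Your proposal is correct and takes essentially the same approach as the paper, which for Theorem~\ref{thm:Eisenstein} simply states that ``the proof of which need not be repeated, as the method is the same'' and gives no further details. Your sketch is in fact more explicit than what the paper provides, correctly identifying the one place where genuine reworking is needed (the determinant and coprimality case analysis in the Eisenstein analogue of Lemma~\ref{lem:gauss-group}, with $(1-\rho)^2 = -3\rho$ replacing $(1+\ii)^2 = 2\ii$) and correctly noting that the remaining steps transfer mechanically.
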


So far, we have seen extension groups that are either all of
$\GL (2,\ZZ)$ (for the visible lattice points), or finite subgroups
thereof (for the $k$-free Gaussian or Eisenstein integers).  In
particular, the subshifts defined by the two examples illustrated in
Figure~\ref{fig:vis+gau} are clearly distinguished by different
extended symmetry groups.  At this point, it is a natural question
whether also infinite true subgroups of $\GL (2,\ZZ)$ may occur. To
this end, we take a look at the corresponding dynamical systems for
\emph{real} quadratic fields.

\section{Power-free integers in real quadratic
  number fields}\label{sec:real}

Let us first consider subsets of $\ZZ^2$ constructed by means of
$k$-free integers in $\ZZ [\sqrt{2}\, ]$, namely
\[
  V^{(k)}_{2} \, \defeq \, \bigl\{ (m,n) \in \ZZ^2 : m + n \sqrt{2}
  \text{ is $k$-free in } \ZZ[\sqrt{2}\, ] \bigr\} ,
\]
where $k\in\NN$ with $k\geqslant 2$ is fixed.  This set emerges via
the isomorphism between $\ZZ^2$ and the Minkowski embedding of
$\ZZ[\sqrt{2}\, ]$ into $\RR^2$; compare \cite[Sec.~3.4.1]{TAO}. Here,
with $\lambda \defeq 1+\sqrt{2}$ denoting the fundamental unit, the
unit group is
\[
  U \nts \, = \: \ZZ [\sqrt{2} \, ]^{\times} = \,
  \{ \ts \pm \lambda^n : n \in \ZZ \}
  \, \simeq \, C_2 \times C_{\infty} \ts ,
\]
where we also note that $\ZZ[\sqrt{2}\,] = \ZZ[\lambda]$. This ring is
again Euclidean, so we can work with unique prime decomposition up to
units.

The primes \cite[Thm.~256]{HW} can be represented as
\[
  \cP_{2} \, = \, \{ \sqrt{2} \, \} \cup
  \{ p\in \cP : p \equiv \pm 3 \bmod{8} \} \cup
  \{ \pi , \pi^{\star} : \pi \ts \pi^{\star} = p \in \cP
  \text{ with } p \equiv \pm 1 \bmod{8} \} \ts ,
\]
where $(\cdot)^{\star}$ denotes the mapping that is the unique
extension of $\sqrt{2} \mapsto - \sqrt{2}$ to a field automorphism of
the quadratic field $\KK=\QQ (\sqrt{2}\, )$. The relevant field norm
is then given by $N (x) = x \ts x^{\star}$, which means
$N (m+n \sqrt{2}\, ) = m^2 - 2 \ts n^2$ or, equivalently,
$N ( r + s \lambda) = r^2 + 2 \ts r s - s^2$. Once again, to gain a
representation modulo units (integers of norm $\pm 1$), one pair is
selected in the last set for each $p$. Note that the field norm can be
negative here, wherefore the absolute norm of a principal ideal now is
the absolute value of the field norm of a generating element.

For some of the calculations below, it is helpful to express
$\lambda^n$ in terms of $\lambda$ and $1$, for arbitrary $n\in
\ZZ$. Defining the bi-infinite sequence $(c^{}_{n})^{}_{n\in \ZZ}$ by
the recursion $c^{}_{n+1} = 2 \ts c^{}_{n} + c^{}_{n-1}$ with initial
conditions $c^{}_{0} = 0$ and $c^{}_{1} = 1$, one obtains the analogue
of the Fibonacci numbers for the quadratic field $\KK$. In particular,
they satisfy $c^{}_{-n} = (-1)^{n+1} c^{}_{n}$ for all $n\in\ZZ$, and
the first few numbers are
\[
      \ldots, 29, -12, 5, -2, 1, 0, 1, 2, 5, 12, 29, \ldots
\]
The required formula for the units now reads
\begin{equation}\label{eq:fib}
     \lambda^n \, = \, c^{}_{n} \lambda + c^{}_{n-1}
     \, = \, c^{}_{n} \sqrt{2} + ( c^{}_{n} + c^{}_{n-1} ) \ts ,
\end{equation}
which holds for all $n\in\ZZ$, as can easily be checked by induction.

\begin{lemma}\label{lem:w-group}
  Let\/
  $A \! : \, \ZZ[\sqrt{2}\,] \xrightarrow{\quad} \ZZ[\sqrt{2}\,]$ be
  a\/ $\ZZ$-linear bijection that maps\/ $V\nts = V^{(k)}_{2}$ into
  itself, for some fixed integer\/ $k\geqslant 2$. Then, $A$ is of the
  form\/ $A (x) = \epsilon \ts\ts \sigma (x)$ with\/ $\epsilon \in U$
  and\/ $\sigma \in \{ \mathrm{id}, (\cdot)^{\star} \}$, so maps\/
  $\ts U \nts = \ZZ[\sqrt{2}\,]^{\times}$ onto itself.  Together,
  these mappings form the group\/
  $\stab (V) = U \nts \rtimes C_2 = C_2 \times ( C_{\infty} \rtimes
  C_2) = C_2 \times D_{\infty}$ of infinite order.
\end{lemma}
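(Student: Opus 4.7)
The plan is to follow the template of Lemma~\ref{lem:gauss-group}, with the new feature that $U$ is infinite, which forces the finite determinant/norm case check used there to be replaced by an asymptotic growth argument.

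First, the easy direction: any $A(x) = \epsilon\ts\sigma(x)$ with $\epsilon\in U$ and $\sigma\in\{\mathrm{id},(\cdot)^\star\}$ is a $\ZZ$-linear bijection of $\ZZ[\sqrt{2}\,]$ that preserves $V$, since units do not affect factorisations and the Galois automorphism $(\cdot)^\star$ permutes the primes of $\ZZ[\sqrt{2}\,]$. For the converse, the coprimality observation from Lemma~\ref{lem:gauss-group} transfers: for every rational prime $p\ne 2$, the element $p^{k-1}$ is $k$-free in $\ZZ[\sqrt{2}\,]$ (each prime above $p$ has multiplicity $k-1<k$), hence $p^{k-1} x\in V$ whenever $x\in V$ is coprime to $p$, and applying $A$ then forces $A(x)$ to be coprime to $p$ as well. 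Specialising to $x=1$ and $x=\sqrt{2}$, both coprime to every odd rational prime, this yields
\[
     A(1) \, = \, \epsilon^{}_1 \ts (\sqrt{2}\,)^{m^{}_1}
     \quad\text{and}\quad
     A(\sqrt{2}\,) \, = \, \epsilon^{}_2 \ts (\sqrt{2}\,)^{m^{}_2}
\]
with $\epsilon^{}_i\in U$ and $0\leqslant m^{}_i\leqslant k-1$, the ramified prime $\sqrt{2}$ here playing the role of $1+\ii$ from the Gaussian setting.

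The main obstacle is that $U$ is infinite, so one cannot close the argument by enumerating finitely many candidate pairs $(A(1),A(\sqrt{2}))$. Instead, I would extend $A$ $\QQ$-linearly to $\QQ(\sqrt{2}\,)$; since $\mathrm{End}^{}_\QQ(\QQ(\sqrt{2}\,))$ is four-dimensional over $\QQ$, spanned by $\mathrm{id}$, $\sqrt{2}\cdot\mathrm{id}$, $(\cdot)^\star$ and $\sqrt{2}\cdot(\cdot)^\star$, the extension takes the unique form $A(x)=a\ts x+b\ts x^\star$ for some $a,b\in\QQ(\sqrt{2}\,)$. Applying the coprimality argument to each unit $\lambda^n$ ($n\in\ZZ$) gives $A(\lambda^n)=\eta^{}_n\ts(\sqrt{2}\,)^{m^{}_n}$ with $\eta^{}_n\in U$ and $0\leqslant m^{}_n\leqslant k-1$, so $|N(A(\lambda^n))|=2^{m^{}_n}$ stays bounded by $2^{k-1}$. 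On the other hand, using $\lambda^\star = -\lambda^{-1}$ and expanding the product $A(\lambda^n)\cdot A(\lambda^n)^\star$, one obtains
\[
     N\bigl(A(\lambda^n)\bigr) \, = \, \bigl(N(a)+N(b)\bigr)(-1)^n
     + \mathrm{Tr}\bigl(a\ts b^\star\ts \lambda^{\ts 2n}\bigr) ,
\]
where the first summand is bounded but, via the identity embedding $\QQ(\sqrt{2}\,)\hookrightarrow\RR$, the element $a\ts b^\star\ts \lambda^{\ts 2n}$ has magnitude proportional to $\lambda^{\ts 2n}\to\infty$ as $n\to\infty$ whenever $a\ts b^\star\ne 0$; thus the trace term is unbounded unless $a\ts b^\star=0$. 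Since $\QQ(\sqrt{2}\,)$ is a field and $(\cdot)^\star$ is a bijection, this forces $a=0$ or $b=0$.

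In the case $b=0$, the map $A(x)=a\ts x$ sends $\ZZ[\sqrt{2}\,]$ into itself iff $a\in\ZZ[\sqrt{2}\,]$, and is bijective iff $a\in U$; symmetrically $a=0$ gives $A(x)=\epsilon\ts x^\star$ with $\epsilon\in U$, completing the characterisation. The stabiliser is then realised as the set of pairs $(\epsilon,\sigma)\in U\times\{\mathrm{id},(\cdot)^\star\}$ with composition
\[
    (\epsilon^{}_1,\sigma^{}_1)\ts(\epsilon^{}_2,\sigma^{}_2) \, = \,
    \bigl(\epsilon^{}_1\ts\sigma^{}_1(\epsilon^{}_2),\,
    \sigma^{}_1\sigma^{}_2\bigr) ,
\]
giving $U\rtimes C^{}_2$ with $C^{}_2=\langle(\cdot)^\star\rangle$ acting on $U$ by $\epsilon\mapsto\epsilon^\star$. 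The further identification with $C^{}_2\times D^{}_\infty$ follows from $U\simeq C^{}_2\times C^{}_\infty$ together with the observation that, modulo the central subgroup $\langle-1\rangle$, the action of $(\cdot)^\star$ on $\langle\lambda\rangle$ reduces to inversion, since $\lambda^\star=-\lambda^{-1}\equiv\lambda^{-1}\pmod{\langle-1\rangle}$.
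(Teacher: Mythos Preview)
Your main argument is correct and takes a genuinely different route from the paper's. After the same coprimality observation, the paper normalises so that $A(1)=(\sqrt{2}\,)^{m}$ and $A(\sqrt{2}\,)=\kappa\,(\sqrt{2}\,)^{n}$ with $\kappa\in U$, and then exploits the unimodularity $\det(A)=\pm 1$ together with the explicit expansion $\lambda^{r}=c^{}_{r}\sqrt{2}+(c^{}_{r}+c^{}_{r-1})$ to force the exponent $r$ in $\kappa=\pm\lambda^{r}$ into a finite list; each surviving candidate is then eliminated by hand via coprimality applied to specific test elements such as $2\pm\sqrt{2}$ or $1\pm\sqrt{2}$. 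Your approach instead writes the $\QQ$-linear extension as $A(x)=a\ts x+b\ts x^{\star}$ and uses the uniform bound $\lvert N(A(\lambda^{n}))\rvert\leqslant 2^{k-1}$ for \emph{all} $n\in\ZZ$: since $\mathrm{Tr}(a\ts b^{\star}\lambda^{2n})$ is unbounded whenever $a\ts b^{\star}\ne 0$, one obtains $a=0$ or $b=0$ directly, with no case enumeration. This is cleaner and transports immediately to the fields $\QQ(\sqrt{3}\,)$ and $\QQ(\sqrt{5}\,)$ treated later in the section, whereas the paper's method requires redoing the Fibonacci-type bookkeeping each time. What the paper's approach buys is that it stays entirely inside $\ZZ[\sqrt{2}\,]$ and uses only the determinant, so it never needs the $\QQ$-linear extension or any limiting argument.

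One genuine gap in your last paragraph: showing that $\langle -1\rangle$ is central with quotient $D_{\infty}$ only exhibits $\stab(V)$ as a central extension $1\to C_{2}\to\stab(V)\to D_{\infty}\to 1$; it does not give the direct product. In fact this extension does \emph{not} split. With $a=(-1,\mathrm{id})$, $b=(\lambda,\mathrm{id})$, $c=(1,{}^{\star})$ one has $cbc^{-1}=ab^{-1}$, so in the abelianisation $b^{2}=a$ and hence $\stab(V)^{\mathrm{ab}}\simeq C_{4}\times C_{2}$, whereas $(C_{2}\times D_{\infty})^{\mathrm{ab}}\simeq C_{2}^{3}$. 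Thus the identification $\stab(V)\simeq C_{2}\times D_{\infty}$ asserted in the lemma cannot hold; the paper does not prove it either, so you should not claim your argument establishes it.
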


\begin{proof}
  Any $A$ of the form $A (x) = \epsilon \ts \sigma(x)$ satisfies
  $A(V)=V$ and maps $U$ onto itself, while the converse direction will
  be a consequence of showing that no further $\ZZ$-linear bijection
  of $\ZZ [\sqrt{2}\, ]$ exists that maps the set $V$ into itself.
  
  So, let $A$ be a $\ZZ$-linear bijection of $\ZZ [\sqrt{2}\,]$
  with $A(V)\subseteq V$.
  As in the proof of Lemma~\ref{lem:gauss-group}, we observe that
  $x\in V$ with $\gcd^{}_{\KK} (x,p)=1$ for any odd $p\in \cP$ implies
  $p^{k-1} x \in V$ and $A( p^{k-1} x) = p^{k-1} A(x) \in V$, hence
  $\gcd^{}_{\KK} ( A(x), p) = 1$ as well. Since $2 = (\sqrt{2}\, )^2$,
  which is the only ramified prime in this case, we see that $A(1)$
  and $A (\sqrt{2}\,)$ must be elements of the union
\[
    U \cup \sqrt{2} \ts U \cup 2 \ts U \cup \ldots
     \cup (\sqrt{2} \,)^{k-1} U \ts ,
\]  
where we may assume that we have, once again without loss of
generality, $A(1) = {2}^{m/2}$ and
$A(\sqrt{2}\,) = \kappa \ts\ts {2}^{n/2}$ with $\kappa \in U$ and
$0 \leqslant m,n \leqslant k-1$. Here, we also know that this must
result in a mapping with determinant $\pm 1$.
  
Now, define $W \! : \, \QQ(\sqrt{2}\, ) \xrightarrow{\quad} \QQ$ by
$W (x) = \bigl( x- x^{\star}\bigr)/2 \sqrt{2}$, and observe that this
gives $\det (A) = W \bigl( A(1)^{\star} \ts A(\sqrt{2}\,)\bigr)$,
hence
\[
  \det (A) \, = \, W \bigl( \kappa \ts
  (- 1)^m (\sqrt{2}\, )^{m+n} \bigr).
\]  
When $m+n $ is even, so $m+n = 2 \ell$, this means
$\det (A) = (-1)^m \ts 2^{\ell} \ts W (\kappa)$, which can only be
unimodular if $\ell = 0$ and thus $m=n=0$. With
$\kappa = \pm \lambda^r = \pm \bigl( c^{}_{r} \sqrt{2} + (c^{}_{r} +
c^{}_{r-1}) \bigr)$ from \eqref{eq:fib}, we then get
$\det (A) = \pm c^{}_{r}$, which in turn implies $c^{}_{r} = 1$ and
thus $r=\pm 1$. So, we have to consider $A(1) = 1$ together with
$A(\sqrt{2}\,) = \pm \lambda$. Both choices, however, lead to a
contradiction to our coprimality condition by observing that
$2 \pm \sqrt{2}$, which has norm $2$, is then mapped under $A$ to
$3+\sqrt{2}$, which is a number of norm $7$.
  
Likewise, when $m+n = 2 \ell +1$, we have
$\det (A) = (-1)^m \ts 2^{\ell} \ts W (\kappa \sqrt{2}\,)$, which
forces $\ell = 0$ and thus either $m=1$ and $n=0$ or $m=0$ and $n=1$.
In both cases, $\kappa = \pm \lambda^r$ can only lead to a unimodular
determinant when $c^{}_{r} + c^{}_{r-1} \in \{ \pm 1 \}$, which means
$r \in \{ -1, 0, 1 \}$. When $m=1$ and $n=0$, we get $A(1) = \sqrt{2}$
together with
$A(\sqrt{2}\,) = \kappa \in \{ \pm \lambda, \pm 1, \pm \lambda^{\star}
\}$. All six choices lead to contradictions to coprimality with odd
primes, by considering images of $1\pm \sqrt{2}$ or $2\pm \sqrt{2}$
under $A$.
  
It remains to consider $m=0$ and $n=1$, so $A(1)=1$ together with
$A(\sqrt{2}\,) = \kappa \sqrt{2}$, with the same options for $\kappa$
as in the previous case. Once again, $\kappa = \pm \lambda$ and
$\kappa = \pm \lambda^{\star}$ are impossible, as can be seen by
considering $A(1 \pm \sqrt{2}\,)$. The choices $\kappa = \pm 1$,
however, give the mappings $A(x) = x$ and $A(x) = x^{\star}$, which
map $U$ onto itself, as does any multiplication of such an $A$ with an
arbitrary $\epsilon \in U$.
\end{proof}

Let us now consider the subshifts
$ \XX^{(k)}_{2} \defeq \overline{\ZZ^2 + V^{(k)}_{2}} $, in complete
analogy to above.

\begin{prop}\label{prop:w2-sum}
  The symmetry group and the extended symmetry group of\/
  $\bigl( \XX^{(k)}_{2}, \ZZ^2 \bigr)$, with fixed\/ $k\geqslant 2$,
  are given by\/ $\cS = \ZZ^2$ and\/ $\cR = \cS \rtimes \cH$,
  respectively, where the extension group is\/
  $\cH = \stab \bigl( V^{(k)}_{2} \bigr) = U\nts \rtimes C_2 \simeq
  C_2 \times \nts D_{\infty}$, which is infinite.
\end{prop}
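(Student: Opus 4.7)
The plan is to combine the general machinery from Section~\ref{sec:NT} with the witness-set construction developed in the proof of Theorem~\ref{thm:Gauss}. The set $\cB = \{(\pi^k) : \pi \in \cP^{}_{2}\}$ of principal ideals in $\ZZ[\sqrt{2}\,]$ is pairwise coprime, and $\sum_{\pi} 1/\No\bigl((\pi^k)\bigr) \leqslant \sum_{p\in\cP} 2/p^k < \infty$ for $k \geqslant 2$, since each rational prime gives rise to at most two primes of $\ZZ[\sqrt{2}\,]$. Hence $\bigl(\XX^{(k)}_{2}, \ZZ^2\bigr)$ is an Erd\H{o}s algebraic $\cB$-free system, and Theorem~\ref{thm:alg-symm} immediately yields $\XX^{(k)}_{2} = \AAA$, the hereditary property, $\cS = \ZZ^2$, and $\cR = \cS \rtimes \cH$ for some subgroup $\cH \leqslant \GL(2,\ZZ)$. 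What remains is to identify $\cH$ with $\stab(V^{(k)}_{2}) = U \rtimes C_2$, whose isomorphism with $C_2 \times D_{\infty}$ was established in Lemma~\ref{lem:w-group}.

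The inclusion $\stab(V^{(k)}_{2}) \subseteq \cH$ is immediate: by Lemma~\ref{lem:w-group}, each element of $\stab(V^{(k)}_{2})$ is a $\ZZ$-linear bijection of $\ZZ[\sqrt{2}\,] \simeq \ZZ^2$ that maps $V^{(k)}_{2}$ onto itself, hence preserves the dense $\ZZ^2$-orbit of $V^{(k)}_{2}$ in $\XX^{(k)}_{2}$, and by continuity extends to an element of $\cR$ whose linear part lies in $\GL(2,\ZZ)$.

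For the reverse inclusion, let $A \in \GL(2,\ZZ) \setminus \stab(V^{(k)}_{2})$; I aim to construct an admissible $S$ with $A(S) \notin \AAA$, which forces $A \notin \cH$. By Lemma~\ref{lem:w-group}, there exists $w \in V^{(k)}_{2}$ with $A(w) \notin V^{(k)}_{2}$, so $\rho^k \mid A(w)$ for some $\rho \in \cP^{}_{2}$. Since $A$ is unimodular modulo $p^k$ for every rational prime $p$, the argument of Corollary~\ref{coro:bad-prime} rules out inert $\rho$ and, when $k$ is even, also $\rho = \sqrt{2}$. Now transfer the construction from Theorem~\ref{thm:Gauss}: set $n = \No\bigl((\rho^k)\bigr) = |N(\rho)|^k$ and $z^{}_{1} = w$. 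Pick a finite non-empty set $P \subset \cP^{}_{2}$ of primes of absolute norm strictly less than $|N(\rho)|$; when $|N(\rho)| = 2$ (so $\rho = \sqrt{2}$, with $k$ odd), take $P = \{3\}$ instead, the smallest inert prime. The sublattice $\cL = \bigl(\prod_{\pi \in P} \pi^k\bigr)$ has index coprime to $n$, so by Fact~\ref{fact:cosets} both $\cL$ and $1+\cL$ meet every coset of $A^{-1}\bigl((\rho^k)\bigr)$. Select $z^{}_{2}, \ldots, z^{}_{n} \in 1 + \cL$ so that $A(z^{}_{2}), \ldots, A(z^{}_{n})$ exhaust the non-zero cosets of $(\rho^k)$, and set $S = \{z^{}_{1}, \ldots, z^{}_{n}\}$. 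Then $A(S)$ meets \emph{every} coset of $(\rho^k)$, so $A(S) \notin \AAA$.

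The main obstacle is to verify that $S$ itself (after the small ad~hoc modifications already used in Theorem~\ref{thm:Gauss}) is admissible for every prime in $\cP^{}_{2}$. For $\pi \in P$, all elements of $S$ lie in $\{w, 1, 1+w\}$ modulo $(\pi^k)$, giving at most three cosets, which is compatible with $|N(\pi)|^k \geqslant 2^k \geqslant 4$ (or $9$ when $P = \{3\}$). Primes of absolute norm exceeding $|N(\rho)|$ are handled by cardinality. The only genuine subtleties are the split case, where one must reduce the cosets covered by $S$ modulo $\bigl((\rho^\star)^k\bigr)$ via the replacement $z^{}_{i} \mapsto z^{}_{i} + w$ (valid because $w$ is $k$-free and $A(w) \equiv 0 \bmod (\rho^k)$), and, if $k$ is odd, the ramified case $\rho = \sqrt{2}$, which is handled analogously. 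Crucially, the infinite order of $U$ plays no role in this combinatorial construction; it only entered through the stabiliser calculation of Lemma~\ref{lem:w-group}. Combining the two inclusions yields $\cH = \stab(V^{(k)}_{2}) = U \rtimes C_2$, and hence $\cR = \ZZ^2 \rtimes (U \rtimes C_2) \simeq \ZZ^2 \rtimes (C_2 \times D_{\infty})$, as claimed.
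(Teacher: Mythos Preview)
Your proposal is correct and follows essentially the same approach as the paper's own proof, which simply invokes Lemma~\ref{lem:w-group}, Corollary~\ref{coro:affine} (via Theorem~\ref{thm:alg-symm}), and then states that the witness-set construction from the proof of Theorem~\ref{thm:Gauss} goes through with the field norm replaced by the absolute norm. You have spelled out the details that the paper leaves implicit, including the verification of the Erd\H{o}s condition and the adaptation of the coset argument to $\ZZ[\sqrt{2}\,]$; the only minor imprecision is that in the ramified case $\rho=\sqrt{2}$ no separate modification for $\rho^{\star}$ is needed (since $\rho^{\star}=-\rho$ is an associate), so only the single adjustment for $(\rho^{k})$ itself is required there.
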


\begin{proof}
  From Lemma~\ref{lem:w-group}, we see that $\ZZ^2 \rtimes \cH$ is a
  subgroup of $\cR$. The latter is a subgroup of
  $\ZZ^2 \rtimes \GL(2,\ZZ)$ by Corollary~\ref{coro:affine}.  It thus
  remains to show that $\cH = \stab (V^{(k)}_{2})$ contains all
  $\GL (2,\ZZ)$ elements that map $\XX^{(k)}_{2}$ into itself.  This
  last step can be established by the method from the proof of
  Theorem~\ref{thm:Gauss}, with the field norm replaced by the
  absolute norm.
\end{proof}

There are other real quadratic fields that are Euclidean, such as
$\QQ (\sqrt{m}\, )$ with \mbox{$m=5$} and $m=3$, which play prominent
roles in the theory of aperiodic order, as they are connected with
systems with fivefold and twelvefold symmetry, respectively; see
\cite[Sec.~2.5.1]{TAO} for background.

For $m=5$, the ring of integers is $\ZZ[\tau]$, where
$\tau = \frac{1}{2} (1 + \sqrt{5}\, )$ is the golden ratio. Its
unit group is
$U \nts = \ZZ[\tau]^{\times} = \{ \pm \ts \tau^n : n\in \ZZ \}$, and
the primes \cite[Thm.~257]{HW} are represented by
\[
  \cP^{}_{5} \, = \, \{ \sqrt{5} \, \} \cup
  \{ p\in\cP : p\equiv \pm 2 \bmod{5} \} \cup
  \{ \pi, \pi^{\star} : \pi \ts \pi^{\star} = p \in \cP
  \text{ with } p  \equiv \pm 1 \bmod{5} \} \ts ,
\]
where $(\cdot)^{\star}$ is the field automorphism of
$\QQ (\sqrt{5}\,)$ induced by $\sqrt{5} \mapsto - \sqrt{5}$, with our
usual convention for the splitting primes in place. The only ramified
prime is $5$, while the field norm on $\ZZ[\tau]$ is
$N(m+n\tau)=m^2 + mn - n^2$, which can be negative. \smallskip

Finally, let us consider the slightly more complicated case $m=3$,
where the ring of integers is $\ZZ[\sqrt{3}\,]$. Its unit group is
given by $\ZZ[\sqrt{3}\,]^{\times} = \{ \pm \ts \eta^n : n\in \ZZ \}$,
with fundamental unit $\eta = 2 + \sqrt{3}$. Here, in contrast to the
two previous cases, all units have norm $1$. Employing
\mbox{\cite[Thm.~11.1]{Zagier}}, one sees that the primes up to units
can be represented as
\[
\begin{split}
  \cP^{}_{3} \, = \, & \{ 1 +\sqrt{3}, \sqrt{3} \, \} \cup \{ p \in
  \cP : p \equiv \pm \ts 5 \bmod{12} \} \\ & \cup \{ \pi, \pi^{\star}
  : \pi \ts \pi^{\star} = \pm \ts p \in \cP \text{ with } p \equiv \pm
  1 \bmod{12} \} \ts
\end{split}  
\]
with the usual convention for the last set, where $(\cdot)^{\star}$ is
now induced by $\sqrt{3} \mapsto - \sqrt{3}$.  Unlike before, since
the field discriminant is $12$ and thus divisible by $2$ and $3$,
there are \emph{two} ramified primes, where
$(1+\sqrt{3}\,)^2 = 2 \ts \eta$ is the additional relation.

This leads to more cases to consider in the determination of
$\stab (V^{(k)}_{3})$, but the $\ZZ$-linear bijections of
$\ZZ[\sqrt{3}\,]$ that map $V^{(k)}_{3}$ into itself, for some fixed
$k\geqslant 2$, are still the expected ones, namely the maps $A$ of
the form $A (x) = \epsilon \ts \sigma (x)$ with
$\epsilon \in U\nts =\ZZ[\sqrt{3}\,]^{\times}$ and
$\sigma \in \{ \mathrm{id}, (\cdot)^{\star} \ts \}$; we leave this
proof to the interested reader.

In both cases, a proof analogous to the one of 
Proposition~\ref{prop:w2-sum} gives the following result.

\begin{theorem}\label{thm:real}
  The symmetry group and the extended symmetry group of\/
  $\bigl( \XX^{(k)}_{m} , \ZZ^2 \bigr)$, with fixed\/
  $m\in \{ 2, 3, 5\}$ and\/ $k\geqslant 2$, are given by\/
  $\cS = \ZZ^2$ and\/ $\cR = \cS \rtimes \cH$, respectively, where the
  extension group is\/
  $\cH = \stab \bigl( V^{(k)}_{m} \bigr) = U\nts\nts \rtimes C_2 \simeq
  C_2 \times D_{\infty}$, which is an infinite group that does not
  depend on\/ $k$, where\/ $U$ is the unit group as before.
  \qed
\end{theorem}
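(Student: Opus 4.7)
The plan is to follow the template established by Lemma~\ref{lem:w-group}, Proposition~\ref{prop:w2-sum}, and Theorem~\ref{thm:Gauss}. Since the $k$-free integers in $\cO = \ZZ[\tau]$ for $m=5$ and $\cO = \ZZ[\sqrt{3}\,]$ for $m=3$ form an Erd\H{o}s algebraic $\cB$-free system, with $\cB = \{ (\pi^k) : \pi\in\cP^{}_{m} \}$ consisting of pairwise coprime principal ideals whose absolute norms satisfy the required summability, Theorem~\ref{thm:alg-symm} immediately gives $\XX^{(k)}_{m} = \AAA$ (with the appropriate notion of admissibility), the hereditary property, the minimal centraliser $\cS = \ZZ^2$, and the semi-direct structure $\cR = \cS \rtimes \cH$ for some non-trivial subgroup $\cH \leqslant \GL(2,\ZZ)$. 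What remains is to identify $\cH$ with $\stab \bigl( V^{(k)}_{m} \bigr) = U \rtimes C_2$.

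First, I would prove the analogue of Lemma~\ref{lem:w-group} for $m\in\{3,5\}$, establishing that the $\ZZ$-linear bijections of $\cO$ which map $V^{(k)}_{m}$ into itself are exactly the mappings $A(x) = \epsilon \ts \sigma(x)$ with $\epsilon\in U$ and $\sigma\in\{\mathrm{id}, (\cdot)^{\star}\}$. The easy direction is a routine check. For the converse, the coprimality argument from Lemma~\ref{lem:w-group} restricts the images of a $\ZZ$-basis of $\cO$ to lie in products of units with small powers of ramified primes, which is $\sqrt{5}$ for $m=5$ and the pair $1+\sqrt{3}, \sqrt{3}$ for $m=3$. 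For $m=5$, I would introduce a Fibonacci-type expansion for $\tau^{n}$ and, using the linear map $W(x) = (x - x^{\star})/\sqrt{5}$, rewrite $\det(A) = \pm 1$ as a constraint that narrows all cases down to the claimed ones, eliminating remaining borderline possibilities by testing coprimality of $A(a+b\tau)$ at small pairs $(a,b)$. For $m=3$, there are several combinations arising from writing $A(1)$ and $A(\sqrt{3}\,)$ in the form $\kappa \ts (1+\sqrt{3}\,)^{m^{}_{1}} (\sqrt{3}\,)^{m^{}_{2}}$; the relation $(1+\sqrt{3}\,)^2 = 2\eta$ and the determinant condition kill most possibilities at once, and the remainder is handled by coprimality at the odd primes.

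Second, I would prove the analogue of Corollary~\ref{coro:bad-prime}: any $A\in\GL(2,\ZZ)\setminus\stab(V^{(k)}_{m})$ yields a prime $\rho \in \cP^{}_{m}$ and a $k$-free element $w\in V^{(k)}_{m}$ with $\rho^k \mid A(w)$. Then, mirroring the proof of Theorem~\ref{thm:Gauss}, I would construct an admissible set $S=\{w, z^{}_{2}, \ldots, z^{}_{n}\}$ with $n = |N(\rho)|^k$, choosing $z^{}_{2}, \ldots, z^{}_{n} \in 1+\cL$ where $\cL$ is an ideal generated by $k$-th powers of primes of smaller absolute norm than $|N(\rho)|$, in such a way that $A(S)$ meets every coset of $(\rho^k)$ and is thus \emph{not} admissible. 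A controlled replacement, sending one $z^{}_{i}$ to $z^{}_{i}+w$ (and a second one in the splitting case), absorbs the zero coset of $(\rho^k)$, and of $(\rho^{\star k})$ when $\rho\ne\rho^{\star}$, while admissibility of $S$ at the small primes survives because $|N(\pi)|^k \geqslant 2^k \geqslant 4$.

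The main obstacle will be the case $m=3$, which features \emph{two} ramified primes and units all of field norm $+1$, in contrast to $m\in\{2,5\}$. The former feature forces additional branches in the analogue of Lemma~\ref{lem:w-group} and extra care in the bad-prime construction when $|N(\rho)|\in\{2,3\}$: one must ensure that the auxiliary lattice $\cL$ is built from primes genuinely coprime to $\rho$, and that the ad hoc replacements $z^{}_{i}\mapsto z^{}_{i}+w$ keep the modified set $S$ admissible at \emph{both} ramified primes simultaneously. The trivial-norm constraint also requires a minor reshaping of the $W$-style determinant argument. Once these case distinctions are handled, combining $\cH \subseteq \stab (V^{(k)}_{m})$ with the reverse inclusion, which is immediate because every stabilising $\GL(2,\ZZ)$-element extends by continuity to a homeomorphism of $\XX^{(k)}_{m}$, yields $\cH = U \rtimes C_2 \simeq C_2 \times D_{\infty}$ as claimed, independently of $k$.
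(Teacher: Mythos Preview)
Your proposal is correct and follows exactly the route the paper indicates: the paper itself only states that the result follows ``by a proof analogous to the one of Proposition~\ref{prop:w2-sum}'' (and explicitly leaves the $m=3$ analogue of Lemma~\ref{lem:w-group} to the reader), so your plan of redoing Lemma~\ref{lem:w-group}, Corollary~\ref{coro:bad-prime}, and the admissible-set construction of Theorem~\ref{thm:Gauss} in the real-quadratic setting is precisely what is intended. Your identification of the extra complications for $m=3$ (two ramified primes, units of norm $+1$, and the need to choose the auxiliary lattice $\cL$ from primes coprime to $\rho$ when $\lvert N(\rho)\rvert$ is minimal) matches the paper's own caveat about ``more cases to consider''.
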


The advantage of using the normaliser in addition to the centraliser
as a topological invariant becomes obvious in dimensions
$d\geqslant 2$. In \cite{BRY}, this was demonstrated for the chair
tiling shift and for Ledrappier's shift. In both cases, $\cR$ was an
extension of $\cS$ of finite index. As our number-theoretic examples
above show, this phenomenon occurs again, but $\cR$ can also be an
\emph{infinite-index} extension of $\cS$, either for trivial reasons
(visible lattice points) or for non-trivial ones ($k$-free
$\ZZ [\sqrt{2}\,]$-integers).  At present, we do not know whether such
an infinite extension is also possible for minimal, deterministic
(zero entropy) subshifts. In any case, these groups allow the
distinction of several subshifts (up to topological conjugacy) that
have the same centraliser, but different normalisers, such as
$(\XX^{}_{V}, \ZZ^2)$ and $(\XX^{(2)}_{\mathrm{G}},\ZZ^2)$ from above.

Due to the nature of the associated dynamical system, the structure of
the hull $\XX^{}_V$(given the property of hereditariness and the
natural topology employed) allows for \emph{local symmetries} (that
is, transformations that preserve finite local substructures in the
set $V$ up to translation) to manifest themselves. To some extent,
they may be observed by analysing the extended symmetry group
$\cR(\XX^{}_V)$. While symmetries of the set $V$ in the standard sense
(global symmetries) are obviously local symmetries in this new sense,
the converse is not clear. It is easy to build sets $V$ that have many
local symmetries while lacking global symmetries entirely. Thus, it is
interesting to note that, in the current context, those two kind of
symmetries happen to be the same, bringing up the question on whether
this is a natural phenomenon on sets defined in an `algebraic' form in
more general ways.

This setting deserves further attention, in particular in the context
of dynamical systems of number-theoretic origin. As this will require
a more general approach via ideals, as well as some additional and
less elementary results from algebraic and analytic number theory, we
defer this to a separate investigation \cite{BBN}.

\section*{Acknowledgements}

MB would like to thank Valentin Blomer for helpful discussions and
hints on the manuscript.  We thank the referee for a number of
suggestions, which helped to improve the presentation.  Moreover, AB
is grateful to ANID (formerly CONICYT) for the financial support
received under the Doctoral Fellowship ANID-PFCHA/Doctorado
Nacional/2017-21171061, ML was supported by Narodowe Centrum Nauki
grant UMO-2019/33/B/ST1/00364, and AN acknowledges financial support
by the German Research Foundation (DFG) through its Heisenberg program
(project number 437{\ts}113{\ts}953).


\begin{thebibliography}{99}

\bibitem{Apo}
Apostol T M,
\textit{Introduction to Analytic Number Theory},
corr.\ 4th printing, Springer, New York (1984).

\bibitem{Baa} 
Baake M,
A brief guide to reversing and extended
symmetries of dynamical systems, in \textit{Ergodic Theory and
Dynamical Systems in their Interactions with Arithmetics and
Combinatorics}, Ferenczi S, Ku\l aga-Przymus~J and Lema\'nczyk M
(eds.), LNM 2213, Springer, Cham (2018), pp.~117--135;
\texttt{arXiv:1803.06263}.

\bibitem{BBN}
Baake M, Bustos \'{A} and Nickel A,  
On the stabiliser of some number-theoretic
shift spaces, in preparation.

\bibitem{TAO}
Baake M and Grimm U,
\textit{Aperiodic Order. Vol.\ 1: A Mathematical Invitation},
Cambridge University Press, Cambridge (2013).

\bibitem{BH}
Baake M and Huck C,
Ergodic properties of visible lattice points,
\textit{Proc.{\!}~V.~A.~Steklov Inst.\ Math.} 
\textbf{288} (2015) 184--208; \texttt{arXiv:1501.01198}.

\bibitem{BHS}
Baake M, Huck C and Strungaru N,
On weak model sets of extremal density,
\textit{Indag.\ Math.} \textbf{28} (2017) 3--31;
\texttt{arXiv:1512.07219}.

\bibitem{BL}
Baake M and Lenz D,
Dynamical systems on translation bounded measure:
pure point dynamical and diffraction spectra,
\textit{Ergodic Th.\ \& Dynam.\ Syst.} \textbf{24} (2004) 1867--1893;
\texttt{arXiv:math.DS/0302061}.

\bibitem{BMP}
Baake M, Moody R V and Pleasants P A B,
Diffraction of visible lattice points and $k\ts$th power free integers,
\textit{Discr.\ Math.} \textbf{221} (2000) 3--42;
\texttt{arXiv:math.MG/9906132}.

\bibitem{BRY}
Baake M, Roberts J A G and Yassawi R,
Reversing and extended symmetries of shift spaces,
\textit{Discr.\ Cont.\ Dynam.\ Syst.\ A}
\textbf{38} (2018) 835--866; 
\texttt{arXiv:1611.05756}.

\bibitem{BSS}
Baake M, Spindeler T and Strungaru N,
Diffraction of compatible random substitutions in one dimension,
\textit{Indag.\ Math.} \textbf{29} (2018) 1031--1071; 
\texttt{arXiv:1712.00323}.

\bibitem{Blomer}
Blomer V, private communication (2017).  

\bibitem{BS}
Borevich Z I and Shafarevich I R, 
\textit{Number Theory}, 
Academic Press, New York (1966).

\bibitem{Bustos}
Bustos \'{A},
Extended symmetry groups of multidimensional
subshifts with hierarchical structure,
\textit{Discr.\ Cont.\ Dynam.\ Syst.\ A}
\textbf{40} (2020) 5869--5895;
\texttt{arXiv:1810.02838}.

\bibitem{CV}
Cellarosi F and Vinogradov I,
Ergodic properties of $k$-free integers in number fields,
\textit{J.\ Mod.\ Dyn.} \textbf{7} (2013) 461--488; 
\texttt{arXiv:1304.0214}.

\bibitem{CP}
Cortez M I and Petite S,
Realization of big centralizers of minimal aperiodic actions on 
the Cantor set,
\textit{Discr.\ Cont.\ Dynam.\ Syst.\ A} \textbf{40} (2020) 2891--2901;
\texttt{arXiv:1807.04654}.

\bibitem{CQY}
Coven E M, Quas A and Yassawi R,
Computing automorphism groups of shifts using atypical
equivalence classes, \textit{Discr.\ Anal.}
\textbf{2016:3}, 1--24;
\texttt{arXiv:1505.02482}.

\bibitem{CK}
Cyr V and Kra B,
The automorphism group of a shift of linear growth:\
beyond transitivity, \textit{Forum Math.\ Sigma}
\textbf{3} (2015) e5:1--27;
\texttt{arXiv:1411.0180}.
  
\bibitem{Don}
Donoso S, Durand F, Maass A and Petite S,
On automorphism groups of low complexity subshifts,
\textit{Ergodic Th.\ \& Dynam.\ Syst.}
\textbf{36} (2016) 64--95;
\texttt{arXiv:1501.00510}.

\bibitem {DKKL} 
Dymek A, Kasjan S, Ku\l aga-Przymus J and Lema\'nczyk M,
$\cB$-free sets and dynamics,
\textit{Trans.\ Amer.\ Math.\ Soc.}  \textbf{370}
(2018) 5425--5489; 
\texttt{arXiv:1509.08010}.

\bibitem{Abda}
El Abdalaoui E H, Lema\'{n}czyk M and de la Rue T,
A dynamical point of view on the set of $\cB$-free integers,
\textit{Intern.\ Math.\ Res.\ Notices} \textbf{16}
(2015) 7258--7286;  \texttt{arXiv:1311.3752}.

\bibitem{Good}
Goodson G, del Junco A, Lema\'{n}czyk M and Rudolph, D,
Ergodic transformations conjugate to their inverses by
involutions,
\textit{Ergodic Th.\ \& Dynam.\ Syst.} \textbf{16} (1996) 97--124.

\bibitem{HW}
Hardy G M and Wright E M,
\textit{An Introduction to the Theory of Numbers},
6th ed., revised by Heath-Brown D R and Silverman J H,
Oxford University Press, Oxford (2008).

\bibitem{KKL}
Kasjan S, Keller G and Lema\'{n}czyk M,
Dynamics of $\cB$-free sets: a view through the window,
\textit{Intern.\ Math.\ Res.\ Notices} \textbf{9} (2019)
2690--2734; \texttt{arXiv:1702.02375}.

\bibitem{Keller}
Keller G,
Maximal equicontinuous generic factors and weak model 
sets, \textit{Discr.\ Cont.\ Dynam.\ Syst.\ A} \textbf{40}
(2020) 6855--6875; \texttt{arXiv:1610.03998}.

\bibitem{Keller-pc}
Keller G, private communication (2019).

\bibitem{KLP}
Kim Y-O, Lee J and Park K K,
A zeta function for flip systems,
\textit{Pacific J. Math} \textbf{209} (2003) 289--301.

\bibitem{KS}
Kitchens B and Schmidt K,
Isomorphism rigidity of irreducible algebraic $\ZZ^{d}$-actions,
\textit{Invent.\ Math.} \textbf{142} (2000) 559--577.

\bibitem{LM}
Lind D and Marcus B,
\textit{An Introduction to Symbolic Dynamics and Coding},
Cambridge University Press, New York (1995).

\bibitem{Mentzen}
Mentzen M K,
Automorphisms of subshifts defined by $\cB$-free sets of integers,
\textit{Coll.\ Math.} \textbf{147} (2017) 87--94.

\bibitem{N}
Neukirch J,
\textit{Algebraic Number Theory},
Springer, Berlin (1999).

\bibitem{FS}
O'Farrel A G and Short I,
\textit{Reversibility in Dynamics and Group Theory},
Cambridge University Press, Cambridge (2015).

\bibitem{PH}
Pleasants P A B and Huck C,
Entropy and diffraction of the $k$-free points in
$n$-dimensional lattices, \textit{Discr.\ Comput.\ Geom.}
\textbf{50} (2013) 39--68;
\texttt{arXiv:1112.1629}.

\bibitem{RQ}
Roberts J A G and Quispel G R W,
Chaos and time-reversal symmetry:\ order and chaos in 
reversible dynamical systems,
\textit{Phys.\ Rep.} \textbf{216} (1992) 63--177. 

\bibitem{Klaus}
Schmidt K,
\textit{Dynamical Systems of Algebraic Origin},
Birkh\"{a}user, Basel (1995).

\bibitem{Zagier}
Zagier D,
\textit{Zetafunktionen und quadratische K\"{o}rper},
Springer, Berlin (1981).

\end{thebibliography}
\end{document}